\documentclass[a4paper, 12pt]{article}







\usepackage[latin1]{inputenc}
\usepackage[T1]{fontenc}

\usepackage[dvips]{graphicx}
\usepackage{color}
\usepackage{amsthm}  
\usepackage{amsmath} 
\usepackage{amssymb} 
\usepackage{latexsym} 

\theoremstyle{plain}

\newtheorem{proposition}{Proposition}

\newtheorem{theorem}{Theorem}
\newtheorem{corollary}{Corollary}

\newtheorem{lemma}{Lemma}

\newtheorem{conjecture}{Conjecture}

\newcommand{\mc}[1]{\mathcal{ #1 }}
\newcommand{\tn}[1]{\textnormal{#1}}

\usepackage[numbers]{natbib}
\usepackage{subfigure}

\title{Double-critical graphs and complete minors}

\author{ \small Ken-ichi Kawarabayashi (\texttt{k\_keniti@nii.ac.jp}) \\
   \small The National Institute of Informatics \\
   \small 2-1-2 Hitotsubashi, Chiyoda-ku \\
   \small Tokyo 101-8430, Japan \\
   \vspace{5mm} \and
   \small Anders Sune Pedersen (\texttt{asp@imada.sdu.dk}) \\
   \small Bjarne Toft (\texttt{btoft@imada.sdu.dk}) \\
   \small Dept. of Mathematics \& Computer Science \\
   \small University of Southern Denmark \\
  \small Campusvej 55, 5230 Odense M, Denmark \vspace{5mm} \\
  \small MR Subject Classification: 05C15, 05C69}

\begin{document}

\maketitle
\begin{abstract}
  A connected $k$-chromatic graph $G$ is double-critical if for all
  edges $uv$ of $G$ the graph $G - u - v$ is $(k-2)$-colourable. The
  only known double-critical $k$-chromatic graph is the complete
  $k$-graph $K_k$. The conjecture that there are no other
  double-critical graphs is a special case of a conjecture from 1966,
  due to Erd\H{o}s and Lov\'asz.  The conjecture has been verified for
  $k \leq 5$.  We prove for $k=6$ and $k=7$ that any non-complete
  double-critical $k$-chromatic graph is $6$-connected and has $K_k$
  as a minor.
\end{abstract}

\section{Introduction}
A long-standing conjecture, due to Erd\H{o}s and
Lov\'asz~\cite{TihanyProblem2}, states that the complete graphs are
the only double-critical graphs. We refer to this conjecture as the
\emph{Double-Critical Graph Conjecture}. A more elaborate statement of
the conjecture is given in Section~\ref{sec:notation}, where several
other fundamental concepts used in the present paper are defined. The
Double-Critical Graph Conjecture is easily seen to be true for
double-critical $k$-chromatic graphs with $k \leq 4$.
Mozhan~\cite{0688.05026} and Stiebitz~\cite{MR1221590, MR882614}
independently proved the conjecture for $k = 5$, while it remains open
for $k \geq 6$. The Double-Critical Graph Conjecture is a special case
of a more general conjecture, the so-called Erd\H{o}s-Lov\'asz Tihany
Conjecture~\cite{TihanyProblem2}, which states that for any graph $G$
with $\chi (G) > \omega (G)$ and any two integers $a, b \geq 2$ with
$a+b = \chi (G) + 1$, there is a partition $(A,B)$ of the vertex set
$V(G)$ such that $\chi( G[A] ) \geq a$ and $\chi( G[B] ) \geq b$. The
Erd\H{o}s-Lov\'asz Tihany Conjecture is settled in the affirmative for
every pair $(a,b) \in \{ (2,2), (2,3), (2,4), (3,3), (3,4), (3,5) \}$
(see \cite{MR0242718, 0688.05026, MR1221590, MR882614}). Kostochka and
Stiebitz~\cite{KostochkaStiebitz2008} proved it to be true for line
graphs of multigraphs, while Balogh et al.~\cite{Balogh08} proved it
to be true for quasi-line graphs and for graphs with independence
number $2$.

In addition, Stiebitz (private communication) has proved a weakening
of the Erd\H{o}s-Lov\'asz Tihany conjecture, namely that for any graph
$G$ with $\chi (G) > \omega (G)$ and any two integers $a,b \geq 2$
with $a + b = \chi (G) + 1$, there are two disjoint subsets $A$ and
$B$ of the vertex set $V(G)$ such that $\delta ( G[A] ) \geq a - 1$
and $\delta ( G[B] ) \geq b-1$. (Note that for this conclusion to hold
it is not enough to assume that $G \nsubseteq K_{a+b-1}$ and $\delta
(G) \geq a+b-2$, that is, the Erd\H{o}s-Lov\'asz Tihany conjecture
does not hold in general for the so-called colouring number. The
$6$-cycle with all shortest diagonals added is a counterexample with
$a=2$ and $b=4$.) For $a=2$, the truth of this weaker version of the
Erd\H{o}s-Lov\'asz Tihany conjecture follows easily from
Theorem~\ref{th:noAdjacentLowVertices} of the present paper.

Given the difficulty in settling the Double-Critical Graph Conjecture
we pose the following weaker conjecture:
\begin{conjecture}
  Every double-critical $k$-chromatic graph is contractible
  to the complete $k$-graph.
\label{conj:relaxedDC}
\end{conjecture}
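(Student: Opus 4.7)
I would approach the conjecture by a case distinction in $k$. For $k \leq 5$ the conclusion is immediate, since the Double-Critical Graph Conjecture is already settled in these cases and forces $G = K_k$, which trivially contains $K_k$ as a minor. For $k \geq 6$ my aim would be to follow a two-stage programme: first establish strong connectivity properties of any double-critical $k$-chromatic graph $G$, and then use this connectivity, together with the large minimum degree forced by criticality ($\delta(G) \geq k-1$), to exhibit $K_k$ as a minor.

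The key structural tool to develop first is the following consequence of double-criticality. Fix an edge $uv$, take any $(k-2)$-colouring of $G - u - v$, and observe that in any such colouring every colour class must contain both a neighbour of $u$ and a neighbour of $v$ --- otherwise the colouring would extend to a $(k-1)$-colouring of $G$. This forces $|N(u) \cap N(v)| \geq k-2$ and, more generally, imposes a rich local structure around each edge. These common neighbourhoods are the raw material from which branch sets for a $K_k$ minor can be built.

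The connectivity step would proceed by contradiction: if $S \subseteq V(G)$ is a separator with $|S|$ small, one decomposes $G = G_1 \cup G_2$ with $V(G_1) \cap V(G_2) = S$, produces compatible colourings of $G_1$ and $G_2$ using the criticality and double-criticality of $G$, and recombines them to contradict $\chi(G) = k$. For $k = 6, 7$ this is intended to yield $6$-connectivity, as announced in the abstract; for larger $k$ one would hope for connectivity growing with $k$. With connectivity in hand, the minor can be extracted using structural theorems: J\o{}rgensen's theorem for $K_6$-minor-free graphs when $k = 6$, the Kawarabayashi--Norine--Thomas--Wollan structural result for $K_7$-minor-free graphs when $k = 7$, and for general large $k$ quantitative results such as Kostochka's and Thomason's bounds on the minimum degree forcing a $K_k$ minor, hopefully refined through the extra density implied by double-criticality.

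The principal obstacle is that this conjecture implies Hadwiger's Conjecture restricted to double-critical graphs, and Hadwiger's Conjecture itself is wide open for $k \geq 7$. Thus any approach for general $k$ must genuinely exploit double-criticality beyond mere chromatic number, and the conversion of the local information about the sets $N(u) \cap N(v)$ into a globally consistent system of connected branch sets is the step where new ideas are most needed --- connectivity plus minimum degree alone are known to be insufficient to force $K_k$ minors for $k \geq 7$. Realistically, I expect this programme to succeed case by case for small $k$ (matching the authors' treatment of $k = 6, 7$), while a uniform proof for all $k$ will require a substantial new structural insight into how double-criticality globally constrains $G$.
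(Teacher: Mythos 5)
The statement you were asked to address is, of course, itself a conjecture in the paper, and it remains open for $k \geq 8$; the paper only establishes it for $k \leq 7$. Your proposal correctly recognizes this and honestly confines itself to a programme rather than a proof, so there is no question of a ``gap'' in the usual sense. What is worth noting, however, is that your proposed route for $k = 6, 7$ differs from the one actually taken in the paper. You suggest deducing the $K_6$- or $K_7$-minor from $6$-connectivity together with structure theorems for $K_6$- and $K_7$-minor-free graphs (J{\o}rgensen; Kawarabayashi--Norine--Thomas--Wollan). The paper instead uses the extremal edge-density thresholds of Gy{\H{o}}ri and Mader: once $\delta(G) \geq 8$ (resp.\ $\geq 10$) the bound $m(G) > 4n(G) - 9$ (resp.\ $> 5n(G) - 14$) already forces the minor with no structure theory at all, and the remaining low-degree cases $\delta(G) \in \{7\}$ (resp.\ $\{8,9\}$) are handled by a detailed analysis of the neighbourhood graph $G_x$ of a minimum-degree vertex, using the fact (Proposition~\ref{prop:structureOfGx}) that $\overline{G_x}$ decomposes into isolated vertices and long cycles; the $6$-connectivity of Theorem~\ref{th:doubleCriticalImpliesSixConnected} enters only at a single point (Lemma~\ref{lem:F}) to route a path outside $N[x]$. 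The paper's approach is more elementary and explicitly avoids heavy machinery (it also avoids the Four Colour Theorem), at the cost of lengthy case analysis; your approach would be shorter if the cited structure theorems could be invoked cleanly, but is harder to push through because those theorems do not immediately convert $6$-connectivity plus $\delta \geq k+1$ into a $K_k$-minor. For general $k$, your suggestion to use Kostochka/Thomason-type degree thresholds would require $\delta(G)$ to grow like $k\sqrt{\log k}$, whereas the paper only establishes $\delta(G) \geq k+1$, so that route would need a substantially stronger degree bound for double-critical graphs than is currently known.
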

Conjecture~\ref{conj:relaxedDC} is a weaker version of Hadwiger's
Conjecture~\cite{MR0012237}, which states that every $k$-chromatic
graph is contractible to the complete $k$-graph.  Hadwiger's
Conjecture is one of the most fundamental conjectures of Graph Theory,
much effort has gone into settling it, but it remains open for $k \geq
7$. For more information on Hadwiger's Conjecture and related problems
we refer the reader to~\cite{JensenToft95, MR1411244}.

In this paper we mainly devote attention to the
\emph{double-critical} $7$-chromatic graphs. It seems that relatively
little is known about $7$-chromatic graphs.  Jakobsen~\cite{MR0340108}
proved that every $7$-chromatic graph has a $K_7$ with two edges
missing as a minor. It is apparently not known whether every
$7$-chromatic graph is contractible to $K_7$ with one edge
missing. Kawarabayashi and Toft~\cite{MR2141662} proved that every
$7$-chromatic graph is contractible to $K_7$ or $K_{4,4}$.

The main result of this paper is that any double-critical $6$- or
$7$-chromatic graph is contractible to the complete graph on six or
seven vertices, respectively. These results are proved in
Sections~\ref{sec:dc6chrom}~and~\ref{sec:dc7chrom} using results of
Gy{\H{o}}ri~\cite{MR652887} and Mader~\cite{MR0229550}, but not the
Four Colour Theorem. Krusenstjerna-Hafstr{\o}m and
Toft~\cite{MR634546} proved that any double-critical $k$-chromatic ($k
\geq 5$) non-complete graph is $5$-connected and
$(k+1)$-edge-connected. In Section~\ref{sec:connectivity} we extend
that result by proving that any double-critical $k$-chromatic ($k \geq
6$) non-complete graph is $6$-connected. In Section~\ref{sec:basic} we
exhibit a number of basic properties of double-critical non-complete
graphs. In particular, we observe that the minimum degree of any
double-critical non-complete $k$-chromatic graph $G$ is at least $k+1$
and that no two vertices of degree $k+1$ are adjacent in $G$, cf.
Proposition~\ref{prop:minimumDegree} and
Theorem~\ref{th:noAdjacentLowVertices}. Gallai~\cite{MR0188100} also
used the concept of decomposable graphs in the study of critical
graphs. In Section~\ref{sec:decomposable} we use double-critical
decomposable graphs to study the maximum ratio between the number of
double-critical edges in a non-complete critical graph and the size of
the graph, in particular, we prove that, for every non-complete
$4$-critical graph $G$, this ratio is at most $1/2$ and the maximum is
attained if and only if $G$ is a wheel. Finally, in
Section~\ref{sec:mixed}, we study two variations of the concept of
double-criticalness, which we have termed double-edge-criticalness and
mixed-double-criticalness. It turns out to be straightforward to show
that the only double-edge-critical graphs and mixed-double-critical
graphs are the complete graphs.
\section{Notation}
\label{sec:notation}
All graphs considered in this paper are simple and finite. We let
$n(G)$ and $m(G)$ denote the order and size of a graph $G$,
respectively. The path, the cycle and the complete graph on $n$
vertices is denoted $P_n$, $C_n$ and $K_n$, respectively. The
\emph{length} of a path or a cycle is its number of edges. The set of
integers $\{1,2, \ldots, k \}$ will be denoted $[k]$. A
\emph{$k$-colouring} of a graph $G$ is a function $\varphi$ from the
vertex set $V(G)$ of $G$ into a set $\mc{C}$ of cardinality $k$ so
that $\varphi(u) \neq \varphi (v)$ for every edge $uv \in E(G)$, and a
graph is \emph{$k$-colourable} if it has a $k$-colouring.  The
elements of the set $\mc{C}$ is referred to as colours, and a vertex
$v \in V(G)$ is said to be assigned the colour $\varphi (v)$ by $\phi$.  The set
of vertices $S$ assigned the same colour $c \in \mc{C}$ is said to
constitute the colour class $c$. The minimum integer $k$ for which a
graph $G$ is $k$-colourable is called its \emph{chromatic number} of
$G$ and it is denoted $\chi (G)$. An \emph{independent set} $S$ of
$V(G)$ is a set such that the induced graph $G[S]$ is edge-empty. The
maximum integer $k$ for which there exists an independent set $S$ of
$G$ of cardinality $k$ is the \emph{independence number} of $G$ and is
denoted $\alpha (G)$. A graph $H$ is a \emph{minor} of a graph $G$ if
$H$ can be obtained from $G$ by deleting edges and/or vertices and
contracting edges. An \emph{$H$-minor} of $G$ is a minor of $G$
isomorphic to $H$. Given a graph $G$ and a subset $U$ of $V(G)$ such
that the induced graph $G[U]$ is connected, the graph obtained
from $G$ by contracting $U$ into one vertex is denoted $G/U$, and the
vertex of $G/U$ corresponding to the set $U$ of $G$ is denoted $v_U$.
Let $\delta (G)$ denote the minimum degree of $G$. For a vertex $v$ of
a graph $G$, the (open) neighbourhood of $v$ in $G$ is denoted
$N_G(v)$ while $N_G[v]$ denotes the closed neighbourhood $N_G(v) \cup
\{ v \}$.  Given two subsets $X$ and $Y$ of $V(G)$, we denote by
$E[X,Y]$ the set of edges of $G$ with one end-vertex in $X$ and the
other end-vertex in $Y$, and by $e(X,Y)$ their number. If $X = Y$,
then we simply write $E(X)$ and $e(X)$ for $E[X,X]$ and $e(X,X)$,
respectively. The induced graph $G[N(v)]$ is refered to as the
neighbourhood graph of $v$ (w.r.t. $G$) and it is denoted $G_v$.  The
independence number $\alpha (G_v)$ is denoted $\alpha_v$. The degree
of a vertex $v$ in $G$ is denoted $\deg_G (v)$ or $\deg (v)$. A graph
$G$ is called \emph{vertex-critical} or, simply, \emph{critical} if
$\chi (G - v) < \chi(G)$ for every vertex $v \in V(G)$. Moreover, a
critical graph $G$ is called \emph{double-critical} if
\begin{equation}
\chi (G - x - y) \leq \chi(G) - 2 \textrm{ for all edges } xy \in E(G)
\label{eq:928568345}
\end{equation}
It is clear that $\chi (G - x - y)$ can never be strictly less than
$\chi (G) - 2$ and so we could require $\chi (G - x - y ) = \chi(G) -
2$ in~\eqref{eq:928568345}. The fact that any double-critical graph is
vertex-critical implies that any double-critical graph is connected.
The concept of vertex-critical graphs was first introduced by Dirac
\cite{MR0045371} and have since been studied extensively, see, for
instance, \cite{JensenToft95}. As noted by Dirac~\cite{MR0045371},
every critical $k$-chromatic graph $G$ has minimum degree $\delta (G)
\geq k-1$. An edge $xy \in E(G)$ such that $\chi (G - x - y) = \chi
(G) - 2$ is referred to as a \emph{double-critical edge}. For
graph-theoretic terminology not explained in this paper, we refer the
reader to~\cite{MR2368647}.
\section{Basic properties of non-complete double-critical graphs}
\label{sec:basic}
In this section we let $G$ denote a non-complete double-critical $k$-chromatic
graph. Thus, by the aforementioned results, $k \geq 6$.
\begin{proposition}
The graph $G$ does not contain a complete $(k-1)$-graph as a subgraph.
\label{prop:forbiddenCompleteKminusOne}
\end{proposition}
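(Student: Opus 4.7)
The plan is to argue by contradiction: suppose $G$ contains a $K_{k-1}$ subgraph on vertex set $W = \{w_1, \ldots, w_{k-1}\}$, and set $U := V(G) \setminus W$. Since $\chi(G) = k$ implies $|V(G)| \geq k$ with equality only when $G = K_k$, the assumption that $G$ is non-complete yields $|V(G)| \geq k+1$, and hence $|U| \geq 2$.

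The first structural observation I would establish is that $U$ is an independent set of $G$. Indeed, if some edge $uv \in E(G)$ had both endpoints in $U$, then double-criticality would give $\chi(G - u - v) \leq k - 2$; but $W$ still induces a $K_{k-1}$ in $G - u - v$, forcing $\chi(G - u - v) \geq k - 1$, a contradiction. The second observation is that no vertex of $U$ is adjacent to every vertex of $W$: if some $u^* \in U$ were so adjacent, then $W \cup \{u^*\}$ would induce a $K_k$, and choosing any $v \in U \setminus \{u^*\}$ (possible since $|U| \geq 2$), this $K_k$ would lie inside $G - v$ and give $\chi(G - v) \geq k$, contradicting the criticality $\chi(G - v) = k - 1$.

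From these two facts, the contradiction follows by exhibiting an explicit $(k-1)$-colouring of $G$. I would assign the vertices of $W$ pairwise distinct colours from $\{1, \ldots, k-1\}$, and colour each $u \in U$ with the colour of any non-neighbour of $u$ in $W$ (such a non-neighbour exists by the second observation). Since $U$ is independent no conflict arises between two vertices of $U$, and by construction $u$'s colour avoids all its $W$-neighbours, so the colouring is proper. This gives $\chi(G) \leq k-1$, contradicting $\chi(G) = k$.

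The main conceptual hurdle is simply noticing that double-criticality is strong enough to kill any edge inside $U$ in one stroke, because the surviving $K_{k-1}$ provides a chromatic lower bound of $k-1$ in $G - u - v$. Once that observation is in hand, no further technical tool (e.g.\ Kempe chains, connectivity bounds, or the later stronger results $\delta(G) \geq k+1$ and $6$-connectedness) is needed, so I do not anticipate any substantive obstacle in the proof.
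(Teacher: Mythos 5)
Your proof is correct, and it shares the paper's opening move: using double-criticality to show that $U := V(G)\setminus W$ is independent (in the paper's phrasing, $G - K_{k-1}$ is edge-empty). After that the two proofs diverge in an interesting way. The paper invokes $\delta(G)\geq k-1$ (a standard consequence of vertex-criticality due to Dirac) together with the independence of $U$ to conclude that \emph{every} vertex of $U$ is adjacent to all of $W$, hence $K_k \subseteq G$, hence $G = K_k$ by criticality, contradicting non-completeness. You instead argue the opposite intermediate fact --- that \emph{no} vertex of $U$ can be adjacent to all of $W$ --- by picking a second vertex $v \in U$ (which requires your extra observation $|U|\geq 2$) and noting $G-v$ would still contain a $K_k$; you then close by building an explicit proper $(k-1)$-colouring. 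Both routes are valid and roughly the same length; the paper's is a touch more economical because it reuses the known minimum-degree bound and needs only $|U|\geq 1$, whereas yours is self-contained in the sense that it produces a concrete $(k-1)$-colouring rather than invoking $\delta(G)\geq k-1$.
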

\begin{proof}
  Suppose $G$ contains $K_{k-1}$ as a subgraph. Since $G$ is
  double-critical and $k$-chromatic, it follows that $G - K_{k-1}$ is
  edge-empty, but not vertex-empty. Since $G$ is also vertex-critical,
  $\delta(G) \geq k-1$, and therefore every $v \in V(G-K_{k-1})$ is
  adjacent to every vertex of $V(K_{k-1})$ in $G$, in particular, $G$
  contains $K_k$ as a subgraph. Since $G$ is vertex-critical, $G =
  K_k$, a contradiction.
\end{proof}
\begin{proposition}
  If $H$ is a connected subgraph of $G$ with $n(H) \geq 2$, then the
  graph $G/V(H)$ obtained from $G$ by contracting $H$ is
  $(k-1)$-colourable.
\end{proposition}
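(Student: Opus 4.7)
The plan is to exploit double-criticality by finding an edge inside $H$ on which to apply the defining property. Since $H$ is connected and has at least two vertices, we may pick any edge $xy \in E(H)$. Double-criticality of $G$ then yields $\chi(G - x - y) \leq k-2$, so there exists a proper colouring $\varphi \colon V(G)\setminus\{x,y\} \to \{1,2,\dots,k-2\}$.

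Next I would construct a $(k-1)$-colouring $\psi$ of $G/V(H)$ directly from $\varphi$. The vertex set of $G/V(H)$ is $(V(G)\setminus V(H)) \cup \{v_H\}$, and since $\{x,y\} \subseteq V(H)$, the colouring $\varphi$ is already defined on every vertex of $V(G)\setminus V(H)$. Define $\psi(u) = \varphi(u)$ for each $u \in V(G)\setminus V(H)$, and set $\psi(v_H) = k-1$, a colour not used by $\varphi$.

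Finally I would verify that $\psi$ is proper. Any edge of $G/V(H)$ joining two vertices $u,u' \in V(G)\setminus V(H)$ is an edge of $G$ on two vertices distinct from $x$ and $y$, hence an edge of $G-x-y$, so $\varphi(u)\neq\varphi(u')$. Any edge incident with $v_H$ has its other endpoint in $V(G)\setminus V(H)$, which receives a colour in $\{1,\dots,k-2\}$, distinct from $\psi(v_H)=k-1$. Thus $\psi$ is a proper $(k-1)$-colouring and $\chi(G/V(H)) \leq k-1$.

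There is essentially no obstacle here: the entire argument is to notice that contracting $V(H)$ needs only one extra colour beyond what suffices for $G-x-y$, and that the two deleted vertices $x,y$ are absorbed into the contracted vertex $v_H$. The statement is a direct structural consequence of the definition of a double-critical edge, so no deeper results (such as those in Sections~\ref{sec:dc6chrom}--\ref{sec:connectivity}) are required.
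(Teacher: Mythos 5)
Your proof is correct and follows essentially the same route as the paper: pick an edge in $H$, use double-criticality to obtain a $(k-2)$-colouring of $G$ minus its endpoints, restrict that colouring to $V(G)\setminus V(H)$, and spend one new colour on the contracted vertex. The paper states this more tersely but the argument is identical.
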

\begin{proof}
  The graph $H$ contains at least one edge $uv$, and the graph $G - u
  - v$ is $(k-2)$-colourable, which, in particular, implies that the
  graph $G-H$ is $(k-2)$-colourable. Now, any $(k-2)$-colouring of
  $G-H$ may be extended to a $(k-1)$-colouring of $G/V(H)$ by
  assigning a new colour to the vertex $v_{V(H)}$.
\end{proof}
Given any edge $xy \in E(G)$, define 
\begin{eqnarray*}
  A(xy) & := & N(x) \backslash N[y] \\
  B(xy) & := & N(x) \cap N(y) \\
  C(xy) & := & N(y) \backslash N[x] \\
  D(xy) & := & V(G) \backslash ( N(x) \cup N(y) ) \\
  &  = & V(G) \backslash \left( A(xy) \cup B(xy) \cup C(xy) \cup \{ x, y \} \right)
\end{eqnarray*}
We refer to $B(x,y)$ as the \emph{common neighbourhood} of $x$ and $y$
(in $G$).

In the proof of Proposition~\ref{prop:CyclesFromGenKempeChains} we use
what has become known as generalized Kempe chains, cf.~\cite{MR693366,
  MR1373659}. Given a $k$-colouring $\varphi$ of a graph $H$, a
vertex $x \in H$ and a permutation $\pi$ of the colours $1, 2, \ldots,
k$. Let $N_1$ denote the set of neighbours of $x$ of colour $\pi
(\varphi(x))$, let $N_2$ denote the set of neighbours of $N_1$ of
colour $\pi (\pi (\varphi(x)))$, let $N_3$ denote the set of
neighbours of $N_2$ of colour $\pi^3 (\varphi(x))$, etc. We call $N(x,
\varphi, \pi) = \{x \} \cup N_1 \cup N_2 \cup \cdots$ \emph{a
  generalized Kempe chain from $x$ w.r.t.\ $\varphi$ and $\pi$}.
Changing the colour $\varphi(y)$ for all vertices $y \in N(x, \varphi,
\pi)$ from $\varphi(y)$ to $\pi (\varphi(y))$ gives a new
$k$-colouring of $H$.
\begin{proposition}
  For all edges $xy \in E(G)$, $(k-2)$-colourings of $G - x - y$ and
  any non-empty sequence $j_1, j_2, \ldots, j_i$ of $i$ different
  colours from $[k-2]$, there is a path of order $i+2$ starting at
  $x$, ending at $y$ and with the $t$'th vertex after $x$ having
  colour $j_t$ for all $t \in [i]$. In particular, $xy$ is contained
  in at least $(k-2)! / (k-2-i)!$ cycles of length $i+2$.
\label{prop:CyclesFromGenKempeChains}
\end{proposition}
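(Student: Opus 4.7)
The plan is to prove the path-existence statement by a generalised Kempe-chain argument and to derive the cycle count as an immediate corollary. Fix an edge $xy \in E(G)$, a $(k-2)$-colouring $\varphi$ of $G-x-y$, and distinct colours $j_1,\ldots,j_i \in [k-2]$. Define iteratively $V_0 := \{x\}$ and, for $t = 1,\ldots,i$, $V_t := N_G(V_{t-1}) \cap \varphi^{-1}(j_t)$; these sets are pairwise disjoint since the $j_t$ are distinct. The desired path exists precisely when $V_i \cap N_G(y) \neq \emptyset$, so I suppose for contradiction that $V_i \cap N_G(y) = \emptyset$.

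The aim is to recolour $G$ to reach a $(k-2)$-colouring, contradicting $\chi(G)=k$. I first extend $\varphi$ to a proper partial colouring of $G-y$ by giving $x$ an auxiliary colour $c \in [k-2]\setminus\{j_1,\ldots,j_i\}$, if necessary adjusting $\varphi$ by a preliminary Kempe swap so that $c \notin \varphi(N_G(x)\setminus\{y\})$; the borderline case $i = k-2$ is handled separately, using the cycle $\pi=(j_1\,j_2\,\cdots\,j_i)$ itself together with the fact that $\varphi$ cannot be extended to $G$. Apply the generalised Kempe shift at $x$ with the cyclic permutation $\pi=(c\,j_1\,j_2\,\cdots\,j_i)$: by construction, the first $i$ levels $N_1,\ldots,N_i$ of the chain coincide with $V_1,\ldots,V_i$, after which the chain continues with neighbours of colour $c$, and so on cyclically until it stabilises. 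The resulting $\varphi'$ is a proper $(k-2)$-colouring of $G-y$ with $\varphi'(x)=j_1$. Using the assumption $V_i \cap N_G(y) = \emptyset$, I conclude that the shift does not exhaust $[k-2]$ on $N_G(y)$, so some colour remains available for $y$, producing a $(k-2)$-colouring of $G$ and the required contradiction.

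The main obstacle is verifying that, after the cyclic Kempe shift, at least one colour is genuinely unused on $N_G(y)$ in $\varphi'$: a priori, the chain could revisit $N_G(y)$ at several higher levels (which also carry colour $j_i$, say) and force every colour class to be represented at $N_G(y)$. The resolution is a careful bookkeeping of which chain levels can intersect $N_G(y)$, using $V_i \cap N_G(y) = \emptyset$ together with the distinctness of $j_1,\ldots,j_i$ to identify a colour in $\{c, j_1,\ldots,j_i\}$ or outside $\{c, j_1,\ldots,j_i\}$ (where $\varphi'$ and $\varphi$ agree) that avoids $N_G(y)$. Once the path-existence is established, the cycle count is immediate: there are $(k-2)!/(k-2-i)!$ ordered sequences of $i$ distinct colours from $[k-2]$, and each yields a distinct cycle of length $i+2$ through $xy$, because reading the colours of the interior vertices along the cycle starting from $x$ recovers the sequence uniquely (the fixed edge incidences at $x$ and $y$ single out the traversal direction).
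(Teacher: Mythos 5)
Your approach diverges from the paper's in a way that creates two serious problems, the first of which is fatal.

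\textbf{The preliminary Kempe swap cannot succeed.} You want to assign $x$ an existing colour $c \in [k-2]$, ``if necessary adjusting $\varphi$ by a preliminary Kempe swap so that $c \notin \varphi(N_G(x)\setminus\{y\})$.'' But no $(k-2)$-colouring of $G-x-y$ can leave any colour unused on $N_G(x)\setminus\{y\}$: if some $c$ were absent there, we could set $\varphi(x)=c$ to obtain a proper $(k-2)$-colouring of $G-y$, and then assigning a fresh $(k-1)$-th colour to $y$ would give a proper $(k-1)$-colouring of $G$, contradicting $\chi(G)=k$. So after any Kempe swap the resulting $(k-2)$-colouring of $G-x-y$ still has every colour represented on $N_G(x)\setminus\{y\}$, and your setup never gets off the ground. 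The borderline case $i=k-2$ is not the only obstruction; the construction fails for every $i$. The paper sidesteps this entirely by extending $\varphi$ to a $(k-1)$-colouring of $G-xy$ in which \emph{both} $x$ and $y$ receive the same new colour $k-1$. Because $x$ and $y$ are then the only vertices with colour $k-1$, the cyclic shift $\pi=(k-1,j_1,\ldots,j_i)$ either moves $x$ off colour $k-1$ while leaving $y$ alone (yielding a proper $(k-1)$-colouring of $G$, a contradiction) or the chain must reach $y$ at level $i+1$, producing the desired path.

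\textbf{The recolouring argument is not worked out.} Even setting aside the first issue, you acknowledge that showing ``some colour remains available for $y$'' after the cyclic shift is the crux, and the proposed resolution is only a promise of ``careful bookkeeping'' without an actual argument. This is not a minor gap: the chain can wrap around and revisit $N_G(y)$ at higher levels (with the colours having cycled), and nothing in the sketch rules out that every colour class meets $N_G(y)$ after the shift. In the paper's version this difficulty never arises, because one aims only for a $(k-1)$-colouring of $G$ (not of $G$ after deleting $y$), and the conclusion that the edge $xy$ is properly coloured is read off immediately from $\psi(x)\neq k-1=\psi(y)$. I would recommend adopting the paper's device of the shared auxiliary colour $k-1$ on $\{x,y\}$; once that is in place, your sets $V_1,\ldots,V_i$ coincide with the chain levels $N_1,\ldots,N_i$ and the rest of your outline (including the cycle count, which is fine) goes through.
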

\begin{proof}
  Let $xy$ denote an arbitrary edge of $G$ and let $\varphi$ denote a
  $(k-2)$-colouring of $G - x - y$ which uses the colours of $[k-2]$.
  The function $\varphi$ is extended to a proper $(k-1)$-colouring of
  $G - xy$ by defining $\varphi(x) = \varphi(y) = k-1$. Let $\pi$
  denote the cyclic permutation $(k-1, j_1, j_2, \ldots, j_i)$. If the
  generalized Kempe chain $N(x, \varphi, \pi)$ does not contain the
  vertex $y$, then by reassigning colours on the vertices of $N(x,
  \varphi, \pi)$ as described above, a $(k-1)$-colouring $\psi$ of $G
  - xy$ with $\psi(x) \neq k-1 = \psi (y)$ is obtained, contradicting
  the fact that $G$ is $k$-chromatic. Thus, the generalized Kempe
  chain $N(x, \varphi, \pi)$ must contain the vertex $y$. Since $x$
  and $y$ are the only vertices which are assigned the colour $k-1$ by
  $\varphi$, it follows that the induced graph $G[N(x, \varphi, \pi)]$
  contains an $(x,y)$-path of order $i+2$ with vertices coloured
  consecutively $k-1, j_1, j_2, \ldots, j_i, k-1$. The last claim of
  the proposition follows from the fact there are
  $(k-2)! / (k-2-i)!$ ways of selecting and ordering $i$ elements from the
  set $[k-2]$.
\end{proof}
Note that the number of cycles of a given length obtained in
Proposition~\ref{prop:CyclesFromGenKempeChains} is exactly the number
of such cycles in the complete $k$-graph. Moreover,
Proposition~\ref{prop:CyclesFromGenKempeChains} immediately implies
the following result.
\begin{corollary}
  For all edges $xy \in E(G)$ and $(k-2)$-colourings of $G - x - y$,
  the set $B(xy)$ of common neighbours of $x$ and $y$ in $G$ contains
  vertices from every colour class $i \in [k-2]$, in particular,
  $|B(xy)| \geq k-2$, and $xy$ is contained in at least $k-2$
  triangles.
\label{cor:atLeastKminusTwoCommonNeighbours}
\end{corollary}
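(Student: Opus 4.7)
The plan is to obtain the corollary as a direct specialization of Proposition~\ref{prop:CyclesFromGenKempeChains} to the case $i = 1$. Fix an edge $xy \in E(G)$ and a $(k-2)$-colouring $\varphi$ of $G - x - y$ using the colour set $[k-2]$. For each colour $j \in [k-2]$, I would apply the proposition to the one-term sequence $j_1 = j$: this yields a path of order $3$ starting at $x$, ending at $y$, whose unique intermediate vertex $v_j$ is coloured $j$ by $\varphi$.

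The vertex $v_j$ is adjacent to both $x$ and $y$, so $v_j \in B(xy)$, and it lies in the colour class $j$. Hence $B(xy)$ meets every colour class of $\varphi$. Since the $k-2$ vertices $v_1, v_2, \ldots, v_{k-2}$ have pairwise distinct colours, they are pairwise distinct, which gives $|B(xy)| \geq k-2$. Finally, each $v_j$ together with the edge $xy$ forms a triangle $x v_j y$, and these $k-2$ triangles are distinct because their apex vertices $v_j$ are distinct; therefore $xy$ lies in at least $k-2$ triangles.

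There is no real obstacle here: the whole content is already packaged in Proposition~\ref{prop:CyclesFromGenKempeChains}, and the only thing to verify is that different colours produce different common neighbours, which is immediate from the properness of $\varphi$. The proof is essentially a one-line invocation of the proposition with $i=1$, followed by the observation that a path $x, v, y$ of order $3$ is the same object as a triangle on $\{x, v, y\}$ containing the edge $xy$.
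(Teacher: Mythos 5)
Your proof is correct and takes essentially the same approach as the paper, which simply notes that Corollary~\ref{cor:atLeastKminusTwoCommonNeighbours} follows immediately from Proposition~\ref{prop:CyclesFromGenKempeChains}; you have merely spelled out the specialization to $i=1$.
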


\begin{proposition}
  For all vertices $x \in V(G)$, the minimum degree in the induced graph
  of the neighbourhood of $x$ in $G$ is at least $k-2$, that is, $\delta
  (G_x) \geq k-2$.
\label{prop:minimumDegreeInGx}
\end{proposition}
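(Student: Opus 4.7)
The plan is to reduce the statement directly to Corollary~\ref{cor:atLeastKminusTwoCommonNeighbours}. I would first unpack the definition: fix any vertex $x \in V(G)$ and let $y$ be an arbitrary vertex of $G_x = G[N_G(x)]$. Then by definition $y \in N_G(x)$, so $xy$ is an edge of $G$, and the degree of $y$ in $G_x$ equals the number of vertices of $N_G(x)$ adjacent to $y$ in $G$, which is precisely $|N_G(x) \cap N_G(y)| = |B(xy)|$.

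Next I would invoke Corollary~\ref{cor:atLeastKminusTwoCommonNeighbours}, which guarantees $|B(xy)| \geq k-2$ for every edge $xy \in E(G)$. Combining this with the identity $\deg_{G_x}(y) = |B(xy)|$ established above gives $\deg_{G_x}(y) \geq k-2$. Since $y$ was arbitrary, $\delta(G_x) \geq k-2$.

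There is no real obstacle here: the content of the proposition is essentially a restatement of the common-neighbourhood lower bound from the corollary, packaged in terms of the neighbourhood graph $G_x$. The only thing one has to be careful about is checking that $G_x$ is non-empty, which follows from Proposition~\ref{prop:minimumDegree} (or already from the fact that critical $k$-chromatic graphs have $\delta(G) \geq k-1 \geq 5$ for $k \geq 6$), so that the statement $\delta(G_x) \geq k-2$ is meaningful.
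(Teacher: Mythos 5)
Your proof is correct and follows exactly the paper's approach: identify the degree of $y$ in $G_x$ with $|B(xy)|$ and apply Corollary~\ref{cor:atLeastKminusTwoCommonNeighbours}. The added remark about $G_x$ being non-empty is a harmless extra precaution not present in the paper.
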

\begin{proof}
  According to
  Corollary~\ref{cor:atLeastKminusTwoCommonNeighbours}, $|B(xy)|
  \geq k-2$ for any vertex $y \in N(x)$, which implies that $y$ has at
  least $k-2$ neighbours in $G_x$.
\end{proof}
\begin{proposition}
  For any vertex $x \in V(G)$, there exists a vertex $y \in N(x)$ such
  that the set $A(xy)$ is not empty.
\label{prop:AxyNonEmpty}
\end{proposition}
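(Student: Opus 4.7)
The plan is to argue by contradiction: suppose that for some vertex $x \in V(G)$ we have $A(xy) = \emptyset$ for every $y \in N(x)$. Unpacking the definition, $A(xy) = N(x) \setminus N[y] = \emptyset$ means that every vertex of $N(x)$ different from $y$ is a neighbour of $y$. If this holds for all $y \in N(x)$, then any two vertices of $N(x)$ are adjacent, so the neighbourhood graph $G_x$ is a complete graph.

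Next I would invoke Dirac's lower bound on the minimum degree of a critical graph, quoted earlier in the excerpt: since $G$ is $k$-critical, $\deg_G(x) \geq k-1$, i.e.\ $|N(x)| \geq k-1$. Combined with the previous step, $G_x$ contains a complete subgraph on at least $k-1$ vertices, so $G$ contains $K_{k-1}$ as a subgraph. This directly contradicts Proposition~\ref{prop:forbiddenCompleteKminusOne}, which forbids a $K_{k-1}$-subgraph in the non-complete double-critical $k$-chromatic graph $G$. Hence there must exist some $y \in N(x)$ with $A(xy) \neq \emptyset$.

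There is no real obstacle here; the statement is an immediate consequence of Proposition~\ref{prop:forbiddenCompleteKminusOne} together with the Dirac bound $\delta(G) \geq k-1$. The only thing to double-check is the reading of the definitions: $A(xy) = \emptyset$ for every neighbour $y$ of $x$ is exactly the assertion that $G[N(x)]$ is complete, which is what makes the combinatorial translation clean.
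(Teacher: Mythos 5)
Your proof is correct, and it rests on the same key ingredient as the paper (Proposition~\ref{prop:forbiddenCompleteKminusOne}: $G$ has no $K_{k-1}$ subgraph), but it gets there by a slightly different and somewhat more elementary route. You argue by contraposition: if $A(xy) = \emptyset$ for every $y \in N(x)$, then $G_x$ is complete, and since Dirac's bound gives $|N(x)| \geq k-1$ for a critical graph, $G$ would contain a $K_{k-1}$, a contradiction. The paper instead argues directly: it fixes an arbitrary $z \in N(x)$, invokes Corollary~\ref{cor:atLeastKminusTwoCommonNeighbours} to get $|B(xz)| \geq k-2$, observes that $B(xz) \cup \{x\}$ cannot be a clique, and extracts two non-adjacent vertices $y, y' \in B(xz)$, so that $y' \in A(xy)$. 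Your version dispenses with the common-neighbourhood corollary and needs only the Dirac bound; the paper's construction, by contrast, produces a witness $y$ lying inside a common neighbourhood $B(xz)$, which is a marginally sharper statement that is not actually needed here. One point worth commending: you correctly used the weak bound $\delta(G) \geq k-1$ rather than the paper's later and stronger $\delta(G) \geq k+1$ from Proposition~\ref{prop:minimumDegree}, which would have been circular, since that proposition's proof depends on the very statement you are proving.
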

\begin{proof}
  Let $x$ denote any vertex of $G$, and let $z$ in $N(x)$. The common
  neighbourhood $B(xz)$ contains at least $k-2$ vertices, and so,
  since $K_{k-1}$ is not a subgraph of $G$, not every pair of vertices
  of $B(xy)$ are adjacent, say $y, y' \in B(xz)$ are non-adjacent. Now
  $y' \in A(xy)$, in particular, $A(xy)$ is not empty.
\end{proof}
\begin{proposition}
  There exists at least one edge $xy \in E(G)$ such that the set
  $D(xy)$ is not empty.
\label{prop:DnotEmpty}
\end{proposition}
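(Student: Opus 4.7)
The plan is to proceed by contradiction. Assume that $D(xy) = \emptyset$ for every edge $xy \in E(G)$; equivalently, $N(x) \cup N(y) = V(G)$ whenever $xy \in E(G)$, so every edge of $G$ dominates $V(G)$. Since $G$ is non-complete, there is a vertex $x \in V(G)$ with $S := V(G) \setminus N[x] \neq \emptyset$, and my goal is to use the dominating-edge hypothesis to build enough structure between $N(x)$ and $S$ to force $\chi(G) \leq k-1$.

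First I would verify two simple structural claims. For any $y \in N(x)$ and any $s \in S$, applying the assumption to the edge $xy$ gives $s \in N(x) \cup N(y)$, and since $s \notin N(x)$ we conclude $s \in N(y)$; hence every vertex of $N(x)$ is joined to every vertex of $S$. Second, $S$ is an independent set: if $s, s' \in S$ were adjacent, applying the assumption to $ss'$ would force $x \in N(s) \cup N(s')$, contradicting $s, s' \notin N(x)$.

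With these two observations the contradiction is almost immediate. Pick any $(k-1)$-colouring $\varphi$ of $G - x$, which exists because $G$ is $k$-critical. Any $s \in S$ is adjacent to every vertex of $N(x)$, so $\varphi(s)$ differs from every colour appearing on $N(x)$; hence $\varphi$ uses at most $k-2$ of the $k-1$ colours on $N(x)$, leaving at least one colour free for $x$. Extending $\varphi$ by assigning $x$ this free colour produces a proper $(k-1)$-colouring of $G$, contradicting $\chi(G) = k$.

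The only potential obstacle is the existence of a vertex $x$ with $V(G) \setminus N[x] \neq \emptyset$, but this is forced by $G$ being non-complete: otherwise $N[x] = V(G)$ for every $x$, making $G$ complete. So the proof is short and essentially a direct consequence of the dominating-edge assumption together with criticality.
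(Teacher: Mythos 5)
Your proof is correct, and it takes a genuinely different and substantially more elementary route than the paper's. The paper derives Proposition~\ref{prop:DnotEmpty} from Proposition~\ref{prop:AxyNonEmpty}: it takes an edge $uv$ with $A(uv)\neq\emptyset$, picks $a\in A(uv)$, and uses the $(k-2)$-colouring of $G-u-v$ together with $|B(uv)|\geq k-2$ to find a non-neighbour $z$ of $a$ in $B(uv)$, whence $a\in D(vz)$. That chain of reasoning relies on the double-criticality machinery (Corollary~\ref{cor:atLeastKminusTwoCommonNeighbours} and Proposition~\ref{prop:forbiddenCompleteKminusOne}). Your argument bypasses all of that: assuming every edge dominates $V(G)$, you observe that for a vertex $x$ with $S:=V(G)\setminus N[x]\neq\emptyset$, the domination condition applied to the edges $xy$ ($y\in N(x)$) forces every $s\in S$ to be adjacent to all of $N(x)$, so the colour of $s$ in a $(k-1)$-colouring of $G-x$ is free for $x$, contradicting $\chi(G)=k$. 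This uses only vertex-criticality and non-completeness, so it actually proves the stronger statement that \emph{any} non-complete critical graph has an edge $xy$ with $D(xy)\neq\emptyset$; the paper's proof, by contrast, is specific to double-critical graphs. One minor remark: your observation that $S$ is independent is correct but unused in the final colouring argument, so it could be dropped.
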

\begin{proof}
  According to Proposition~\ref{prop:AxyNonEmpty}, there exists at
  least one edge $uv \in E(G)$ such that $A(uv)$ is non-empty. Fix a
  vertex $a \in A(uv)$. This vertex $a$ cannot be adjacent to every
  vertex of $B(uv)$, since that, according to
  Corollary~\ref{cor:atLeastKminusTwoCommonNeighbours}, would leave
  no colour available for $a$ in a $(k-2)$-colouring of $G -
  u - v$. Suppose $a$ is not adjacent to $z \in B(uv)$. Now $a \in
  D(vz)$, in particular, $D(vz)$ is not empty.
\end{proof}
\begin{proposition}
  If $A(xy)$ is non-empty for some edge $xy \in E(G)$, then $\delta
  (G[A(xy)]) \geq 1$, that is, the induced subgraph $G[A(xy)]$ contains
  no isolated vertices. By symmetry, $\delta (G[C(xy)]) \geq 1$, if
  $C(xy)$ is non-empty.
\label{prop:noIsolatedInAxy}
\end{proposition}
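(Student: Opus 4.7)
My plan is to argue by contradiction. Suppose some $a \in A(xy)$ is isolated in $G[A(xy)]$. Since $a \in A(xy) \subseteq N(x)$, the pair $ax$ is an edge of $G$, so double-criticality yields a $(k-2)$-colouring $\varphi$ of $G - a - x$ with colour set $[k-2]$.

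The central idea is to apply Corollary~\ref{cor:atLeastKminusTwoCommonNeighbours} to the edge $ax$ (rather than to $xy$), which forces the common neighbourhood $B(ax) = N(a) \cap N(x)$ to meet every colour class $i \in [k-2]$ under $\varphi$. I would then unpack $B(ax)$ using the partition $N(x) = A(xy) \cup B(xy) \cup \{y\}$: the isolation hypothesis gives $N(a) \cap A(xy) = \emptyset$, and $a \notin N(y)$ gives $N(a) \cap \{y\} = \emptyset$, so $B(ax)$ collapses to $N(a) \cap B(xy)$. Consequently, every colour in $[k-2]$ is already realised inside the subset $N(a) \cap B(xy)$ of $B(xy)$.

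To close the argument, observe that $y \in V(G - a - x)$ carries some colour $\varphi(y) \in [k-2]$, so there is a vertex $b \in N(a) \cap B(xy)$ with $\varphi(b) = \varphi(y)$; but $b \in B(xy) \subseteq N(y)$ forces $by \in E(G)$, contradicting the properness of $\varphi$. The symmetric claim for $C(xy)$ follows by interchanging the roles of $x$ and $y$. There is no substantial obstacle here: the only ingenuity required is to invoke Corollary~\ref{cor:atLeastKminusTwoCommonNeighbours} on the auxiliary edge $ax$ instead of on $xy$, after which the unavoidable appearance of the colour $\varphi(y)$ inside $N(y)$ produces the required monochromatic edge.
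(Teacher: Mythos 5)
Your proof is correct and is essentially the paper's own argument: both consider the edge $ax$, observe that the isolation of $a$ in $G[A(xy)]$ (together with $a \notin N[y]$) forces $B(ax) \subseteq B(xy) \subseteq N(y)$, invoke Corollary~\ref{cor:atLeastKminusTwoCommonNeighbours} to conclude all $k-2$ colours appear on $B(ax)$, and derive the contradiction that no colour is left for $y$. You merely spell out the colour-clash at $y$ slightly more explicitly than the paper does.
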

\begin{proof}
  Suppose $G[A(xy)]$ contains some isolated vertex, say $a$. Now,
  since $G$ is double-critical, $|B(xa)| \geq k-2$, and, since $a$ is
  isolated in $A(xy)$, the common neighbours of $x$ and $a$ must lie
  in $B(xy)$, in particular, any $(k-2)$-colouring of $G - a - x$ must
  assign all colours of the set $[k-2]$ to common neighbours of $a$
  and $x$ in $B(xy)$. But this leaves no colour in the set $[k-2]$
  available for $y$, which contradicts the fact that $G - a - x$ is
  $(k-2)$-colourable. This contradiction implies that $G[A(xy)]$
  contains no isolated vertices.
\end{proof}
\begin{proposition}
  If some vertex $y \in N(x)$ is not adjacent to some vertex $z \in
  N(x) \backslash \{y\}$, then there exists another vertex $w \in N(x)
  \backslash \{y,z \}$, which is also not adjacent to
  $y$. Equivalently, no vertex of the complement $\overline{G_x}$ has
  degree $1$ in $\overline{G_x}$.
\label{prop:missingNeighboursInGx}
\end{proposition}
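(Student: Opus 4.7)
The plan is to argue by contradiction and reduce the statement to a direct application of Proposition~\ref{prop:noIsolatedInAxy}. Suppose, for contradiction, that there exist $y, z \in N(x)$ with $yz \notin E(G)$ such that $y$ is adjacent in $G$ to every vertex of $N(x) \setminus \{y, z\}$. I would then analyze the edge $xy$ and compute the set $A(xy) = N(x) \setminus N[y]$ explicitly.

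The key computation is that $A(xy) = \{z\}$. Indeed, $y$ itself is excluded from $A(xy)$ because $y \in N[y]$, while every other vertex of $N(x) \setminus \{y, z\}$ is excluded because it lies in $N(y)$ by assumption. The only vertex of $N(x)$ surviving in $A(xy)$ is therefore $z$, and $A(xy)$ is non-empty. But then $G[A(xy)]$ is a single-vertex graph, and $z$ is an isolated vertex of $G[A(xy)]$, contradicting Proposition~\ref{prop:noIsolatedInAxy}. This contradiction establishes the desired vertex $w \in N(x) \setminus \{y,z\}$ non-adjacent to $y$.

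The equivalent reformulation in terms of the complement $\overline{G_x}$ is automatic: a vertex $y$ of degree $1$ in $\overline{G_x}$ corresponds to a vertex $y \in N(x)$ with exactly one non-neighbour $z$ in $N(x) \setminus \{y\}$, which is precisely the situation ruled out above. I do not anticipate any serious obstacle here, since the bulk of the work has already been done in Proposition~\ref{prop:noIsolatedInAxy}; the present proposition is essentially a direct corollary, obtained by recognising that forbidding a degree-$1$ vertex in $\overline{G_x}$ is the same as forbidding $|A(xy)| = 1$.
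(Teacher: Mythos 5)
Your proof is correct and takes essentially the same approach as the paper: both arguments observe that $z \in A(xy)$ and then invoke Proposition~\ref{prop:noIsolatedInAxy} to conclude $A(xy) \setminus \{z\}$ is non-empty, yielding the desired $w$. The only difference is presentational (you frame it as a contradiction by computing $A(xy) = \{z\}$, while the paper states the conclusion directly).
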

\begin{proof}
  This follows directly from Proposition~\ref{prop:noIsolatedInAxy}.
  If $y \in N(x)$ is not adjacent to $z \in N(x) \backslash \{y\}$,
  then $z \in A(xy)$ and, since $G[A(xy)]$ contains no isolated
  vertices, the set $A(xy) \backslash \{ z \}$ cannot be empty. 
\end{proof}
\begin{proposition}
  Every vertex of $G$ has at least $k+1$ neighbours.
\label{prop:minimumDegree}
\end{proposition}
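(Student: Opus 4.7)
The plan is to suppose for contradiction that some vertex $x \in V(G)$ has $\deg(x) \leq k$, and derive a contradiction in each of the two cases $\deg(x) = k-1$ and $\deg(x) = k$. Recall that Dirac's bound already gives $\delta(G) \geq k-1$, since $G$ is vertex-critical and $k$-chromatic, so these are the only two cases to consider.

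In the case $\deg(x) = k-1$, I would fix any neighbour $y \in N(x)$. Since $B(xy) \subseteq N(x) \setminus \{y\}$ and the right-hand side has cardinality $k-2$, Corollary~\ref{cor:atLeastKminusTwoCommonNeighbours} forces $B(xy) = N(x)\setminus\{y\}$. In other words, $y$ is adjacent to every other vertex of $N(x)$. Since $y \in N(x)$ was arbitrary, the induced subgraph $G[N(x)]$ is a complete graph on $k-1$ vertices, directly contradicting Proposition~\ref{prop:forbiddenCompleteKminusOne}.

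In the case $\deg(x) = k$, the same counting argument is tighter: for each $y \in N(x)$ we have $B(xy) \subseteq N(x)\setminus\{y\}$ (a set of size $k-1$) and $|B(xy)| \geq k-2$, so $y$ has at most one non-neighbour among the vertices of $N(x)\setminus\{y\}$. Put differently, every vertex of the complement $\overline{G_x}$ has degree at most $1$. Proposition~\ref{prop:missingNeighboursInGx} rules out any vertex of degree exactly $1$ in $\overline{G_x}$, so $\overline{G_x}$ is edge-empty. Hence $G[N(x)] = K_k$, and together with $x$ this produces a $K_{k+1}$ subgraph (a fortiori a $K_{k-1}$ subgraph) of $G$, again contradicting Proposition~\ref{prop:forbiddenCompleteKminusOne}.

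Both cases lead to a contradiction, so $\deg(x) \geq k+1$ for every $x \in V(G)$. The only mildly delicate point is the case $\deg(x) = k$, where one must combine the lower bound $|B(xy)| \geq k-2$ from Corollary~\ref{cor:atLeastKminusTwoCommonNeighbours} with the structural restriction of Proposition~\ref{prop:missingNeighboursInGx}; neither fact alone is strong enough. Apart from that, the argument is a direct counting exercise using previously established propositions, with no need to invoke Kempe chains or colouring directly at this stage.
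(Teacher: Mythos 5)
Your proof is correct. The paper's own proof is a short direct count: it uses Proposition~\ref{prop:AxyNonEmpty} to find a specific $y \in N(x)$ with $A(xy) \neq \emptyset$, then Proposition~\ref{prop:noIsolatedInAxy} to get $|A(xy)| \geq 2$, and concludes $\deg_G(x) = |A(xy)| + |B(xy)| + 1 \geq 2 + (k-2) + 1 = k+1$ in one line. Your argument instead proceeds by contradiction, case-splitting on the two values $\deg(x) = k-1$ and $\deg(x) = k$ that Dirac's bound leaves open, and in each case showing $\overline{G_x}$ is forced to be edge-empty, so $G$ contains a forbidden $K_{k-1}$ (Proposition~\ref{prop:forbiddenCompleteKminusOne}). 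The underlying ingredients are almost identical: both hinge on $|B(xy)| \geq k-2$ (Corollary~\ref{cor:atLeastKminusTwoCommonNeighbours}) and on the ``no degree-$1$ vertex in $\overline{G_x}$'' structure, since Proposition~\ref{prop:missingNeighboursInGx} is just the complement restatement of Proposition~\ref{prop:noIsolatedInAxy}. What you gain is that you avoid Proposition~\ref{prop:AxyNonEmpty} entirely and make the role of the $K_{k-1}$-free property explicit; what the paper gains is brevity by sidestepping the case analysis. Both are valid, and the difference is mostly one of packaging rather than of mathematical substance.
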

\begin{proof}
  According to Proposition~\ref{prop:AxyNonEmpty}, for any vertex $x
  \in V(G)$, there exists a vertex $y \in N(x)$ such that $A(xy) \neq
  \emptyset$, and, according to
  Proposition~\ref{prop:noIsolatedInAxy}, $\delta(G[A(xy)]) \geq 1$,
  in particular, $|A(xy)| \geq 2$. Since $N(x)$ is the union of the
  disjoint sets $A(xy)$, $B(xy)$ and $\{ y \}$, we obtain
$$\deg_G (x) = |N(x)| \geq |A(xy)| + |B(xy)| + 1 \geq 2 + (k-2) + 1 = k+1$$
where we used the fact that $|B(xy)| \geq k-2$, according to
Corollary~\ref{cor:atLeastKminusTwoCommonNeighbours}.
\end{proof}
\begin{proposition}
  For any vertex $x \in V(G)$,
\begin{equation}
  \deg_G (x) - \alpha_x \geq |B(xy)| + 1 \geq k - 1
\label{eq:43658721084}
\end{equation}
where $y \in N(x)$ is any vertex contained in an independent set in
$N[x]$ of size $\alpha_x$. Moreover, $\alpha_x \geq 2$.
\label{prop:upperBoundOnAlphaGx}
\end{proposition}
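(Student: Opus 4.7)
The plan is to reduce the inequality to a counting statement on $A(xy)$, where the key fact that no vertex of $G[A(xy)]$ is isolated (Proposition~\ref{prop:noIsolatedInAxy}) will supply the crucial "$+1$".

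First I would establish $\alpha_x \geq 2$. By Proposition~\ref{prop:AxyNonEmpty}, there is some $y \in N(x)$ with $A(xy) \neq \emptyset$; any $z \in A(xy)$ is then a non-neighbour of $y$ inside $N(x)$, so $\{y, z\}$ is an independent set in $G_x$ and $\alpha_x \geq 2$.

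Next I would choose a maximum independent set $S \subseteq N(x)$ in $G_x$ (so $|S| = \alpha_x$) and fix any $y \in S$. Every vertex of $S \setminus \{y\}$ belongs to $N(x)$ and is non-adjacent to $y$, so $S \setminus \{y\} \subseteq A(xy)$. In particular $A(xy)$ is non-empty and contains an independent set of size $\alpha_x - 1$. On the other hand, any independent set $T \subseteq A(xy)$ satisfies $T \cup \{y\}$ independent in $G_x$ (since $y$ has no neighbour in $A(xy)$), forcing $|T| \leq \alpha_x - 1$. Hence $\alpha(G[A(xy)]) = \alpha_x - 1$.

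Now I invoke Proposition~\ref{prop:noIsolatedInAxy}: the graph $G[A(xy)]$ has no isolated vertex. If we had $|A(xy)| = \alpha(G[A(xy)])$, the whole vertex set would be an independent set, making every vertex isolated; therefore $|A(xy)| \geq \alpha(G[A(xy)]) + 1 = \alpha_x$. Because $N(x)$ is the disjoint union $A(xy) \sqcup B(xy) \sqcup \{y\}$, we obtain
\[
  \deg_G(x) \;=\; |A(xy)| + |B(xy)| + 1 \;\geq\; \alpha_x + |B(xy)| + 1,
\]
which rearranges to $\deg_G(x) - \alpha_x \geq |B(xy)| + 1$. The final bound $|B(xy)| + 1 \geq k - 1$ is immediate from Corollary~\ref{cor:atLeastKminusTwoCommonNeighbours}.

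The only step that is not pure bookkeeping is the upgrade from $|A(xy)| \geq \alpha_x - 1$ (trivial) to $|A(xy)| \geq \alpha_x$; this is precisely where the absence of isolated vertices in $G[A(xy)]$ is used, and it is what supplies the extra $+1$ in the inequality that would otherwise be missing.
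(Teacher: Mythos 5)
Your proof is correct and takes essentially the same route as the paper: both reduce the claim to showing $|A(xy)| \geq \alpha_x$ using Proposition~\ref{prop:noIsolatedInAxy} as the key ingredient, then finish with $\deg_G(x) = |A(xy)| + |B(xy)| + 1$ and Corollary~\ref{cor:atLeastKminusTwoCommonNeighbours}. The only cosmetic difference is in how the $+1$ is extracted — the paper fixes two adjacent vertices $a_1,a_2 \in A(xy)$ and notes $S$ can contain at most one of them, whereas you first establish $\alpha(G[A(xy)]) = \alpha_x - 1$ and then observe that a strict inequality $|A(xy)| > \alpha(G[A(xy)])$ follows from the absence of isolated vertices; these are interchangeable ways of packaging the same idea.
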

\begin{proof}
  Let $S$ denote an independent set in $N(x)$ of size $\alpha_x$.
  Obviously, $\alpha_x \geq 2$, otherwise $G$ would contain a $K_k$.
  Choose some vertex $y \in S$. Now the non-empty set $S \backslash \{
  y \}$ is a subset of $A(xy)$, and, according to
  Proposition~\ref{prop:noIsolatedInAxy}, $\delta (G[A(xy)]) \geq 1$.
  Let $a_1$ and $a_2$ denote two neighbouring vertices of $A(xy)$. The
  independet set $S$ of $G_x$ contains at most one of the vertices
  $a_1$ and $a_2$, say $a_1 \notin S$. Therefore $S$ is a subset of
  $\{ y \} \cup A(xy) \backslash \{ a_1 \}$, and so we obtain
$$\alpha_x \leq |A(xy)| = |N(x)| - |B(xy)| - 1 \leq \deg_G (x) - (k-2) - 1$$
from which~\eqref{eq:43658721084} follows.
\end{proof}
\begin{proposition}
  For any vertex $x$ not joined to all the other vertices of $G$,
  $\chi (G_x) \leq k-3$.
\label{prop:kMinusTwoColourableNeighbourGraphGx}
\end{proposition}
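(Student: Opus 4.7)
The plan is to locate an edge $zw \in E(G)$ with both endpoints outside $N[x]$, and then to apply double-criticality directly to this edge. Once such an edge is in hand, take any $(k-2)$-colouring $\varphi$ of $G - z - w$; since neither $z$ nor $w$ belongs to $N[x]$, the vertex set $V(G - z - w)$ contains all of $N[x]$, so $\varphi$ properly colours $N[x]$. The neighbours of $x$ cannot share the colour $\varphi(x)$, so the restriction of $\varphi$ to $N(x)$ uses only the $k-3$ colours of $[k-2] \setminus \{\varphi(x)\}$, which yields $\chi(G_x) \leq k-3$ immediately.

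The main obstacle is to verify that $G[U]$ contains at least one edge, where $U := V(G) \setminus N[x]$. The set $U$ is non-empty by hypothesis. I would argue by contradiction: suppose $U$ is independent in $G$. For any $z \in U$, independence forces $N(z) \cap U = \emptyset$, while $z \notin N(x)$ forces $x \notin N(z)$; together these yield $N(z) \subseteq N(x)$. Pick such a $z$ and any $y \in N(z) \subseteq N(x)$. Since $yz \in E(G)$, double-criticality gives a $(k-2)$-colouring $\varphi$ of $G - y - z$. The neighbours of $x$ must avoid the colour $\varphi(x)$, so $N(x) \setminus \{y\}$ uses at most $k - 3$ distinct colours; consequently $N(z) \setminus \{y\} \subseteq N(x) \setminus \{y\}$ also uses at most $k-3$ colours. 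On the other hand, vertex-criticality of $G$ gives $\chi(G - y) = k - 1 > k - 2$, so $\varphi$ cannot be extended to a $(k-2)$-colouring of $G - y$ by colouring $z$; this forces every colour of $[k-2]$ to appear on $N(z) \setminus \{y\}$, a total of $k-2$ distinct colours. Since $k \geq 6$ gives $k - 3 < k - 2$, we reach the desired contradiction, and so $G[U]$ must contain an edge $zw$, completing the proof.
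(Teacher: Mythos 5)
Your proof is correct, and it reaches the same key intermediate goal as the paper's argument -- producing an edge $zw$ with both endpoints outside $N[x]$ -- but by a genuinely different route. The paper uses connectivity to find $y \in N(x)$ with $C(xy) \neq \emptyset$ and then invokes Proposition~\ref{prop:noIsolatedInAxy} (which itself rests on the generalized Kempe chain machinery via Corollary~\ref{cor:atLeastKminusTwoCommonNeighbours}) to extract an edge inside $C(xy) \subseteq V(G) \setminus N[x]$. You instead argue directly: assuming $U := V(G) \setminus N[x]$ is independent forces $N(z) \subseteq N(x)$ for each $z \in U$, and then the $(k-2)$-colouring of $G - y - z$ for some $y \in N(z)$ simultaneously (i) restricts the colours on $N(z) \setminus \{y\}$ to the $k-3$ colours avoiding $\varphi(x)$, and (ii) by vertex-criticality ($\chi(G - y) = k-1$) must place all $k-2$ colours on $N(z) \setminus \{y\}$, a contradiction. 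Your version is more self-contained -- it trades the paper's reliance on the earlier lemma about isolated vertices in $A(xy)$ for a short ad hoc argument using only vertex-criticality and double-criticality -- while the paper's version has the advantage of reusing already-developed machinery. Once the edge is found, the closing step (restrict a $(k-2)$-colouring of $G - z - w$ to $N[x]$ and discard the colour of $x$) is identical in both.
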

\begin{proof} Since $G$ is connected there must be some vertex, say
  $z$, in $V(G) \backslash N[x]$, which is adjacent to some vertex,
  say $y$, in $N(x)$. Now, clearly, $z$ is a vertex of $C(xy)$, in
  particular, $C(xy)$ is not empty, which, according to
  Proposition~\ref{prop:noIsolatedInAxy}, implies that $C(xy)$
  contains at least one edge, say $e = zv$. Since $G$ is
  double-critical, it follows that $\chi (G - z - v ) \leq k-2$, in
  particular, the subgraph $G[N[x]]$ of $G - z - v$ is
  $(k-2)$-colourable, and so $G_x$ is $(k-3)$-colourable.
\end{proof}

\begin{proposition}
  If $\deg_G (x) = k+1$, then the complement $\overline{G_x}$ consists
  of isolated vertices (possibly none) and cycles (at least one),
  where the length of the cycles are at least five.
\label{prop:structureOfGx}
\end{proposition}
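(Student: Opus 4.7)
The plan is to analyse the component structure of $\overline{G_x}$ using the degree and independence constraints on $G_x$ established earlier, and then rule out short cycles via a colouring argument.

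First I would combine $\deg_G(x)=k+1$ with $\delta(G_x)\ge k-2$ (Proposition~\ref{prop:minimumDegreeInGx}): every vertex of $\overline{G_x}$ has at most $(k+1)-1-(k-2)=2$ non-neighbours in $N(x)$, so $\overline{G_x}$ has maximum degree at most $2$. Proposition~\ref{prop:missingNeighboursInGx} then forbids vertices of degree exactly $1$ in $\overline{G_x}$, so each component is either an isolated vertex or a cycle. A cycle must actually be present, for otherwise $G_x=K_{k+1}$ and $G[\{x\}\cup N(x)]=K_{k+2}$, contradicting $\chi(G)=k$.

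To control the cycle lengths, I would substitute $\deg_G(x)=k+1$ into Proposition~\ref{prop:upperBoundOnAlphaGx} to obtain $\alpha_x\le 2$. A triangle in $\overline{G_x}$ is an independent triple in $G_x$, so this already excludes $3$-cycles.

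The hard part will be eliminating $4$-cycles. Suppose $\overline{G_x}$ contains a $4$-cycle $a$-$b$-$c$-$d$-$a$; then in $G$ we have $ac,bd\in E(G)$ while $ab,bc,cd,da\notin E(G)$. Let $v_1,\ldots,v_{k-3}$ be the remaining neighbours of $x$. From $\{b,d\}\subseteq A(xa)$, $|B(xa)|\ge k-2$ (Corollary~\ref{cor:atLeastKminusTwoCommonNeighbours}), and $|N(x)|=k+1$, I would force $A(xa)=\{b,d\}$ and $B(xa)=\{c,v_1,\ldots,v_{k-3}\}$; in particular each $v_i$ is adjacent to $a$, and a symmetric computation with $b$, $c$, $d$ in place of $a$ shows each $v_i$ is adjacent to all four of $a,b,c,d$. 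Now fix any $(k-2)$-colouring $\varphi$ of $G-x-a$. Corollary~\ref{cor:atLeastKminusTwoCommonNeighbours} guarantees that $\varphi$ realises every colour of $[k-2]$ exactly once on the $(k-2)$-set $B(xa)$, so $\{\varphi(v_1),\ldots,\varphi(v_{k-3})\}=[k-2]\setminus\{\varphi(c)\}$. Since $b$ is adjacent to every $v_i$, the colour $\varphi(b)$ must avoid this $(k-3)$-set and therefore equals $\varphi(c)$; the same reasoning yields $\varphi(d)=\varphi(c)$. But $bd\in E(G)$ demands $\varphi(b)\ne\varphi(d)$, a contradiction, which rules out $4$-cycles and completes the proof.
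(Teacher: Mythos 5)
Your proof is correct. The first three steps (bounding $\Delta(\overline{G_x})\le 2$ via $\delta(G_x)\ge k-2$, excluding degree-$1$ vertices via Proposition~\ref{prop:missingNeighboursInGx}, and forcing at least one cycle because otherwise $G\supseteq K_{k+2}$) match the paper's reasoning essentially verbatim. Where you diverge is in ruling out short cycles. The paper picks a vertex $s$ on a cycle $C$, notes $A(xs)=\{r,t\}$, uses a $(k-2)$-colouring of $G-x-s$ to show that $r$ and $t$ each have a unique non-neighbour in $B(xs)$ and that these non-neighbours $q,u$ are distinct (because $r$ and $t$ are adjacent and each colour appears exactly once on $B(xs)$), yielding the induced $\overline{G_x}$-path $q$-$r$-$s$-$t$-$u$; this forces $|C|\ge 5$ in one stroke and never separates the $3$- and $4$-cycle cases. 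You instead split: triangles fall directly to $\alpha_x\le 2$ (a clean consequence of Proposition~\ref{prop:upperBoundOnAlphaGx} with $\deg_G(x)=k+1$), and for $4$-cycles you run a different colouring argument -- colouring $G-x-a$, observing that each of the $k-3$ off-cycle neighbours $v_i$ is in $B(xa)$ and adjacent to all four cycle vertices, and deducing $\varphi(b)=\varphi(c)=\varphi(d)$, contradicting $bd\in E(G)$. Both approaches hinge on Corollary~\ref{cor:atLeastKminusTwoCommonNeighbours} and the fact that $|B(xs)|=k-2$ exactly when $\deg_G(x)=k+1$; the paper's version is more economical because it never needs the case split, while yours is perhaps easier to follow since the $3$-cycle case is dispatched for free by the independence bound.
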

\begin{proof}
  Given $\deg_G (x) = k + 1$, suppose that some vertex $y \in G_x$ has
  three edges missing in $G_x$, say $yz_1, yz_2, yz_3$. Now $B(xy)$ is
  a subset of $N(x)\backslash \{y, z_1, z_2, z_3 \}$.  However,
  $|N(x)\backslash \{y, z_1, z_2, z_3 \}| = (k+1) - 4$, which implies
  $|B(xy)| \leq k-3$, contrary to
  Corollary~\ref{cor:atLeastKminusTwoCommonNeighbours}. Thus no vertex
  of $G_x$ is missing more than two edges. According to
  Proposition~\ref{prop:noIsolatedInAxy}, if a vertex of $G_x$ is
  missing one edge, then it is missing at least two edges. Thus, it
  follows that $\overline{G_x}$ consists of isolated vertices and
  cycles. If $\overline{G_x}$ consists of only isolated vertices, then
  $G_x$ would be a complete graph, and $G$ would contain a complete
  $(k+1)$-graph, contrary to our assumptions.  Thus, $\overline{G_x}$
  contains at least one cycle $C$. Let $s$ denote a vertex of $C$, and
  let $r$ and $t$ denote the two distinct vertices of $A(xs)$. Now $G
  - x - s$ is $(k-2)$-colourable and, according to
  Corollary~\ref{cor:atLeastKminusTwoCommonNeighbours}, each of the
  $k-2$ colours is assigned to at least one vertex of the common
  neighbourhood $B(xs)$. Thus, both $r$ and $t$ must have at least one
  non-neighbour in $B(xs)$, and, since $r$ and $t$ are adjacent, it
  follows that $r$ and $t$ must have distinct non-neighbours, say $q$
  and $u$, in $B(xs)$. Now, $q,r,s,t$ and $u$ induce a path of length
  four in $\overline{G_x}$ and so the cycle $C$ containing $P$ has
  length at least five.
\end{proof}

\begin{theorem}
  No two vertices of degree $k+1$ are adjacent in $G$.
\label{th:noAdjacentLowVertices}
\end{theorem}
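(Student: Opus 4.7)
The plan is to suppose, for contradiction, that $xy \in E(G)$ with $\deg_G(x) = \deg_G(y) = k+1$, and then combine the structural propositions above with a colouring analysis of $G - x - a_1$ for some $a_1 \in A(xy)$.

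First I would pin down tight degree counts. From $\deg_G(x) = |A(xy)| + |B(xy)| + 1 = k+1$, Corollary~\ref{cor:atLeastKminusTwoCommonNeighbours} (giving $|B(xy)| \geq k-2$) and Proposition~\ref{prop:noIsolatedInAxy} (which forces $|A(xy)| \geq 2$), I obtain $|A(xy)| = |C(xy)| = 2$ and $|B(xy)| = k-2$. Writing $A(xy) = \{a_1, a_2\}$ and $C(xy) = \{c_1, c_2\}$, Proposition~\ref{prop:noIsolatedInAxy} further gives $a_1 a_2, c_1 c_2 \in E(G)$. Proposition~\ref{prop:upperBoundOnAlphaGx} forces $\alpha_x = \alpha_y = 2$, and Proposition~\ref{prop:kMinusTwoColourableNeighbourGraphGx} gives $\chi(G_x) \leq k - 3$. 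Combining with $\chi(G_x) \geq |V(G_x)|/\alpha_x = (k+1)/2$ yields $k \geq 7$; for $k = 6$ this is already a contradiction, so no vertex of $G$ has degree $k+1$ and the theorem holds trivially.

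For $k \geq 7$ I apply Proposition~\ref{prop:structureOfGx} to both $G_x$ and $G_y$. Since $y$ has exactly two non-neighbours in $N(x)$ (namely $a_1, a_2$), the vertex $y$ lies on a cycle of length $\geq 5$ in $\overline{G_x}$ with cycle-neighbours $a_1, a_2$; consequently $a_1$ has a unique $B$-non-neighbour $b_\alpha \in B(xy)$ and $a_2$ has a unique $B$-non-neighbour $b_\beta$. Symmetrically, $c_1, c_2$ acquire unique $B$-non-neighbours $b_\gamma, b_\delta$ from the $x$-cycle in $\overline{G_y}$. The step I expect to be most delicate is proving $\{b_\alpha, b_\beta\} = \{b_\gamma, b_\delta\}$: since the non-edges inside $B(xy)$ are intrinsic, $b_\alpha$ has the same unique $B$-non-neighbour from either perspective; as $b_\alpha$ is adjacent to $x$, the only possible further non-neighbour in $N(y)$ is $c_1$ or $c_2$; and Proposition~\ref{prop:structureOfGx}'s degree-$0$-or-$2$ conclusion in $\overline{G_y}$ forbids degree $1$, so $b_\alpha \in \{b_\gamma, b_\delta\}$ and similarly $b_\beta \in \{b_\gamma, b_\delta\}$. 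Relabeling $c_1, c_2$ if necessary, I may assume $b_\alpha = b_\gamma$ and $b_\beta = b_\delta$.

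Finally I would exploit the double-criticality of $xa_1$. Let $\varphi$ be a $(k-2)$-colouring of $G - x - a_1$. Since $B(x, a_1) = \{a_2\} \cup (B(xy) \setminus \{b_\alpha\})$ has exactly $k-2$ elements, Corollary~\ref{cor:atLeastKminusTwoCommonNeighbours} makes $\varphi|_{B(x, a_1)}$ a bijection onto $[k-2]$, so $\{\varphi(b_i) : b_i \in B(xy),\ i \neq \alpha\} = [k-2] \setminus \{\varphi(a_2)\}$. Because $c_1$ is adjacent to every vertex of $B(xy) \setminus \{b_\alpha\}$, the only colour available for $c_1$ is $\varphi(a_2)$. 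But then $y$'s neighbourhood in $G - x - a_1$ equals $B(xy) \cup \{c_1, c_2\}$, and the colours on $\{b_i : i \neq \alpha\} \cup \{c_1\}$ already cover all of $[k-2]$, leaving no colour for $y$. This contradicts the existence of $\varphi$, and hence the double-criticality of $xa_1$, completing the proof.
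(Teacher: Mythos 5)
Your proof has a genuine gap: you deduce from Proposition~\ref{prop:noIsolatedInAxy} that $|A(xy)| \geq 2$, but that proposition only forbids isolated vertices in $G[A(xy)]$ \emph{when $A(xy)$ is non-empty}. Since $\deg_G(x) = k+1$ and $|B(xy)| \geq k-2$, the only two possibilities are $|A(xy)| = 0$ (whence $|B(xy)| = k$ and, by the same count applied to $y$, $C(xy) = \emptyset$) or $|A(xy)| = 2$; you handle only the second. The paper devotes the entire first half of its proof to the case $A(xy) = C(xy) = \emptyset$: there it takes a $(k-2)$-colouring of $G - x - y$, notes (using $\alpha_x = 2$) that exactly two of the $k-2$ colours appear twice on the $k$ vertices of $B(xy)$ while the remaining $k-4$ colours appear once, deduces from Proposition~\ref{prop:CyclesFromGenKempeChains} that the $k-4$ singly-coloured vertices induce a $K_{k-4}$, and then uses Proposition~\ref{prop:structureOfGx} on $\overline{G[B(xy)]}$ to force a $K_{k-1}$ subgraph, contradicting Proposition~\ref{prop:forbiddenCompleteKminusOne}. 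Without an analogue of this, your argument is incomplete for $k \geq 7$. (Your preliminary observation, that for $k=6$ the bounds $\chi(G_x) \geq \lceil (k+1)/2 \rceil$ from $\alpha_x = 2$ and $\chi(G_x) \leq k-3$ from Proposition~\ref{prop:kMinusTwoColourableNeighbourGraphGx} together rule out any degree-$(k+1)$ vertex, does dispose of both cases at once when $k=6$ --- a nice shortcut the paper does not state --- but it does nothing for $k \geq 7$.)

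For the case $A(xy) \neq \emptyset$ your argument is correct and genuinely different from the paper's. Where the paper colours $G - x - y$, matches the cycle segments $(u_1,\dots,u_i)$ and $(v_1,\dots,v_j)$ through $B(xy)$, and produces a forbidden $K_{k-1}$, you instead colour $G - x - a_1$, observe that $B(x,a_1) = \{a_2\} \cup (B(xy) \setminus \{b_\alpha\})$ has exactly $k-2$ vertices so that $\varphi$ restricted to it is a bijection onto $[k-2]$, identify $b_\alpha$ with $c_1$'s unique non-neighbour in $B(xy)$ via the cycle structure of $\overline{G_x}$ and $\overline{G_y}$, force $\varphi(c_1) = \varphi(a_2)$, and conclude $y$ has no legal colour. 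That is a clean and somewhat shorter contradiction than the paper's, and the matching step $\{b_\alpha, b_\beta\} = \{b_\gamma, b_\delta\}$, which you flag as delicate, does in fact go through as you sketch. Still, the missing empty-$A(xy)$ case must be supplied before the proof is complete.
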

\begin{proof}
  Firstly, suppose $x$ and $y$ are two adjacent vertices of degree
  $k+1$ in $G$. Suppose that the one of the sets $A(xy)$ and $C(xy)$
  is empty, say $A(xy) = \emptyset$. Then $|B(xy)| = k$ and $C(xy) =
  \emptyset$. Obviously, $\alpha_x \geq 2$, and it follows from
  Proposition~\ref{prop:upperBoundOnAlphaGx} that $\alpha_x$ is equal
  to two. Let $\varphi$ denote a $(k-2)$-colouring of $G - x - y$. Now
  $|B(xy)| = k$, $\alpha_x = 2$ and the fact that $\varphi$ applies
  each colour $c \in [k-2]$ to at least one vertex of $B(xy)$ implies
  that exactly two colours $i, j \in [k-2]$ are applied twice among the
  vertices of $B(xy)$, say $\varphi( u_1) = \varphi (u_2) = k-3$ and
  $\varphi (v_1) = \varphi (v_2) = k-2$, where $u_1, u_2, v_1$ and
  $v_2$ denotes four distinct vertices of $B(xy)$. Now each of the
  colours $1, \ldots, k-4$ appears exactly once in the colouring of
  the vertices of $W := B(xy) \backslash \{ u_1, u_2, v_1, v_2 \}$,
  say $W = \{ w_1, \ldots, w_{k-4} \}$ and $\varphi (w_i) = i$ for
  each $i \in [k-4]$. Now it follows from
  Proposition~\ref{prop:CyclesFromGenKempeChains} that there exists a
  path $xw_i w_jy$ for each pair of distinct colours $i, j \in [k-4]$.
  Therefore $G[W] = K_{k-4}$. If one of the vertices $u_1, u_2, v_1$
  or $v_2$, say $u_1$, is adjacent to every vertex of $W$, then $G[W
  \cup \{u_1, x, y \}] = K_{k-1}$, which contradicts
  Proposition~\ref{prop:forbiddenCompleteKminusOne}. Hence each of the
  vertices $u_1, u_2, v_1$ and $v_2$ is missing at least one neighbour
  in $W$. It follows from Proposition~\ref{prop:structureOfGx}, that
  the complement $\overline{G[B(xy)]}$ consists of isolated vertices
  and cycles of length at least five. Now it is easy to see that
  $\overline{G[B(xy)]}$ contains exactly one cycle, and we may
  w.l.o.g.\ assume that $u_1w_1v_1v_2w_2u_2$ are the vertices of that
  cycle. Now $G[\{ u_1, v_1 \} \cup W \backslash \{ w_1 \}] =
  K_{k-1}$, and we have again obtained a contradiction.

  Secondly, suppose that one of the sets $A(xy)$ and $C(xy)$ is non-empty,
  say $A(xy) \neq \emptyset$.  Since, according to
  Corollary~\ref{cor:atLeastKminusTwoCommonNeighbours}, the common
  neighbourhood $B(xy)$ contains at least $k-2$ vertices, it follows
  from Proposition~\ref{prop:noIsolatedInAxy} that $|A(xy)|=2$ and so
  $|B(xy)|=k-2$, which implies $|C(xy)| = 2$. Suppose $A(xy) = \{ a_1,
  a_2 \}$, $C(xy) = \{ c_1, c_2 \}$, and let $C_A$ denote the cycle of
  the complement $\overline{G_x}$ which contains the vertices $a_1$,
  $y$ and $a_2$, say $C_A = a_1 y a_2 u_1 \ldots u_i$, where $u_1,
  \ldots, u_i \in B(xy)$ and $i \geq 2$.  Similarly, let $C_C$ denote
  the cycle of the complement $\overline{G_y}$ which contains the
  vertices $c_1$, $x$ and $c_2$, say $C_A = c_1 x c_2 v_1 \ldots v_j$,
  where $v_1, \ldots, v_j \in B(xy)$ and $j \geq 2$. Since both
  $\overline{G_x}$ and $\overline{G_y}$ consists of only isolated
  vertices (possibly none) and cycles, it follows that we must have
  $(u_1, \ldots, u_i) = (v_1, \ldots, v_j)$ or $(u_1, \ldots, u_i) =
  (v_j, \ldots, v_j)$. We assume w.l.o.g.\ that the former holds.

  Let $\varphi$ denote some $(k-2)$-colouring of $G - x - y$ using the
  colours of $[k-2]$, and suppose w.l.o.g.\ $\phi (a_1) = k-2$ and
  $\varphi (a_2) = k-3$. Again, the structure of $\overline{G_x}$ and
  $\overline{G_y}$ implies $\varphi (u_1) = k-3$ and $\varphi (u_i) =
  k-2$, which also implies $\varphi (c_1) = k-2$ and $\varphi (c_2)= k-3$.

  Let $U = B(xy) \backslash \{ u_1, u_i \}$. Now $U$ has size $k-4$
  and precisely one vertex of $U$ is assigned the colour $i$ for each
  $i \in [k-4]$. Since no other vertices of $( N(x) \cup N(y) )
  \backslash U$ is assigned a colour from the set $[k-4]$, it follows
  from Proposition~\ref{prop:CyclesFromGenKempeChains} that for each
  pair of distinct colours $s, t \in [k-4]$ there exists a path
  $xu^s u^t y$ where $u^s$ and $u^t$ are vertices of $U$
  assigned the colours $s$ and $t$, respectively. This implies $G[U] =
  K_{k-4}$. No vertex of $G_x$ has more than two edges missing in
  $G_x$ and so, in particular, each of the adjacent vertices $a_1$ and
  $a_2$ are adjacent to every vertex of $U$. Now $G[U \cup \{ a_1,
  a_2, x \}] = K_{k-1}$, which contradicts
  Proposition~\ref{prop:forbiddenCompleteKminusOne}. Thus, no two
  vertices of degree $k+1$ are adjacent in $G$.
\end{proof}
\section{Decomposable graphs and the ratio of double-critical edges in graphs} \label{sec:decomposable}
A graph $G$ is called \emph{decomposable} if it consists of two
disjoint non-empty subgraphs $G_{1}$ and $G_{2}$ together with all
edges joining a vertex of $G_{1}$ and a vertex of $G_{2}$.
\begin{proposition}
Let $G$ be a graph decomposable into $G_{1}$ and $G_{2}$. Then $G$
is double-critical if and only if $G_{1}$ and $G_{2}$ are both
double-critical.
\label{prop:decomposable}
\end{proposition}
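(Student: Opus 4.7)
The plan is to reduce the entire proposition to the single identity $\chi(G) = \chi(G_1) + \chi(G_2)$, which holds for any decomposable graph $G$. This identity is standard: since every vertex of $G_1$ is adjacent to every vertex of $G_2$, the colour classes of any proper colouring of $G$ split into those lying entirely in $V(G_1)$ and those lying entirely in $V(G_2)$, so the palettes used on the two parts must be disjoint; conversely, optimal colourings of $G_1$ and $G_2$ on disjoint palettes combine to a proper colouring of $G$ of the asserted size. Writing $k_i := \chi(G_i)$ and $k := \chi(G) = k_1 + k_2$, the same identity applies to every subgraph obtained by deleting vertices from $G_1$ and/or $G_2$, since such a subgraph is again decomposable.

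For the forward direction, I will assume $G$ is double-critical and deduce the conclusion for $G_1$; the argument for $G_2$ is symmetric. For any $v \in V(G_1)$, the graph $G - v$ is decomposable into $G_1 - v$ and $G_2$, so the identity gives $\chi(G_1 - v) = \chi(G - v) - k_2 \leq (k-1) - k_2 = k_1 - 1$, which shows that $G_1$ is vertex-critical. For any edge $uv \in E(G_1)$, the graph $G - u - v$ is decomposable into $G_1 - u - v$ and $G_2$, hence $\chi(G_1 - u - v) \leq (k-2) - k_2 = k_1 - 2$, establishing the double-critical edge condition on $G_1$.

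For the converse, I assume both $G_1$ and $G_2$ are double-critical. Vertex-criticality of $G$ is immediate, since removing any vertex $v$, say from $G_1$, drops $\chi(G_1)$ by one by vertex-criticality and therefore drops $\chi(G)$ by one. For the edge condition I split according to whether $uv$ lies inside one of the parts or crosses between them. If $uv \in E(G_1)$, the additivity identity and the double-criticality of $G_1$ give $\chi(G - u - v) = \chi(G_1 - u - v) + k_2 \leq (k_1 - 2) + k_2 = k - 2$, and symmetrically for $G_2$. The only mildly delicate case is the crossing edges: if $u \in V(G_1)$ and $v \in V(G_2)$, then $G - u - v$ decomposes into $G_1 - u$ and $G_2 - v$, and here I must use the full vertex-criticality of both $G_i$ (which is automatic from their double-criticality) to conclude $\chi(G - u - v) = (k_1 - 1) + (k_2 - 1) = k - 2$. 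Beyond this case distinction the proof is mechanical bookkeeping with the additivity identity, so no serious obstacle is expected.
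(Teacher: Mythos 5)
Your proposal is correct and takes essentially the same route as the paper: both rest entirely on the additivity identity $\chi(G) = \chi(G_1) + \chi(G_2)$ for a join, applied to the decompositions of $G - v$ and $G - u - v$ (splitting the edge case according to whether $uv$ lies in one part or crosses between them). You additionally spell out the vertex-criticality step explicitly in both directions, which the paper leaves implicit but which is formally required since the paper's definition of double-critical presupposes criticality.
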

\begin{proof}
Let $G$ be double-critical. Then $\chi(G) = \chi(G_1) + \chi(G_2)$.
Moreover, for $xy \in E(G_1)$ we have $$\chi(G)-2 = \chi(G-x-y) =
\chi(G_1-x-y) + \chi(G_2)$$ which implies $\chi(G_1-x-y) =
\chi(G_1)-2$. Hence $G_1$ is double-critical, and similarly $G_2$ is.

Conversely, assume that $G_{1}$ and $G_{2}$ are both
double-critical. Then for $xy \in E(G_1)$ we have $$\chi(G-x-y) =
\chi(G_1-x-y) + \chi(G_2) = \chi(G_1)-2 + \chi(G_2) = \chi(G)-2$$
For $xy \in E(G_2)$ we have similarly that $\chi(G-x-y) =
 \chi(G)-2$. For $x \in V(G_1)$ and $y \in V(G_2)$ we have $$\chi(G-x-y) =
\chi(G_1-x) + \chi(G_2-y) = \chi(G_1)-1 + \chi(G_2)-1 = \chi(G)-2$$
Hence G is double-critical.
\end{proof}

Gallai proved the theorem that a $k$-critical graph with at most
$2k-2$ vertices is always decomposable~\cite{Gallai63}. It follows
easily from Gallai's Theorem, Proposition~\ref{prop:decomposable} and
the fact that no double-critical non-complete graph with $\chi \leq 5$
exist, that a double-critical $6$-chromatic graph $G \neq K_6$ has at
least $11$ vertices. In fact, such a graph must have at least $12$ vertices. (Suppose $|V(G)| = 11$. Then $G$ cannot be
decomposable by Proposition~\ref{prop:decomposable}; moreover, no
vertex of a $k$-critical graph can have a vertex of degree $|V(G)| -
2$; hence $\Delta (G) = 8$ by
Theorem~\ref{th:noAdjacentLowVertices}, say $\deg (x) = 8$. Let $y$ and $z$ denote the two vertices of $G - N[x]$. The vertices $y$ and $z$ have to be adjacent. Hence $\chi (G - y - z) = 4$ and $\chi (G_x) = 3$, which implies $\chi (G) = 5$, a contradiction.)

It also follows from Gallai's theorem and our
results on double-critical $6$- and $7$-chromatic graphs that any
double-critical $8$-chromatic graph without $K_8$ as a minor, if it
exists, must have at least 15 vertices.

In the second part of the proof of
Proposition~\ref{prop:decomposable}, to prove that an edge $xy$ with
$x \in V(G_1)$ and $y \in V(G_2)$ is double-critical in $G$, we only
need that $x$ is critical in $G_1$ and $y$ is critical in $G_2$. Hence
it is easy to find examples of critical graphs with many
double-critical edges. Take for example two disjoint odd cycles of
equal length $\geq 5$ and join them completely by edges. The result is
a family of $6$-critical graphs in which the proportion of
double-critical edges is as high as we want, say more than 99.99
percent of all edges may be double-critical. In general, for any
integer $k \geq 6$, let $H_{k, \ell}$ denote the graph constructed by
taking the complete $(k-6)$-graph and two copies of an odd cycle
$C_\ell$ with $\ell \geq 5$ and joining these three graphs completely.
Then the non-complete graph $H_{k, \ell}$ is $k$-critical, and the
ratio of double-critical edges to the size of $H_{k, \ell}$ can be
made arbitrarily close to $1$ by choosing the integer $\ell$
sufficiently large. These observations perhaps indicate the difficulty
in proving the Double-Critical Graph Conjecture: it is not enough to
use just a few double-critical edges in a proof of the conjecture.

Taking an odd cycle $C_\ell$ ($\ell \geq 5$)and the complete $2$-graph
and joining them completely, we obtain a non-complete $5$-critical
graph with at least $2/3$ of all edges being double-critical. Maybe
these graphs are best possible:
\begin{conjecture}
  If $G$ denotes a $5$-critical non-complete graph, then $G$ contains
  at most $c := (2 + \frac{1}{3n(G) - 5}) \frac{m(G)}{3}$
  double-critical edges. Moreover, $G$ contains precisely $c$
  double-critical edges if and only if $G$ is decomposable into two
  graphs $G_1$ and $G_2$, where $G_1$ is the complete $2$-graph and
  $G_2$ is an odd cycle of length $\geq 5$.
\label{conj:5crit}
\end{conjecture}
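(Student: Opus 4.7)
The plan is to reformulate the bound as a linear inequality and then attack it using Kempe-chain counts (from the proof of Corollary~\ref{cor:atLeastKminusTwoCommonNeighbours}) together with the known structural results for $5$-critical graphs. Writing $n = n(G)$, $m = m(G)$, and letting $D \subseteq E(G)$ denote the set of double-critical edges, elementary algebra gives
$$c = \frac{(2n-3)\,m}{3n-5},$$
so the target inequality is $(3n-5)\,|D| \le (2n-3)\,m$. A direct verification on $G = K_2 + C_\ell$ ($\ell \ge 5$ odd) confirms the bound is sharp: every edge incident with the $K_2$-part is double-critical but no cycle-edge of $C_\ell$ is, giving $|D| = 2\ell+1$ and $m = 3\ell+1$.

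The first ingredient is local: the proof of Corollary~\ref{cor:atLeastKminusTwoCommonNeighbours} only needs that the particular edge $xy$ be double-critical (together with $\chi(G) = k$), not that all of $G$ is double-critical. Applied with $k=5$, it yields for every $xy \in D$ the bound $|B(xy)| \ge 3$ and, via Proposition~\ref{prop:CyclesFromGenKempeChains}, at least three triangles and six $4$-cycles through $xy$. The global ingredient would be Dirac's bound $\delta(G) \ge 4$ together with Gallai's theorem on the components of the low-vertex subgraph of a $k$-critical graph (each component is an odd cycle or a complete graph on fewer than $k$ vertices). Combining these, I would set up a discharging argument: each edge begins with charge $2n-3$, each double-critical edge demands $3n-5$, and the deficit $n-2$ at each $xy \in D$ is absorbed by surplus at the non-double-critical edges that Corollary~\ref{cor:atLeastKminusTwoCommonNeighbours} supplies (one per triangle through $xy$), subject to the global constraints imposed by Gallai's structure on how densely $D$ can cluster in any part of $G$.

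For the equality case, trace which of the above inequalities are tight. Tightness in the discharging should force every double-critical edge to be incident with one of just two vertices $a,b$, which are themselves adjacent and joined to every other vertex of $G$. With this structure in place, $G - a - b$ is $3$-critical --- hence an odd cycle $C_\ell$ --- and the condition $\ell \ge 5$ follows because $\ell = 3$ would give $G = K_5$, which is complete and therefore excluded. Proposition~\ref{prop:decomposable} plays no direct role here (since $G$ itself is not double-critical), but the decomposable form $K_2 + C_\ell$ is exactly what this analysis extracts.

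The main obstacle is the inequality itself. The target density $(2n-3)/(3n-5)$ strictly exceeds $2/3$ and tends to $2/3$ only as $n \to \infty$, so the naive estimates derived from $|B(xy)| \ge 3$ alone do not obviously give the sharp constant on their own. Closing the gap will likely require exploiting the six $4$-cycles per edge of $D$, or a more global property of $5$-critical graphs (for instance the Mozhan--Stiebitz result that the only double-critical $5$-chromatic graph is $K_5$), to rule out configurations of double-critical edges denser than those in $K_2 + C_\ell$. A secondary difficulty is handling the equality case rigorously, since one must also rule out hybrid configurations in which double-critical edges accumulate in several different parts of $G$ simultaneously.
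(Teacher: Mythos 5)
The statement you were asked to prove is presented in the paper as an open \emph{conjecture} (Conjecture~\ref{conj:5crit}); the paper gives no proof, noting only that computer tests support it for graphs of order less than $12$, that the Mozhan--Stiebitz theorem implies the existence of at least one non-double-critical edge, and that the analogous statement for $4$-critical graphs is established as Theorem~\ref{th:4crit}. Your write-up is therefore not in competition with any proof in the paper, and it is also not itself a proof: you acknowledge explicitly that the main inequality $(3n-5)\,|D| \le (2n-3)\,m$ is not established. The algebraic reformulation and the verification that $K_2 + C_\ell$ meets the bound with equality are correct, but the discharging scheme you describe is named rather than executed --- you do not show how the per-edge deficit of $n-2$ can actually be absorbed from the triangle or $4$-cycle surplus, and you concede that the estimates you have do not reach the sharp constant.

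There is also a misdirection in the outline worth flagging. The paper's proven analogue for $4$-critical graphs does not use a density or discharging argument at all. Its key ingredient is Lemma~\ref{lemma:nonincidentDCedgesIn4crit}, which shows that in a non-complete $4$-critical graph no two double-critical edges are non-incident; hence the double-critical edges form a star or a triangle, and the bound $|D| \le m(G)/2$ follows by an elementary count. A genuine attack on the $5$-critical conjecture would presumably need a structural constraint of this kind on how non-incident double-critical edges can coexist (in $K_2 + C_\ell$ every double-critical edge meets one of the two apex vertices, yet two such edges can be vertex-disjoint), rather than a charge-pushing scheme over all of $G$. Trying to adapt the Lemma~\ref{lemma:nonincidentDCedgesIn4crit} Kempe-chain argument to $k=5$ --- and understanding precisely why it loses strength there --- seems the more promising line, and is certainly closer in spirit to what the paper actually establishes.
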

The conjecture, if true, would be an interesting extension of the
Theorem of Mozhan~\cite{0688.05026} and Stiebitz~\cite{MR882614} that
there exists at least one non-double-critical edge. Computer tests
using the list of vertex-critical graphs made available by Royle
\cite{RoyleCriticalGraphs08} indicate that Conjecture~\ref{conj:5crit}
holds for graphs of order less than 12.  Moreover, the analogous
statement holds for $4$-critical graphs, cf.\ Theorem~\ref{th:4crit} below.
In the proof of Theorem~\ref{th:4crit} we apply the following
lemma, which is of interest in its own right.
\begin{lemma}
  No non-complete $4$-critical graph contains two non-incident
  double-critical edges.
\label{lemma:nonincidentDCedgesIn4crit}
\end{lemma}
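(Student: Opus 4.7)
My plan is to argue by contradiction: suppose $G$ is a non-complete $4$-critical graph containing two non-incident double-critical edges $e_1 = x_1 y_1$ and $e_2 = x_2 y_2$, and produce a proper $3$-colouring of $G$, contradicting $\chi(G) = 4$.

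I first observe that $G$ contains no $K_4$ as a subgraph. Indeed, a non-complete $4$-critical graph has at least five vertices, so any $K_4 \subseteq G$ would admit a vertex $v \notin V(K_4)$, giving $\chi(G - v) \geq 4$ in contradiction with $4$-criticality. Since $\{x_1, y_1, x_2, y_2\}$ already contains the edges $x_1 y_1$ and $x_2 y_2$, this forces one of the four cross-edges $x_1 x_2, x_1 y_2, y_1 x_2, y_1 y_2$ to be absent; by relabelling $x_1 \leftrightarrow y_1$ and/or $x_2 \leftrightarrow y_2$ if necessary, I may assume $x_1 x_2 \notin E(G)$.

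Since each $e_i$ is double-critical, both $G - x_1 - y_1$ and $G - x_2 - y_2$ are bipartite. Fix bipartitions $(A_1, B_1)$ and $(A_2, B_2)$ so that $x_2 \in A_1$, $y_2 \in B_1$, $x_1 \in A_2$, $y_1 \in B_2$, and let $Q_1 = A_1 \cap A_2$, $Q_2 = A_1 \cap B_2$, $Q_3 = B_1 \cap A_2$, $Q_4 = B_1 \cap B_2$, partitioning $V(G) \setminus \{x_1, y_1, x_2, y_2\}$. Because each of $A_1, B_1, A_2, B_2$ is independent in $G$, the only edges of $G$ within this partition run between $Q_1$ and $Q_4$ or between $Q_2$ and $Q_3$; moreover the neighbours of $x_1, y_1, x_2, y_2$ in $V(G) \setminus \{x_1, y_1, x_2, y_2\}$ are confined to $Q_2 \cup Q_4$, $Q_1 \cup Q_3$, $Q_3 \cup Q_4$, $Q_1 \cup Q_2$, respectively.

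Finally, I define $c \colon V(G) \to \{1, 2, 3\}$ by $c(Q_1) = 2$, $c(Q_2) = c(Q_4) = 1$, $c(Q_3) = 3$, $c(x_1) = c(x_2) = 2$, $c(y_1) = 1$, $c(y_2) = 3$. Each colour class is independent: class $1 = Q_2 \cup Q_4 \cup \{y_1\} \subseteq B_2$ and class $3 = Q_3 \cup \{y_2\} \subseteq B_1$ are independent since $B_1$ and $B_2$ are; class $2 = Q_1 \cup \{x_1, x_2\}$ has $Q_1 \subseteq A_2$ (excluding edges between $Q_1$ and $x_1$), $Q_1 \subseteq A_1$ (excluding edges between $Q_1$ and $x_2$), and the only remaining potential internal edge $x_1 x_2$ was excluded by the $K_4$-reduction. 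Hence $c$ is a proper $3$-colouring, so $\chi(G) \leq 3$, the desired contradiction. The main obstacle I expect is hitting on the right $3$-colouring: once the $K_4$-reduction lets one place $x_1$ and $x_2$ in the same class as $Q_1$, the rest of the colouring is essentially forced by the bipartition structure.
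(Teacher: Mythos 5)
Your proof is correct, and it takes a genuinely different route from the paper's. The paper works component-by-component in $G - \{x,y,v,w\}$, invokes the fact that no clique in a critical graph is a separating set (to get connectivity of $G-x-y$ and $G-v-w$), shows that each endpoint of a double-critical edge can only attach to one side of each bipartite component, and then closes with a two-case analysis on how the endpoints of the second edge distribute over the components. Your argument instead superimposes the two bipartitions of $G-x_1-y_1$ and $G-x_2-y_2$ directly: once you place $x_2,y_2$ across the first bipartition, $x_1,y_1$ across the second, and intersect to get the cells $Q_1,\dots,Q_4$, the edge structure among the cells and the attachments of the four special vertices are completely pinned down, and the $3$-colouring is read off with no case split. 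The one extra ingredient you need is the missing cross-edge $x_1x_2$, which you extract cleanly from $K_4$-freeness (a standard consequence of $4$-criticality, also used implicitly in the paper). Your approach is shorter, avoids the external ``no clique cut set'' theorem, and does not depend on connectivity of $G-x_i-y_i$; the paper's approach, while longer, is perhaps more transparent about which adjacencies actually matter. I checked all the class-by-class independence verifications (class $1 = B_2$, class $3 \subseteq B_1$, class $2 \subseteq A_1 \cap A_2$ plus the two special vertices with the cross-edge removed), and they all hold.
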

\begin{proof}[Proof of Lemma~\ref{lemma:nonincidentDCedgesIn4crit}.]
  Suppose $G$ contains two non-incident double-critical edges $xy$ and
  $vw$. Since $ \chi ( G - \{ v,w,x,y \} ) = 2$, each component of $G
  - \{ v,w,x,y \}$ is a bipartite graph. Let $A_i$ and $B_i$ ($i \in
  [j]$) denote the partition sets of each bipartite component of $G -
  \{ v,w,x,y \}$. (For each $i \in [j]$, at least one of the sets
  $A_i$ and $B_i$ are non-empty.) Since $G$ is critical, it follows
  that no clique of $G$ is a cut set of
  $G$~\cite[Th. 14.7]{MR2368647}, in particular, both $G - x - y$ and
  $G - v - w$ are connected graphs.  Hence, in $G - v - w$, there is
  at least one edge between a vertex of $\{ x, y \}$ and a vertex of
  $A_i \cup B_i$ for each $i \in [j]$.  Similarly, for $v$ and $w$ in
  $G - x - y$. If, say $x$ is adjacent to a vertex $a_1 \in A_i$, then
  $y$ cannot be adjacent to a vertex $a_2 \in A_i$, since then there
  would be a an even length $(a_1,a_2)$-path $P$ in the induced graph
  $G[A_i \cup B_i]$ and so the induced graph $G[V(P) \cup \{x,y \}]$
  would contain an odd cycle, which contradicts the fact that the
  supergraph $G - v - w$ of $G[V(P) \cup \{x,y \}]$ is
  bipartite. Similarly, if $x$ is adjacent to a vertex of $A_i$, then
  $x$ cannot be adjacent to a vertex of $B_i$. Similar observations
  hold for $v$ and $w$. Let $A := A_1 \cup \cdots \cup A_j$ and $B :=
  B_1 \cup \cdots \cup B_j$. We may w.l.o.g. assume that the
  neighbours of $x$ in $G - v - w - y$ are in the set $A$ and the
  neighbours of $y$ in $G - v -w - x$ are in $B$.  In the following we
  distinguish between two cases.
\begin{itemize}
\item[(i)] First, suppose that, in $G - x - y$, one of the vertices
  $v$ and $w$ is adjacent to only vertices of $A \cup \{ v, w \}$,
  while the other is adjacent to only vertices of $B \cup \{ v, w \}$.
  By symmetry, we may assume that $v$ in $G - x - y$ is adjacent to
  only vertices of $A \cup \{ w \}$, while $w$ in $G - x - y$ is
  adjacent to only vertices of $B \cup \{ v \}$. In this case we
  assign the colour $1$ to the vertices of $A \cup \{ w \}$, the
  colour $2$ to the vertices of $B \cup \{ v \}$.

  Suppose that one of the edges $xv$ or $yw$ is not in $G$. By
  symmetry, it suffices to consider the case that $xv$ is not in $G$.
  In this case we assign the colour $2$ to the vertex $x$ and the
  colour $3$ to $y$. Since $x$ is not adjacent to any vertices of $B_1
  \cup \cdots \cup B_j$, we obtain a $3$-colouring of $G$, which
  contradicts the assumption that $G$ is $4$-chromatic.

  Thus, both of the edges $xv$ and $yw$ are present in $G$. Suppose
  that $xw$ or $yv$ are missing from $G$. Again, by symmetry, it
  suffices to consider the case where $yv$ is missing from $G$. Now
  assign the colour $2$ to the vertex $x$ and the colour $3$ to the vertex $y$ and a new colour to the vertex $v$. Again, we have a $3$-colouring of $G$, a
  contradiction. Thus each of the edges $xw$ and $yv$ are in $G$, and
  so the vertices $x,y,v$ and $w$ induce a complete $4$-graph in $G$.
  However, no $4$-critical graph $\neq K_4$ contains $K_4$ as a
  subgraph, and so we have a contradiction.

\item[(ii)] Suppose (i) is not the case. Then we may choose the
  notation such that there exist some integer $\ell \in \{2, \ldots, j
  \}$ such that for every integer $s \in \{1, \ldots, \ell \}$ the
  vertex $v$ is not adjacent to a vertex of $B_s$ and the vertex $w$
  is not adjacent to a vertex of $A_s$; and for every integer $t \in
  \{ \ell, \ldots, j \}$ the vertex $v$ is not adjacent to a vertex of
  $A_t$ and the vertex $w$ is not adjacent to a vertex of $B_t$.

  Since $G \nsubseteq K_4$, we may by symmetry assume that $xv \notin
  E(G)$. Now colour the vertices $v,x$ and all vertices of $B_s$
  ($s=1, \ldots, \ell - 1$) with colour $1$; colour the vertex $w$,
  all vertices of $A_s$ $(s=1, \ldots, \ell - 1$) and all vertices of
  $B_t$ ($t=\ell, \ldots, j$) with colour $2$; and colour the vertex
  $y$ and all the vertices of $A_t$ ($t=\ell, \ldots, j$) with colour
  $3$. The result is a $3$-colouring of $G$. This contradicts $G$
  being $4$-chromatic. Hence $G$ does not contain two non-incident
  double-critical edges.

\end{itemize}
\end{proof}

\begin{theorem}
  If $G$ denotes a $4$-critical non-complete graph, then $G$ contains at
  most $m(G)/2$ double-critical edges. Moreover, $G$ contains
  precisely $m(G)/2$ double-critical edges if and only if $G$ contains
  a vertex $v$ of degree $n(G) - 1$ such that the graph $G - v$ is an
  odd cycle of length $\geq 5$.
\label{th:4crit}
\end{theorem}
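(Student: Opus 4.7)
The plan is to combine Lemma~\ref{lemma:nonincidentDCedgesIn4crit} with the elementary observation that a family of pairwise incident edges is either a star (all edges share a common vertex) or consists of the three edges of a triangle. Let $D \subseteq E(G)$ denote the set of double-critical edges. By the lemma the edges of $D$ pairwise share an endpoint, so either all edges of $D$ meet at a single vertex $v$, or $|D|=3$ and $D$ is a triangle.

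In the star case, set $d := \deg_G(v)$, so $|D|\le d$. Using $\delta(G)\ge 3$ (which holds since $G$ is $4$-critical), a double-count of degrees in $G-v$ gives
$$2\,m(G-v) \;\ge\; 2d + 3\bigl(n(G)-1-d\bigr) \;=\; 3\bigl(n(G)-1\bigr) - d,$$
which, together with the trivial bound $d \le n(G)-1$, simplifies to $m(G-v) \ge d$. Hence $m(G) = d + m(G-v) \ge 2d \ge 2|D|$, i.e.\ $|D| \le m(G)/2$. In the triangle case $|D|=3$, and since no non-complete $4$-critical graph has fewer than six vertices, $m(G) \ge 3\,n(G)/2 \ge 9 > 2|D|$, so the inequality is strict.

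To identify the equality case I would trace the slack back through the estimates: $|D|=m(G)/2$ rules out the triangle case and forces, in the star case, $|D|=d$, $m(G-v)=d$, and---by combining $m(G-v)=d$ with the degree lower bound above---$d=n(G)-1$ with every vertex of $G-v$ having degree exactly~$2$ in $G-v$. Thus $v$ is adjacent to every other vertex of $G$ and $G-v$ is $2$-regular, hence a disjoint union of cycles. The assumption that \emph{every} spoke $vx$ is double-critical (so that $G-v-x$ is bipartite for every $x \in V(G-v)$) then collapses this union to a single odd cycle: a second component, odd or even, would survive the deletion of a vertex from another component and leave an odd cycle in $G-v-x$; and $G-v$ cannot be entirely bipartite, or else $G$ would be $3$-colourable. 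Finally $G\ne K_4$ gives cycle length $\ge 5$. The converse direction---verifying that such a graph really is $4$-critical and achieves the bound with equality---is a direct computation: the $\ell$ spokes are double-critical (removing a spoke leaves $C_\ell-x$, a path), while each rim edge $xy$ leaves $K_1 + P_{\ell-2}$ of chromatic number $3$.

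The main obstacle I anticipate is the structural step in the equality case: using a \emph{single} double-critical edge at~$v$ is not enough, and the argument relies on exploiting the full family of double-critical spokes simultaneously to rule out all configurations of odd and even cycle components of $G-v$ except a single odd cycle of length~$\ge 5$.
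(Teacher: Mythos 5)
Your proof is correct and follows the same overall strategy as the paper: invoke Lemma~\ref{lemma:nonincidentDCedgesIn4crit} to reduce to the star case (dismissing the triangle case by a crude size bound), bound $m(G)$ below by $2\deg(v)$, and then characterize equality. The one genuine difference lies in how the central inequality $m(G-v)\ge\deg(v)$ is obtained and how equality is then exploited. The paper uses the structural facts that $G$, being $4$-critical, is $2$-connected and $G-v$ is $3$-chromatic, so that $G-v$ is connected with an odd cycle and hence $m(G-v)\ge n(G)-1\ge\deg(v)$; equality then immediately forces $\deg(v)=n(G)-1$, i.e.\ $v$ dominates, so $G$ is decomposable and $G-v$ is a $3$-critical unicyclic graph, i.e.\ an odd cycle. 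You instead get $m(G-v)\ge\deg(v)$ from the degree sum $2m(G-v)\ge 3(n(G)-1)-\deg(v)$, and in the equality case you deduce that $G-v$ is $2$-regular (a union of cycles) and then use the double-criticality of \emph{every} spoke $vx$, together with $\chi(G-v)\ge 3$, to show there is exactly one component and it is odd. Both routes are valid; the paper's is a little shorter because connectivity plus a single odd cycle already forces $G-v$ to be unicyclic, while yours trades that structural input for an elementary degree count and a somewhat longer but self-contained analysis of the $2$-regular case. (Two small remarks: your bound $n(G)\ge 6$ is true but requires Gallai's decomposition theorem to justify; the paper uses only $n(G)\ge 5$, which is immediate. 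Also, your phrasing that a second component of $G-v$ ``would\ldots leave an odd cycle in $G-v-x$'' should be read as: choosing $x$ in a component disjoint from an odd cycle leaves that odd cycle intact, contradicting bipartiteness of $G-v-x$.)
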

\begin{proof}
  Let $G$ denote a $4$-critical non-complete graph. According to
  Lemma~\ref{lemma:nonincidentDCedgesIn4crit}, $G$ contains no two
  non-incident double-critical edges, that is, every two
  double-critical edges of $G$ are incident. Then, either the
  double-critical edges of $G$ all share a common end-vertex or they
  induce a triangle. In the later case $G$ contains strictly less that
  $m(G)/2$ double-critical edges, since $n(G) \geq 5$ and $m(G) \geq
  3n(G)/2 > 6$. In the former case, let $v$ denote the common
  endvertex of the double-critical edges.

  Now, the number of double-critical edges is at most $\deg (v)$,
  which is at most $n(G) - 1$. Since $G$ is $4$-critical, it follows
  that $G - v$ is connected and $3$-chromatic. Hence $G-v$ is
  connected and contains an odd cycle, which implies $m(G-v) \geq
  n(G-v)$. Hence $m(G) = \deg (v) + m(G-v) \geq \deg (v) + n(G) - 1
  \geq 2 \deg (v)$, which implies the desired inequality. If the
  inequality is, in fact, an equality, then $\deg (v) = n(G)-1$ and
  $G$ is decomposable with $G-v$ an odd cycle of length $\geq 5$. The
  reverse implication is just a simple calculation. The reverse
  implication is just a simple calculation.
\end{proof}

\section{Connectivity of double-critical graphs}
\label{sec:connectivity}
\begin{proposition}
  Suppose $G$ is a non-complete double-critical $k$-chromatic graph
  with $k \geq 6$. Then no minimal separating set of $G$ can be
  partitioned into two disjoint sets $A$ and $B$ such that the induced
  graphs $G[A]$ and $G[B]$ are edge-empty and complete, respectively.
\label{prop:miniSepSetNoPartition}
\end{proposition}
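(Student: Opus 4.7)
The plan is by contradiction: assume $G$ has a minimal separating set $S = A \cup B$ of the stated form, and construct a $(k-1)$-colouring of $G$, contradicting $\chi(G) = k$. First, $A$ must be non-empty, for otherwise $S = B$ would be a clique cut-set of the critical graph $G$, contradicting Gallai's classical theorem that a critical graph has no clique cut-set (the same fact was used in the proof of Lemma~\ref{lemma:nonincidentDCedgesIn4crit}).

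Let $H_1, \dots, H_r$ (with $r \geq 2$) be the components of $G - S$ and, for each $i$, set $G_i := G[V(H_i) \cup S]$. Each $G_i$ is a proper subgraph of $G$, so criticality gives $\chi(G_i) \leq k - 1$; fix $(k-1)$-colourings $\varphi_i : V(G_i) \to [k-1]$. The strategy is to modify the $\varphi_i$ so that they all coincide on $S$; their union will then be a proper $(k-1)$-colouring of $G$.

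Alignment on $B$ is the easy step and is a direct adaptation of Gallai's clique-cut argument. Because $B$ is a clique, each $\varphi_i|_B$ is injective; moreover, by Proposition~\ref{prop:forbiddenCompleteKminusOne}, $|B| \leq k - 2$, so applying a suitable permutation of $[k-1]$ to each $\varphi_i$ makes all the restrictions $\varphi_i|_B$ equal to a single fixed injection $\sigma_B : B \to [k-1]$, and leaves at least one colour of $[k-1]$ outside the image of $\sigma_B$.

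The main obstacle is aligning the colourings on the independent part $A$. After the $B$-alignment, each $\varphi_i(a)$ lies in $[k-1] \setminus \sigma_B(N(a) \cap B)$, but the actual colour may depend on $i$. To force a common value on each $a \in A$, I would swap colours along a generalised Kempe chain at $a$ inside $G_i$ (Proposition~\ref{prop:CyclesFromGenKempeChains}); if the chain remains inside $V(H_i) \cup A$ the swap preserves $\varphi_i|_B = \sigma_B$ and changes $\varphi_i(a)$ as required. The delicate case is when such a chain must reach $B$: here I would use the minimality of $S$ to pick a neighbour $t$ of $a$ in $H_i$, apply the double-critical property to the edge $at$, and invoke Corollary~\ref{cor:atLeastKminusTwoCommonNeighbours} to obtain that in any $(k-2)$-colouring of $G-a-t$ every colour of $[k-2]$ appears in the common neighbourhood $B(at) \subseteq V(H_i) \cup B$. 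This rigidity, combined with the offending Kempe chain, should either allow rerouting the chain so it avoids $B$ or directly produce a $(k-1)$-colouring of $G$. Turning this rigid-common-neighbourhood/Kempe-chain interaction into a clean contradiction is the hardest part of the argument.
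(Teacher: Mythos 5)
Your opening observation is fine: if $A = \emptyset$ then $S = B$ is a clique cut-set, which cannot occur in a critical graph, so one may assume $A \neq \emptyset$; the paper makes the same (implicit) reduction. The alignment of the $\varphi_i$ on the clique $B$ is also sound, and this step is common to your proposal and the paper's proof. But the step you yourself flag as ``the hardest part'' --- forcing the $\varphi_i$ to agree on the independent part $A$ by rerouting generalised Kempe chains --- is exactly where the argument is missing, and it is not a routine gap. Nothing in your sketch guarantees that a Kempe swap at $a$ in $G_i$ can be confined to $V(H_i) \cup A$, nor that the rigidity of $B(at)$ under Corollary~\ref{cor:atLeastKminusTwoCommonNeighbours} lets you dodge $B$; you would still need to handle the interaction between several vertices of $A$ simultaneously. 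As written, the proposal is a plan, not a proof.

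The paper avoids the alignment-on-$A$ problem entirely, and this is the idea you are missing. Instead of fixing $(k-1)$-colourings of the full pieces $G_i = G[V(H_i) \cup S]$ via mere criticality, it deletes $A$ from the start. Picking $x \in A$, minimality of $S$ gives neighbours $y_1 \in V(H_1)$ and $y_2 \in V(H_2)$ of $x$; double-criticality applied to the edges $xy_2$ and $xy_1$ yields $(k-2)$-colourings of the \emph{smaller} pieces $G[V(H_1) \cup B]$ and $G[V(H_2) \cup B]$, which overlap only in the clique $B$. Aligning these on $B$ (the easy step you already have) and gluing produces a $(k-2)$-colouring of $G - A$, and since $A$ is independent, assigning every vertex of $A$ one fresh colour gives a $(k-1)$-colouring of $G$, the desired contradiction. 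So the contrast is: you colour the whole of $S$ in each piece and then must reconcile on $A$, whereas the paper colours only $B$ in each piece, relying on the double-critical hypothesis (rather than plain criticality) to get colourings that exclude $A$ from the outset, and recovers $A$ for free at the end. If you want to salvage your route, replace the Kempe-chain reconciliation with this ``delete $A$, colour $G[V(H_i) \cup B]$, glue on $B$, then add $A$ back with one new colour'' scheme.
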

\begin{proof}
  Suppose that some minimal separating set $S$ of $G$ can be
  partitioned into disjoint sets $A$ and $B$ such that $G[A]$ and
  $G[B]$ are edge-empty and complete, respectively. We may assume that
  $A$ is non-empty. Let $H_1$ denote a component of $G - S$, and let
  $H_2 := G - (S \cup V(H_1))$. Since $A$ is not empty, there is at
  least one vertex $x \in A$, and, by the minimality of the separating
  set $S$, this vertex $x$ has neighbours in both $V(H_1)$ and
  $V(H_2)$, say $x$ is adjacent to $y_1 \in V(H_1)$ and $y_2 \in
  V(H_2)$. Since $G$ is double-critical, the graph $G - x - y_2$ is
  $(k-2)$-colourable, in particular, there exists a $(k-2)$-colouring
  $\varphi_1$ of the subgraph $G_1 := G[V(H_1) \cup B]$. Similarly,
  there exists a $(k-2)$-colouring $\varphi_2$ of $G_2 := G[V(H_2)
  \cup B]$. The two graphs have precisely the vertices of $B$ in
  common, and the vertices of $B$ induce a complete graph in both
  $G_1$ and $G_2$. Thus, both $\varphi_1$ and $\varphi_2$ use exactly
  $|B|$ colours to colour the vertices of $B$, assigning each vertex a
  unique colour. By permuting the colours assigned by, say
  $\varphi_2$, to the vertices of $B$, we may assume $\varphi_1 (b) =
  \varphi_2(b)$ for every vertex $b \in B$. Now $\varphi_1$ and
  $\varphi_2$ can be combined into a $(k-2)$-colouring $\varphi$ of $G
  - A$.  This colouring $\varphi$ may be extended to a
  $(k-1)$-colouring of $G$ by assigning every vertex of the
  independent set $A$ the some new colour. This contradicts the fact
  that $G$ is $k$-chromatic, and so no minimal separating set $S$ as
  assumed can exist.
\end{proof}
Krusenstjerna-Hafstr{\o}m and Toft~\cite{MR634546} states that any
double-critical $k$-chromatic ($k \geq 5$) non-complete graph is
$5$-connected and $k+1$-edge-connected. In the following we prove that
any double-critical $k$-chromatic ($k \geq 6$) non-complete graph is
$6$-connected.
\begin{theorem}
  Every double-critical $k$-chromatic non-complete graph with $k \geq
  6$ is $6$-connected.
\label{th:doubleCriticalImpliesSixConnected}
\end{theorem}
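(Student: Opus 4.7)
The plan is to argue by contradiction. Suppose $G$ admits a separating set $S$ with $|S|=5$; since $G$ is already known to be $5$-connected by the theorem of Krusenstjerna-Hafstr{\o}m and Toft, we may take $S$ to be a minimal $5$-cut. Let $H_1, H_2$ denote two components of $G - S$, and set $G_i := G[V(H_i) \cup S]$, so that $G_1 \cup G_2 = G$ and $G_1 \cap G_2 = G[S]$. Each $G_i$ is a proper subgraph of the critical graph $G$, hence $(k-1)$-colourable. The overall strategy is to produce $(k-1)$-colourings $\varphi_1$ of $G_1$ and $\varphi_2$ of $G_2$ which induce the \emph{same} partition of $S$ into colour classes; after permuting colours, these combine into a $(k-1)$-colouring of $G$, contradicting $\chi(G) = k$.

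First I would extract structural constraints on $S$. By Proposition~\ref{prop:miniSepSetNoPartition}, $S$ is not the disjoint union of an independent set and a clique (either possibly empty), so $G[S]$ is not a split graph; on five vertices this forces $G[S]$ to contain an induced $C_5$, $C_4$, or $2K_2$. Using $\delta(G) \geq k + 1 \geq 7$ from Proposition~\ref{prop:minimumDegree}, together with the forbidden subgraph $K_{k-1}$ from Proposition~\ref{prop:forbiddenCompleteKminusOne}, I would show that each $H_i$ contains enough vertices to carry an edge. This enables the use of double-criticalness: for any edge $uv$ in $H_{3-i}$, the graph $G - u - v$ is $(k-2)$-colourable, and its restriction is a $(k-2)$-colouring of $G_i$ --- a stronger statement than mere $(k-1)$-colourability.

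The heart of the argument is aligning the traces $\varphi_1|_S$ and $\varphi_2|_S$. I would proceed by case analysis according to whether $G[S]$ contains induced $C_5$, $C_4$, or $2K_2$, using two principal tools: generalised Kempe-chain recolourings (Proposition~\ref{prop:CyclesFromGenKempeChains}) to manufacture alternative $(k-1)$-colourings of each $G_i$ with varying traces on $S$, and Corollary~\ref{cor:atLeastKminusTwoCommonNeighbours} together with Proposition~\ref{prop:structureOfGx} to pin down how a $(k-2)$-colouring of $G - u - v$ distributes the $k-2$ colours over the common neighbourhood $B(uv)$, thereby restricting which partitions of $S$ can arise.

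The main obstacle I anticipate is the $2K_2$ case, where $G[S]$ offers only an induced matching as leverage. There I would apply double-criticalness to the two edges of the induced $2K_2$ themselves and exploit the forced appearance of every colour of $[k-2]$ on each common neighbourhood to show that the families of achievable traces on the two sides of $S$ must overlap. A secondary difficulty is bookkeeping the colour counts so that the combined colouring uses at most $k-1$ colours in total; for this, the minimum degree $\delta(G) \geq k + 1$ and the bound $\alpha_x \geq 2$ of Proposition~\ref{prop:upperBoundOnAlphaGx} will be needed to supply the required counting margin.
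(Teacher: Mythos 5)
Your decomposition is the paper's: take a minimal separating set $S$, use $\delta(G)\geq k+1\geq 7$ to find an edge inside each component $H_i$, invoke double-criticalness to get a $(k-2)$-colouring $\varphi_i$ of $G_i := G[V(H_i)\cup S]$, and aim to align $\varphi_1|_S$ and $\varphi_2|_S$. You also correctly note that Proposition~\ref{prop:miniSepSetNoPartition} forbids $G[S]$ from being a split graph. Up to that point you match the paper. (One small difference: the paper proves $6$-connectivity from scratch, so it also handles $|S|\in\{3,4\}$ rather than invoking the earlier $5$-connectivity result; but that is a cosmetic change, and using the known $5$-connectivity is legitimate.)

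The gap is in the toolkit you propose for the alignment step. You want to case-analyse on which of the forbidden split-graph configurations $C_4$, $C_5$, $2K_2$ occurs inside $G[S]$, and to manufacture compatible traces using generalised Kempe chains (Proposition~\ref{prop:CyclesFromGenKempeChains}), the common-neighbourhood structure (Corollary~\ref{cor:atLeastKminusTwoCommonNeighbours}), and Proposition~\ref{prop:structureOfGx}. None of these is the device that actually closes the argument, and at least one is inapplicable: Proposition~\ref{prop:structureOfGx} concerns only vertices of degree exactly $k+1$, which need not exist, and Proposition~\ref{prop:CyclesFromGenKempeChains} produces coloured $(x,y)$-paths in $G$, not alternative $(k-1)$-colourings of $G_i$ with a prescribed behaviour on $S$. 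The missing idea is elementary and you already hold the ingredient without using it: each $\varphi_i$ is a \emph{$(k-2)$-colouring}, so the colour $k-1$ is available as a genuine spare. The paper exploits this by locally recolouring parts of $S$ with the new colour $k-1$, splitting or merging colour classes on $S$ until the two induced partitions of $S$ coincide (up to permutation), and the case analysis is driven by the pair $\bigl(|\varphi_1(S)|, |\varphi_2(S)|\bigr)$ and the exact partitions of $S$ they induce, not by which induced non-split configuration sits inside $G[S]$. Also observe that by Proposition~\ref{prop:miniSepSetNoPartition} no $\varphi_i$ can use a single colour on $S$ or put too many vertices in one class, which is what bounds the case analysis. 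Your worry that the $2K_2$ case is the hard one is a red herring: in the paper's scheme all cases are handled uniformly by the spare-colour merge/split, and the structure of $G[S]$ enters only through the weak constraints just mentioned. As written, your proposal does not supply a mechanism that would produce the required compatible $(k-1)$-colourings, so the argument does not close.
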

\begin{proof}
  Recall, that any double-critical graph, by definition, is connected.
  Thus, since $G$ is not complete, there exists some subset $U
  \subseteq V(G)$ such that $G - U$ is disconnected. Let $S$ denote a
  minimal separating set of $G$. We show $|S| \geq 6$. If
  $|S| \leq 3$, then $S$ can be partitioned into two disjoint subset
  $A$ and $B$ such that the induced graphs $G[A]$ and $G[B]$ are
  edge-empty and complete, respectively, and, thus, we have a
  contradiction by Proposition~\ref{prop:miniSepSetNoPartition}.
  Suppose $|S| \geq 4$, and let $H_1$ and $H_2$ denote disjoint
  non-empty subgraphs of $G - S$ such that $G-S = H_1 \cup H_2$.

  If $|S| \leq 5$, then each vertex $v$ of $V(H_1)$ has at most five
  neighbours in $S$ and so $v$ must have at least two neighbours in
  $V(H_1)$, since $\delta (G) \geq k+1 \geq 7$. In particular, there
  is at least one edge $u_1 u_2$ in $H_1$, and so $G - u_1 - u_2$ is
  $(k-2)$-colourable. This implies that the subgraph $G_2 := G - H_1$
  of $G - u_1 - u_2$ is $(k-2)$-colourable. Let $\varphi_2$ denote a
  $(k-2)$-colouring of $G_2$. A similar argument shows that $G_1 := G
  - H_2$ is $(k-2)$-colourable. Let $\varphi_1$ denote a $(k-2)$-colouring of
  $G_1$. If $\varphi_1$ or $\varphi_2$ applies just one colour to the vertices of
  $S$, then $S$ is an independent set of $G$, which contradicts
  Proposition~\ref{prop:miniSepSetNoPartition}.  Thus, we may assume
  that both $\varphi_1$ and $\varphi_2$ applies at least two colours to the
  vertices of $S$. Let $|\varphi_i(S)|$ denote the number of colours applied
  by $\varphi_i$ ($i=1,2$) to the vertices of $S$. By symmetry, we may
  assume $|\varphi_1(S)| \geq |\varphi_2(S)| \geq 2$.

  Moreover, if $|\varphi_1 (S)| = |\varphi_2(S)| = |S|$, then,
  clearly, the colours applied by say $\varphi_1$ may be permuted such
  that $\varphi_1(s) = \varphi_2(s)$ for every $s \in S$ and so
  $\varphi_1$ and $\varphi_2$ may be combined into a $(k-2)$-coloring
  of $G$, a contradiction. Thus, $|\varphi_1 (S)| = |S|$ implies
  $|\varphi_2(S)| < |S|$.

  In general, we redefine the $(k-2)$-colourings $\varphi_1$ and
  $\varphi_2$ into $(k-1)$-colourings of $G_1$ and $G_2$,
  respectively, such that, after a suitable permutation of the colours
  of say $\varphi_1$, $\varphi_1(s) = \varphi_2(s)$ for every vertex
  $s \in S$. Hereafter a proper $(k-1)$-colouring of $G$ may be
  defined as $\varphi(v) = \varphi_1(v)$ for every $v \in V(G_1)$ and
  $\varphi(v) = \varphi_2 (v)$ for every $v \in V(G) \backslash
  V(G_1)$, which contradicts the fact that $G$ is $k$-chromatic. In
  the following cases we only state the appropriate redefinition of
  $\varphi_1$ and $\varphi_2$.

  Suppose that $|S| = 4$, say $S = \{ v_1, v_2, v_3, v_4 \}$.  We
  consider several cases depending on the values of
  $|\varphi_1(S)|$ and $|\varphi_2(S)|$. If $|\varphi_i(S)| = 2$ for some $i
  \in \{ 1,2 \}$, then $\varphi_i$ must apply both colours twice on vertices
  of $S$ (by Proposition~\ref{prop:miniSepSetNoPartition}).
\begin{itemize}
\item[1)] Suppose that $|\varphi_1(S)| = 4$.
\item[1.1)] Suppose that $|\varphi_2(S)| = 3$. In this case
  $\varphi_2$ uses the same colour at two vertices of $S$, say
  $\varphi_2(v_1) = \varphi_2(v_2)$. We simply redefine $\varphi_2$
  such that $\varphi_2(v_1) = k-1$. Now both $\varphi_1$ and
  $\varphi_2$ applies four distinct colours to the vertices of $S$ and
  so they may be combined into a $(k-1)$-colouring of $G$, a
  contradiction.
\item[1.2)] Suppose that $|\varphi_2(S)| = 2$, say $\varphi_2 (v_1) =
  \varphi_2 (v_2)$ and $\varphi_2(v_3) = \varphi_2(v_4)$. This, in
  particular, implies $v_1v_2 \notin E(G)$, and so $\varphi_1$ may be
  redefined such that $\varphi_1 (v_1) = \varphi_1 (v_2) = k-1$.
  Moreover, $\varphi_2$ is redefined such that $\varphi_2(v_4) = k-1$.
\item[2)] Suppose that $|\varphi_1(S)| = 3$, say $\varphi_1 (v_1) =
  1$, $\varphi_1 (v_2) = 2$ and $\varphi_1(v_3) = \varphi_1(v_4) = 3$.
\item[2.1)] Suppose that $|\varphi_2 (S)| = 3$, say $\varphi_2 (x) =
  \varphi_2 (y)$ for two distinct vertices $x,y \in S$. Redefine
  $\varphi_1$ and $\varphi_2$ such that $\varphi_1 (v_4) = k-1$ and
  $\varphi_2(x) = k-1$.
\item[2.2)] Suppose that $|\varphi_2 (S)| = 2$. If $\varphi_2(v_1) =
  \varphi_2(v_2)$ and $\varphi_2 (v_3) = \varphi_2 (v_4)$, then the
  desired $(k-1)$-colourings are obtained by redefining $\varphi_2$
  such that $\varphi_2 (v_2) = k-1$. If $\varphi_2(v_2) =
  \varphi_2(v_3)$ and $\varphi_2 (v_4) = \varphi_2 (v_1)$, then the
  desired $(k-1)$-colourings are obtained by redefining $\varphi_2$
  such that $\varphi_2 (v_3) = \varphi_2 (v_4 ) = k-1$.
\item[3)] Suppose that $|\varphi_1(S)|=2$, which implies
  $|\varphi_2(S)|=2$. We may assume $\varphi_1 (v_1) = \varphi_1
  (v_2)$ and $\varphi_1 (v_3) = \varphi_1 (v_4)$, in particular, $v_1
  v_2 \notin E(G)$. If $\varphi_2 (v_1) = \varphi_2 (v_2)$ and
  $\varphi_2 (v_3) = \varphi_2 (v_4)$, then, obviously, $\varphi_1$
  and $\varphi_2$ may be combined into a $(k-2)$-colouring of $G$, a
  contradiction. Thus, we may assume that $\varphi_2 (v_2) =
  \varphi_2(v_3)$ and $\varphi_2(v_4)= \varphi_2 (v_1)$. In this case
  we redefine both $\varphi_1$ and $\varphi_2$ such that $\varphi_1
  (v_4) = k-1$, and, since $v_1 v_2 \notin E(G)$, $\varphi_2(v_1) =
  \varphi_2 (v_2) = k-1$.
\end{itemize}
This completes the case $|S|=4$. Suppose $|S| = 5$, say $S = \{ v_1,
v_2, v_3, v_4, v_5 \}$. According to
Proposition~\ref{prop:miniSepSetNoPartition}, neither $\varphi_1$ nor
$\varphi_2$ uses the same colour for more than three vertices. Suppose
that one of the colourings $\varphi_1$ or $\varphi_2$, say
$\varphi_2$, applies the same colour to three vertices of $S$, say
$\varphi_2 (v_3) = \varphi_2 (v_4) = \varphi_2 (v_5)$. Now $\{ v_3,
v_4, v_5 \}$ is an independent set.  If (i) $\varphi_1 (v_1) =
\varphi_1(v_2)$ and $\varphi_2 (v_1) = \varphi_2(v_2)$ or (ii)
$\varphi_1 (v_1) \neq \varphi_1(v_2)$ and $\varphi_2 (v_1) \neq
\varphi_2(v_2)$, then we redefine $\varphi_1$ such that $\varphi_1
(v_3) = \varphi_1 (v_4) = \varphi_1 (v_5) = k-1$, and so $\varphi_1$
and $\varphi_2$ may, after a suitable permutation of the colours of
say $\varphi_1$, be combined into a $(k-1)$-colouring of $G$.
Otherwise, if $\varphi_1 (v_1) \neq \varphi_1(v_2)$ and $\varphi_2
(v_1) = \varphi_2 (v_2)$, then we redefine both $\varphi_1$ and
$\varphi_2$ such that $\varphi_1 (v_3) = \varphi_1 (v_4) = \varphi_1
(v_5) = k-1$ and $\varphi_2 (v_2) = k-1$. If $\varphi_1 (v_1) =
\varphi_1(v_2)$ and $\varphi_2 (v_1) \neq \varphi_2 (v_2)$, then we
redefine both $\varphi_1$ and $\varphi_2$ such that $\varphi_1 (v_3) =
\varphi_1 (v_4) = \varphi_1 (v_5) = k-1$ and $\varphi_2 (v_1) =
\varphi_2 (v_2) = k-1$. In both cases $\varphi_1$ and $\varphi_2$ may
be combined into a $(k-1)$-colouring of $G$ Thus, we may assume that
neither $\varphi_1$ nor $\varphi_2$ applies the same colour to three
or more vertices of $S$, in particular, $|\varphi_i (S)| \geq 3$ for both $i
\in \{ 1,2 \}$. Again, we may assume $|\varphi_1 (S)| \geq
|\varphi_2(S)|$.

\begin{itemize}
\item[a)] Suppose that $|\varphi_1 (S)| = 5$.
\item[a.1)] Suppose that $|\varphi_2(S)| = 4$ with say $\varphi_2
  (v_4) = \varphi_2 (v_5)$. In this case $v_4v_5 \notin E(G)$ and so
  we redefine $\varphi_1$ such that $\varphi_1 (v_4) = \varphi_1 (v_5)
  = k-1$.
\item[a.2)] Suppose that $|\varphi_2 (S)| = 3$. Since $\varphi_2$
  cannot assign the same colour to three or more vertices of $S$, we
  may assume $\varphi_2 (v_2) = \varphi_2 (v_3)$ and $\varphi_2(v_4) =
  \varphi_2(v_5)$. In this case $v_4v_5 \notin E(G)$, and so we
  redefine $\varphi_1$ and $\varphi_2$ such that $\varphi_1 (v_4) =
  \varphi_1 (v_5) = k-1$ and $\varphi_2 (v_3) = k-1$.
\item[b)] Suppose $|\varphi_1 (S)| = 4$, say $\varphi_1(v_4) = \varphi_1 (v_5)$.
\item[b.1)] Suppose $|\varphi_2(S)| = 4$ with $\varphi_2(x) =
  \varphi_2 (y)$ for two distinct vertices $x,y \in S$. In this case
  we redefine $\varphi_1$ and $\varphi_2$ such that $\varphi_1 (v_5 )
  = k-1$ and $\varphi_2(y) = k-1$.
\item[b.2)] Suppose $|\varphi_2 (S)| = 3$. In this case we distinguish
  between two subcases depending on the number of colours $\varphi_2$
  applies to the vertices of the set $\{ v_1, v_2, v_3 \}$. As noted
  earlier, we must have $|\varphi_2 ( \{ v_1, v_2, v_3 \} ) | \geq 2$.
  If $|\varphi_2 ( \{ v_1, v_2, v_3 \} ) | = 3$, then we redefine
  $\varphi_2$ such that $\varphi_2 (v_4) = \varphi_2 (v_5) = k-1$.
  Otherwise, if $|\varphi_2 ( \{ v_1, v_2, v_3 \} ) | = 2$ with say
  $\varphi_2(v_2) = \varphi_2(v_3)$. Now $v_2 v_3, v_4 v_5 \notin
  E(G)$ and so we redefine $\varphi_1$ and $\varphi_2$ such that
  $\varphi_1 (v_2) = \varphi_1 (v_3) = k-1$ and $\varphi_2(v_4) =
  \varphi_2 (v_5) = k-1$.
\item[c)] Suppose that $|\varphi_1 (S)|=3$, say $\varphi_1 (v_2) =
  \varphi_1 (v_3)$ and $\varphi_1(v_4) = \varphi_1 (v_5)$.  In this
  case we must have $|\varphi_2(S)|=3$. As noted earlier, $\varphi_2$
  does not assign the same colour to three vertices of $S$, and so we
  may assume $\varphi_2$ applies the colours $1,2$ and $3$ to the
  vertices of $S$ and that only one vertex of $S$ is assigned the
  colour $1$ while two pairs of vertices of given the colours $2$ and
  $3$, respectively.  We distinguish between four subcases depending
  on which vertex of $S$ is assigned the colour $1$ by $\varphi_2$ and
  and the number of colours $\varphi_2$ applies to the vertices of the
  two sets $\{ v_2, v_3 \}$ and $\{ v_4, v_5 \}$. We may assume
  $|\varphi_2 ( \{ v_2, v_3 \})| \geq |\varphi_2( \{ v_4, v_5 \} )|$.
\item[c.1)] If $|\varphi_2 ( \{ v_2, v_3 \})| = |\varphi_2( \{ v_4,
  v_5 \} )|= 1$, then, clearly, $\varphi_1$ and $\varphi_2$ may be
  combined into a $(k-2)$-colouring of $G$, a contradiction.
\item[c.2)] Suppose $|\varphi_2 ( \{ v_2, v_3 \})| = 2$, $|\varphi_2(
  \{ v_4, v_5 \} )| = 2$ and $\varphi_2 (v_1) = 1$. Suppose that
  $\varphi_2$ assigns the colour $2$ to the two distinct vertices $x,
  y \in S \backslash \{ v_1 \}$. Now we redefine $\varphi_1$ and
  $\varphi_2$ such that $\varphi_1(x) = \varphi_1 (y) = k-1$ and
  $\varphi_2 (z) = k-1$ for some vertex $z \in S \backslash \{ v_1, x,
  y \}$.
\item[c.3)] Suppose $|\varphi_2 ( \{ v_2, v_3 \})| = 2$, $|\varphi_2(
  \{ v_4, v_5 \} )| = 2$ and $\varphi_2 (v_1) \neq 1$, say $\varphi_2
  (v_5) = 1$. In this case there is a vertex $x \in \{ v_2, v_3 \}$
  such that $\varphi_2 (x) = \varphi_2 (v_4)$. Now we redefine
  $\varphi_1$ and $\varphi_2$ such that $\varphi_1 (x) = \varphi_1
  (v_4) = k-1$ and $\varphi_2 (v_1) = k-1$.
\item[c.4)] If $|\varphi_2 ( \{ v_2, v_3 \})| = 2$, $|\varphi_2( \{
  v_4, v_5 \} )| = 1$, then we redefine $\varphi_2$ such that
  $\varphi_2 (v_2) = \varphi_2 (v_3) = k-1$.
\end{itemize}
\end{proof}
\section{Double-critical $6$-chromatic graphs}
\label{sec:dc6chrom}
In this section we prove, without use of the Four Colour Theorem, that
any double-critical $6$-chromatic graph is contractible to $K_6$.
\begin{theorem}
  Every double-critical $6$-chromatic graph $G$ contains $K_6$ as a minor.
\label{th:DoubleCriticalChrom6}
\end{theorem}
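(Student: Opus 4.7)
The plan is to derive the $K_6$-minor by an edge-count argument: I will upgrade the lower bound on $\delta(G)$ from the known $k+1 = 7$ to $8$, and then apply Mader's classical bound that $m \geq 4n-9$ forces a $K_6$-minor.

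The case $G = K_6$ is immediate, so assume $G$ is non-complete. Proposition~\ref{prop:minimumDegree} yields $\delta(G) \geq 7$, and the discussion following Proposition~\ref{prop:decomposable} gives $n(G) \geq 12$. The crux is to rule out any vertex $x$ of degree exactly $7$. Since $n(G) \geq 12 > 8$, such an $x$ cannot be joined to all other vertices, so Proposition~\ref{prop:kMinusTwoColourableNeighbourGraphGx} gives $\chi(G_x) \leq k-3 = 3$, while Proposition~\ref{prop:structureOfGx} forces $\overline{G_x}$ to be a disjoint union of isolated vertices and cycles of length at least $5$ on exactly $7$ vertices. The only three possibilities are $\overline{G_x} \in \{\,C_7,\ C_6 + K_1,\ C_5 + 2K_1\,\}$.

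The main technical step---a short case analysis that I expect to be the only place requiring genuine work---is to verify that $\chi(G_x) \geq 4$ in each of these three situations, contradicting the bound $\chi(G_x) \leq 3$. Concretely: for $\overline{G_x} = C_7$ the identity $\alpha(\overline{C_7}) = 2$ forces $\chi(G_x) \geq \lceil 7/2 \rceil = 4$; for $\overline{G_x} = C_6 + K_1$ the graph $\overline{C_6}$ is the triangular prism with $\chi = 3$, and the isolated component of $\overline{G_x}$ corresponds to a universal vertex of $G_x$, so $\chi(G_x) = 4$; for $\overline{G_x} = C_5 + 2K_1$ we have $G_x = C_5 \vee K_2$ with the $K_2$ joined to all of $C_5$, so $\chi(G_x) \geq \chi(C_5) + 2 = 5$.

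With $\delta(G) \geq 8$ in hand, $m(G) \geq 4 n(G) > 4 n(G) - 9$, and Mader's theorem that every graph with at least $4n - 9$ edges contains $K_6$ as a minor concludes the argument. The remaining work is essentially bookkeeping for the three subcases in the previous paragraph; the Mader step itself is a single invocation of a standard result, and Győri's theorem does not seem to be needed for this case (in contrast with the $k=7$ setting).
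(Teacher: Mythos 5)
Your proof is correct, and it takes a genuinely different route from the paper's for the degree-$7$ case. The shared skeleton ($G=K_6$ is trivial; $\delta(G)\geq 7$; the Mader threshold $m\geq 4n-9$ handles $\delta(G)\geq 8$; Proposition~\ref{prop:structureOfGx} reduces a degree-$7$ vertex $x$ to $\overline{G_x}\in\{C_5+2K_1,\ C_6+K_1,\ C_7\}$) is the same. The paper then finds, in the first two cases, a $K_4$ inside $G_x$ (giving $K_5\leq G$, contradicting Proposition~\ref{prop:forbiddenCompleteKminusOne}), while for $\overline{G_x}=C_7$ it does \emph{not} derive a contradiction but instead contracts two chords of $G_x$ to produce a $K_5$ minor and hence $K_6\leq_m G$. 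You instead invoke Proposition~\ref{prop:kMinusTwoColourableNeighbourGraphGx}, which gives $\chi(G_x)\leq k-3=3$ since $x$ cannot be universal, and then check $\chi(\overline{C_7})\geq\lceil 7/\alpha(\overline{C_7})\rceil=\lceil 7/2\rceil=4$, $\chi(\overline{C_6}\vee K_1)=\chi(\text{prism})+1=4$, and $\chi(C_5\vee K_2)=5$, each exceeding $3$. (All three arithmetic checks are correct: $\alpha(\overline{C_7})=\omega(C_7)=2$; $\overline{C_6}\cong K_3\,\square\,K_2$ has $\chi=3$; $C_5$ is self-complementary with $\chi=3$.) Your route buys a uniform contradiction in all three subcases, proving outright that $\delta(G)\geq 8$ for every non-complete double-critical $6$-chromatic graph — a fact the paper does not state, and which incidentally renders the paper's Corollary~\ref{cor:hamiltonianComplement} vacuously true. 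One minor simplification: you don't need the $n(G)\geq 12$ bound — $\delta(G)\geq 7$ together with non-completeness already forces $n(G)\geq 9$, which is enough to ensure a degree-$7$ vertex is non-universal.
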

\begin{proof}
  If $G$ is a the complete $6$-graph, then we are done. Hence we may
  assume that $G$ is not the complete $6$-graph. Now, according to
  Proposition~\ref{prop:minimumDegree}, $\delta (G) \geq 7$. Firstly,
  suppose that $\delta (G) \geq 8$. Then $m(G) = \frac{1}{2} \sum_{v
    \in V(G)} \deg (v) \geq 4n(G) > 4n(G) - 9$.
  Gy{\H{o}}ri~\cite{MR652887} and Mader~\cite{MR0229550} proved that
  any graph $H$ with $m(H) \geq 4n(G) - 9$ is contractible to $K_6$,
  which implies the desired result. Secondly, suppose that $G$
  contains a vertex, say $x$, of degree $7$. Let $y_i$ $(i \in [7])$
  denote the neighbours of $x$.  Now, according to
  Proposition~\ref{prop:structureOfGx}, the complement of the induced
  subgraph $G_x$ consists of isolated vertices and cycles (at least
  one) of length at least five. Since $n(G_x) = 7$, the complement
  $\overline{G_x}$ must contain exactly one cycle $C_\ell$.  We
  consider three cases depending on the length of $C_\ell$.
\begin{itemize}
\item[(i)] Suppose $\ell = 5$, say $C_\ell = \{ y_1, y_2, y_3, y_4,
  y_5 \}$. Now $\{y_1, y_3, y_6, y_7 \}$ induces a $K_4$, and so
  $\{y_1, y_3, y_6, y_7, x \}$ induces a $K_5$, which contradicts
  Proposition~\ref{prop:forbiddenCompleteKminusOne}.
\item[(ii)] Suppose $\ell = 6$, say $C_\ell = \{ y_1, y_2, y_3, y_4,
  y_5, y_6 \}$. In this case, $\{y_1, y_3, y_5, y_7, x \}$ induces a
  $K_5$, again, we obtain a contradiction.
\item[(iii)] Finally, $\ell = 7$, say $C_\ell = \{ y_1, y_2, y_3, y_4,
  y_5, y_6, y_7 \}$. Now by contracting the edges $y_2y_5$ and
  $y_4y_7$ of $G_x$ into two distinct vertices a complete $5$-graph is
  obtained, as is readily verified. Since, by definition, the vertex
  $x$ is adjacent to every vertex of $V(G_x)$, it follows that $G$ is
  contractible to $K_6$.
\end{itemize}
\end{proof}
The proof of Theorem~\ref{th:DoubleCriticalChrom6} implies the
following result.
\begin{corollary}
  Every double-critical $6$-chromatic graph $G$ with $\delta (G) = 7$
  has the property that for every vertex $x \in V(G)$ with $\deg (x) =
  7$, the complement $\overline{G_x}$ is a $7$-cycle.
\label{cor:hamiltonianComplement}
\end{corollary}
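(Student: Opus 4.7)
The plan is to observe that the corollary is essentially a byproduct of the case analysis already carried out in the proof of Theorem~\ref{th:DoubleCriticalChrom6}, so I would simply recycle that analysis. First, I would fix a vertex $x \in V(G)$ with $\deg(x) = 7$ and apply Proposition~\ref{prop:structureOfGx} to conclude that $\overline{G_x}$ is a disjoint union of some (possibly zero) isolated vertices together with at least one cycle, each cycle having length at least $5$. Since $|V(G_x)| = 7$, there is room for exactly one cycle $C_\ell$, and the three exhaustive possibilities are $\ell = 5$ with two isolated vertices, $\ell = 6$ with one isolated vertex, or $\ell = 7$ with no isolated vertices.

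Next, I would rule out the first two configurations exactly as in cases (i) and (ii) of the proof of Theorem~\ref{th:DoubleCriticalChrom6}. If $\ell = 5$, then two non-consecutive vertices of the $5$-cycle together with the two isolated vertices of $\overline{G_x}$ form an independent set in $\overline{G_x}$, i.e.\ a clique of size $4$ in $G_x$; adjoining $x$ yields a $K_5$ in $G$, contradicting Proposition~\ref{prop:forbiddenCompleteKminusOne} (since $k-1 = 5$). If $\ell = 6$, then three alternating vertices of the $6$-cycle together with the one remaining isolated vertex form a clique of size $4$ in $G_x$, and again $\{x\}$ together with these four vertices induces a $K_5$ in $G$, contradicting Proposition~\ref{prop:forbiddenCompleteKminusOne}.

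The only remaining possibility is $\ell = 7$, in which case $\overline{G_x}$ consists of a single $7$-cycle and no isolated vertices, which is precisely the conclusion sought. I do not anticipate a serious obstacle, since the entire argument amounts to extracting what is implicit in the proof of Theorem~\ref{th:DoubleCriticalChrom6}; the only care needed is to verify that each of the forbidden configurations truly produces an independent set of size four in $\overline{G_x}$, which is a routine check on the cycle structure.
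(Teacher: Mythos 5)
Your proof is correct and is exactly the route the paper intends: the paper gives no separate argument for this corollary, simply noting that it falls out of the case analysis in the proof of Theorem~\ref{th:DoubleCriticalChrom6}, which is precisely what you reconstruct (rule out $\ell = 5$ and $\ell = 6$ via the induced $K_5$ contradicting Proposition~\ref{prop:forbiddenCompleteKminusOne}, leaving only the $7$-cycle).
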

\section{Double-critical $7$-chromatic  graphs}
\label{sec:dc7chrom}
Let $G$ denote a double-critical non-complete $7$-chromatic
graph. Recall, that given a vertex $x \in V(G)$, we let $G_x$ denote
the induced graph $G[N(x)]$ and $\alpha_x := \alpha (G_x)$. The
following corollary is a direct consequence of
Proposition~\ref{prop:kMinusTwoColourableNeighbourGraphGx}.
\begin{corollary}
  For any vertex $x$ of $G$ not joined to all other vertices, $\chi (G_x) \leq
  4$.
\label{cor:FourColourableNeighbourGraphGx}
\end{corollary}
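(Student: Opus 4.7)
The plan is essentially just to invoke Proposition~\ref{prop:kMinusTwoColourableNeighbourGraphGx} with the specific value $k = 7$. That proposition says that for any non-complete double-critical $k$-chromatic graph $G$ and any vertex $x$ not joined to all other vertices, one has $\chi(G_x) \leq k-3$. Since here $G$ is $7$-chromatic, substituting $k = 7$ gives $\chi(G_x) \leq 4$, which is exactly the conclusion claimed.

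To turn this into a self-contained proof, I would first remind the reader that $G$ is assumed non-complete, so the hypothesis of Proposition~\ref{prop:kMinusTwoColourableNeighbourGraphGx} is satisfied. Then I would simply set $k=7$ in the proposition to obtain $\chi(G_x) \leq k - 3 = 4$. There is no obstacle here at all: the statement is labeled as a corollary precisely because it is a numerical specialization of an already-proved proposition, and the substance of the argument (producing an edge $zv$ in $C(xy)$ and using double-criticalness to $(k-2)$-colour $G[N[x]]$) is carried out once and for all in the proof of Proposition~\ref{prop:kMinusTwoColourableNeighbourGraphGx}. Hence the proposal is to write one or two sentences that explicitly quote the proposition, note that $k=7$ in the present section, and conclude $\chi(G_x) \leq 4$.
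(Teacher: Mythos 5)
Your proposal is correct and is exactly the paper's own reasoning: the paper explicitly introduces the corollary as ``a direct consequence of Proposition~\ref{prop:kMinusTwoColourableNeighbourGraphGx}'' and offers no further proof, since in Section~\ref{sec:dc7chrom} the graph $G$ is a non-complete double-critical $7$-chromatic graph, so substituting $k=7$ gives $\chi(G_x)\leq k-3=4$.
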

\begin{proposition}
  For any vertex $x$ of $G$ of degree $9$, $\alpha_x = 3$.
\label{prop:DoubleCriticalContractionCriticalChrom7Alpha3}
\end{proposition}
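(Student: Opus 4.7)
The plan is to sandwich $\alpha_x$ between the bounds $\alpha_x \le 3$ and $\alpha_x \ge 3$, both derivable from Section~\ref{sec:basic}. For the upper bound, I would apply Proposition~\ref{prop:upperBoundOnAlphaGx} with $k=7$, which yields directly
\[
\alpha_x \;\le\; \deg_G(x) - (k-1) \;=\; 9 - 6 \;=\; 3.
\]

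For the lower bound I would invoke Corollary~\ref{cor:FourColourableNeighbourGraphGx}: under the hypothesis that $x$ is not joined to every other vertex of $G$, it gives $\chi(G_x) \le 4$. Since any graph $H$ satisfies $n(H) \le \chi(H)\cdot\alpha(H)$ (each colour class in a proper colouring is independent), the nine-vertex graph $G_x$ then obeys $9 \le 4\alpha_x$, so $\alpha_x \ge 3$. Combined with the upper bound this forces $\alpha_x = 3$.

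The only obstacle is verifying the hypothesis of Corollary~\ref{cor:FourColourableNeighbourGraphGx}, i.e.\ ruling out the degenerate possibility that $x$ is joined to all other vertices of $G$ (which would force $n(G)=10$). This is where I expect to spend the most care; the argument should parallel the discussion in Section~\ref{sec:decomposable} for $k=6$. Gallai's Theorem forces any $7$-critical graph on at most $2\cdot 7 - 2 = 12$ vertices to be decomposable; Proposition~\ref{prop:decomposable} then forces each decomposition part to be double-critical; and a short case analysis on the chromatic numbers of the two parts---using that the only double-critical $k$-chromatic graphs for $k \le 5$ are the complete graphs $K_k$, together with Theorem~\ref{th:DoubleCriticalChrom6} and the $n \ge 12$ bound for non-complete double-critical $6$-chromatic graphs---collapses $G$ to $K_7$, contradicting the assumption that $G$ is non-complete. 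Hence $n(G) \ge 13$, so $\deg_G(x) = 9 < n(G)-1$, and Corollary~\ref{cor:FourColourableNeighbourGraphGx} applies as required, completing the proof.
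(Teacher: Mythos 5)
Your proof takes the same sandwich approach as the paper: upper bound $\alpha_x \le 3$ from Proposition~\ref{prop:upperBoundOnAlphaGx}, lower bound $\alpha_x \ge \lceil 9/4 \rceil = 3$ from $n(G_x) \le \chi(G_x)\,\alpha_x$ together with Corollary~\ref{cor:FourColourableNeighbourGraphGx}. Your extra verification that $x$ is not joined to all other vertices (so that the Corollary applies) is a gap the paper silently skips and is worth filling, though Gallai's theorem is heavier machinery than needed --- if $x$ were universal then $G$ decomposes into $\{x\}$ and $G-x$, so by Proposition~\ref{prop:decomposable} $G-x$ would be a double-critical $6$-chromatic graph on only $9$ vertices, forcing $G-x=K_6$ and hence $G=K_7$, a contradiction.
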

\begin{proof} It follows from
  Proposition~\ref{prop:upperBoundOnAlphaGx}, that $\alpha_x$ is at
  most $3$. Since $\chi(G_x) \cdot \alpha_x \geq n(G_x) = 9$, it
  follows from Corollary~\ref{cor:FourColourableNeighbourGraphGx},
  that $\alpha_x \geq 9/\chi(G_x) \geq 9/4$, which implies $\alpha_x
  \geq 3$. Thus, $\alpha_x = 3$.
\end{proof}
\begin{proposition}
  If $x$ is a vertex of degree $9$ in $G$, then the complement
  $\overline{G_x}$ does not contain a $K_4^-$ as a subgraph.
\label{th:ForbiddenK4missingOneEdge}
\end{proposition}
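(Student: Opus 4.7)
The plan is to derive a contradiction by choosing the right vertex of the purported $K_4^-$ to play the role of $y$ in the $A(xy)$ machinery, so that $A(xy)$ is forced to contain an isolated vertex, contradicting Proposition~\ref{prop:noIsolatedInAxy}.

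Suppose for a contradiction that $\overline{G_x}$ contains a $K_4^-$ on some vertices $y_1,y_2,y_3,y_4 \in N(x)$. Out of the six pairs in $\{y_1,y_2,y_3,y_4\}$, at least five are non-adjacent in $G_x$. By Proposition~\ref{prop:DoubleCriticalContractionCriticalChrom7Alpha3} we have $\alpha_x = 3$, so the four vertices cannot form an independent set in $G_x$; hence the remaining (sixth) pair must be an edge of $G$. After relabelling we may assume this edge is $y_3y_4$, and that the five pairs $y_1y_2,\ y_1y_3,\ y_1y_4,\ y_2y_3,\ y_2y_4$ are all non-edges of $G$.

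Now take $y := y_1$. Since $y_2,y_3,y_4 \in N(x)$ and each of them is non-adjacent to $y_1$, they all lie in $A(xy_1) = N(x)\setminus N[y_1]$. On the other hand, $N(x)$ is the disjoint union of $\{y_1\}$, $A(xy_1)$ and $B(xy_1)$, with $\deg_G(x)=9$ and $|B(xy_1)| \geq k-2 = 5$ by Corollary~\ref{cor:atLeastKminusTwoCommonNeighbours}. Therefore $|A(xy_1)| \leq 9 - 1 - 5 = 3$, which forces $A(xy_1) = \{y_2,y_3,y_4\}$.

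Inside $G[A(xy_1)]$ the only candidate edges are $y_2y_3$, $y_2y_4$ and $y_3y_4$; the first two are non-edges of $G$ by our setup, so $y_2$ has no neighbour in $A(xy_1)$. This contradicts Proposition~\ref{prop:noIsolatedInAxy}, which asserts $\delta(G[A(xy_1)]) \geq 1$. The only real choice in the argument is which vertex of the $K_4^-$ to designate as $y$; once $y_1$ is chosen so that it is non-adjacent to the other three, the size bound on $A(xy_1)$ and the "no isolated vertex" property do the rest, so I do not anticipate any serious obstacle.
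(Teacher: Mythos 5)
Your proof is correct, and it takes a genuinely different route from the paper's. Both arguments begin the same way: label the $K_4^-$ so that exactly one pair (your $y_3y_4$) is an edge of $G_x$, pick one of the two vertices ($y_1$) that is non-adjacent in $G_x$ to all three others, and observe that with $\deg_G(x)=9$ and $|B(xy_1)|\geq 5$ the set $N(x)\setminus N[y_1]$ has exactly three elements. From there the paths diverge. The paper's proof stays with $B(xy_1)$: it takes a $5$-colouring of $G-x-y_1$, notes that all five colours appear on $B(xy_1)$, concludes that $y_2$ must have a non-neighbour in $B(xy_1)$, and then counts that $y_2$ has at least four non-neighbours in $G_x$, contradicting $\delta(G_x)\geq 5$ (Proposition~\ref{prop:minimumDegreeInGx}). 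You instead work with $A(xy_1)$: the size forces $A(xy_1)=\{y_2,y_3,y_4\}$, and then the other symmetric vertex $y_2$ is isolated in $G[A(xy_1)]$, contradicting Proposition~\ref{prop:noIsolatedInAxy}. Your route is somewhat more modular, since it re-uses a lemma already proved rather than re-running the underlying colouring argument inline; the paper's version is a direct Kempe-style count. The appeal to $\alpha_x=3$ to rule out all six pairs being non-edges is fine but actually unnecessary: if all six pairs were non-edges, $y_2$ would still be isolated in $G[A(xy_1)]$ and the same contradiction applies.
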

\begin{proof}
  Let $x$ denote a vertex of degree $9$ in $G$. By
  Proposition~\ref{prop:minimumDegreeInGx}, the minimum degree in
  $G_x$ is at least $k-2 = 5$. Suppose that the vertices $y_1, y_2,
  z_1, z_2$ are the vertices of a subgraph $K_4^-$ in
  $\overline{G_x}$, that is, a $4$-cycle with a diagonal edge
  $y_1y_2$. The graph $G - x - y_1$ is $5$-colourable, and, according
  to Corollary~\ref{cor:atLeastKminusTwoCommonNeighbours}, every one
  of the five colours occurs in $B(xy_1)$. None of the vertices $y_2,
  z_1$ or $z_2$ are in $B(xy_1)$, that is, $B(xy_1) \subseteq V(G_x)
  \backslash \{y_1,y_2,z_1,z_2 \}$. Now the vertex $y_2$ is not
  adjacent to every vertex of $B(xy_1)$, since that would leave none
  of the five colours available for properly colouring $y_2$. Thus, in
  $G_x$ the vertex $y_2$ has at least four non-neighbours ($y_1, z_1,
  z_2$ and, at least, one vertex from $B(xy_1)$). Since $n(G_x) = 9$,
  we find that $y_2$ has at most $8 - 4$ neighbours in $N[x]$, and we
  have a contradiction.
\end{proof}
\begin{proposition}
  For any vertex $x$ of degree $9$ in $G$, any vertex of an
  $\alpha(G_x)$-set has degree $5$ in the neighbourhood graph $G_x$.
\label{prop:degreeInAlphaXSet}
\end{proposition}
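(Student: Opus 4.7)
The plan is to combine the three facts that we already have about a degree-$9$ vertex $x$: $\alpha_x = 3$ (Proposition~\ref{prop:DoubleCriticalContractionCriticalChrom7Alpha3}), the minimum degree in $G_x$ is at least $5$ (Proposition~\ref{prop:minimumDegreeInGx}), and $G[A(xy)]$ has no isolated vertex when $A(xy)$ is nonempty (Proposition~\ref{prop:noIsolatedInAxy}). The lower bound $\deg_{G_x}(y) \geq 5$ for every $y$ in an $\alpha_x$-set is immediate from Proposition~\ref{prop:minimumDegreeInGx}, so the content is the matching upper bound.

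To get the upper bound I would argue by contradiction. Let $S = \{y_1, y_2, y_3\}$ be an independent set of size $\alpha_x = 3$ in $G_x$ and suppose that some $y_i$, say $y_1$, satisfies $\deg_{G_x}(y_1) \geq 6$. Since $n(G_x) = 9$, the vertex $y_1$ has at most $8 - 6 = 2$ non-neighbours inside $G_x$. But $y_2$ and $y_3$ are already two non-neighbours of $y_1$ in $G_x$ (because $S$ is independent), so these must be \emph{all} of the non-neighbours of $y_1$ in $G_x$.

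Translating this to the sets introduced earlier: $A(xy_1) = N(x) \setminus N[y_1]$ equals precisely the set of non-neighbours of $y_1$ inside $G_x$, so $A(xy_1) = \{y_2, y_3\}$. Since $y_2 y_3 \notin E(G)$, both vertices of $G[A(xy_1)]$ are isolated, contradicting Proposition~\ref{prop:noIsolatedInAxy}. Hence $\deg_{G_x}(y_1) \leq 5$, and combined with the minimum-degree lower bound we conclude $\deg_{G_x}(y_i) = 5$ for each $i \in \{1,2,3\}$.

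I do not expect any real obstacle here: the whole argument is a one-line counting observation once one notices that an independent triple of size $\alpha_x$ in $G_x$ automatically exhausts the non-neighbour budget of any high-degree member of $S$, leaving $A(xy_i)$ too small to avoid an isolated vertex.
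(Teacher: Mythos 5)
Your proof is correct, and it takes a genuinely different route from the paper's. Both start the same way (lower bound $5$ from Proposition~\ref{prop:minimumDegreeInGx}, and the fact that a member of a size-$3$ independent set in a $9$-vertex $G_x$ has at most $2$ non-neighbours, hence degree at most $6$), but the contradiction when the degree is exactly $6$ is reached by different means. The paper fixes $w_1$ of degree $6$ and observes that $B(xw_2) \subseteq N(w_1;G_x)$, then invokes Corollary~\ref{cor:atLeastKminusTwoCommonNeighbours} to get all five colours appearing on $B(xw_2)$ in a $5$-colouring of $G-x-w_2$, leaving nothing for $w_1$; this is a fresh Kempe/colouring argument. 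You instead observe that equality of degree forces $A(xy_1) = \{y_2,y_3\}$, a non-edge, so $G[A(xy_1)]$ has isolated vertices, contradicting Proposition~\ref{prop:noIsolatedInAxy}. Your argument is shorter and reuses an already-established structural lemma rather than re-running a colouring argument; the tradeoff is that it depends on Proposition~\ref{prop:noIsolatedInAxy}, which itself encapsulates a colouring step, so the underlying engine is the same. (One could be even more direct: Proposition~\ref{prop:upperBoundOnAlphaGx} with $\deg_G(x)=9$, $\alpha_x=3$ gives $|B(xy)| \leq 5$ immediately, and $|B(xy)| = \deg_{G_x}(y)$; neither the paper nor you took that one-liner.)
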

\begin{proof}
  Let $x$ denote vertex of $G$ of degree $9$, and let $W = \{w_1,
  w_2, w_3 \}$ denote any independent set in $G_x$. This vertices of
  $W$ all have degree at most $6$ in $G_x$ and, by
  Proposition~\ref{prop:minimumDegreeInGx}, at least $5$. Suppose
  that, say, $w_1 \in W$ has degree $6$. Now $B(xw_2)$ is a subset of
  $N(w_1; G_x)$, $G - x - w_2$ is $5$-colourable, and, according
  to Corollary~\ref{cor:atLeastKminusTwoCommonNeighbours}, every
  one of the five colours occurs in $B(xy_1)$. This, however, leaves
  none of the five colours available for $w_1$, and we have a contradiction. It
  follows that any vertex of an independent set of three vertices in
  $G_x$ have degree $5$ in $G_x$.
\end{proof}
\begin{proposition}
  If $G$ has a vertex $x$ of degree $9$, then 
\begin{itemize}
\item[\tn{(a)}] the vertices of any maximum independent set $W = \{w_1, w_2,
  w_3 \}$ all have degree $5$ in $G_x$,
\item[\tn{(b)}] the vertices of $V(G_x)$ have degree $5$, $6$ or $8$ in $G_x$,
\item[\tn{(c)}] every vertex $w_i$ ($i=1,2,3$) has exactly one private
  non-neighbour w.r.t.\ $W$ in $G_x$, that is, there
  exist three distinct vertices in $G_x - W$, which we denote by
  $y_1$, $y_2$ and $y_3$, such that each $w_i$ ($i=1,2,3$) is adjacent
  to every vertex of $G_x - (W \cup y_i)$, and
\item[\tn{(d)}] each vertex $y_i$ has a neighbour and non-neighbour in
  $V(G_x) \backslash ( W \cup \{y_1, y_2, y_3 \} )$ (see
  Figure~\ref{fig:baseCaseExtended}).
\end{itemize}
\begin{figure}
\begin{center}
\input{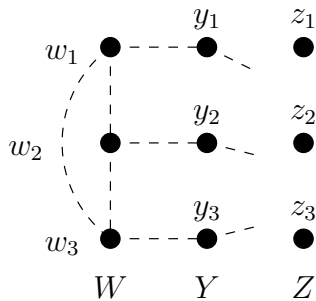}
\caption{The graph $G_x$ as described in
  Proposition~\ref{prop:baseCaseExtended}. The dashed curves indicate
  missing edges. The missing edges from $W$ to $Y \cup Z$ are exactly
  as indicated in the figure, while there may be more missing edges in
  $E(G_x - W)$ than indicated. The dashed curves starting at vertices
  of $y_i$ ($i=1,2,3$) and not ending at a vertex represent a missing
  edges between $y_i$ and a vertex of $Z$.}
\label{fig:baseCaseExtended}
\end{center}
\end{figure}
\label{prop:baseCaseExtended}
\end{proposition}
In the following, let $W := \{ w_1, w_2, w_3 \}$, $Y := \{ y_1, y_2,
y_3 \}$ and $Z := V(G_x) \backslash (W \cup Y)$.  Note that the above
corollary does not claim that each vertex $y_i$ has a private
non-neighbour in $Z$ w.r.t.\ to $Y$.
\begin{proof}
  Claim (a) follows from
  Proposition~\ref{prop:DoubleCriticalContractionCriticalChrom7Alpha3}
  and Proposition~\ref{prop:degreeInAlphaXSet}. According to
  Proposition~\ref{prop:minimumDegreeInGx}, $\delta (G_x) \geq 5$,
  and, obviously, $\Delta (G_x) \leq 8$, since $n(G_x) = 9$. If some
  vertex $y \in G_x$ has degree strictly less than $8$, then,
  according to Proposition~\ref{prop:missingNeighboursInGx}, it has at
  least two non-neighbours in $G_x$, that is, $\deg (y, G_x) \leq
  8-2$.  This establishes (b). As for the claim (c), each vertex $w_i$
  ($i=1,2,3$) has exactly five neighbours in $V(G_x) \backslash W$,
  which is a set of six vertices, and so $w_i$ has exactly one
  non-neigbour in $V(G_x) \backslash W$. Suppose say $w_1$ and $w_2$
  have a common non-neighbour in $V(G_x) \backslash W$, say $u$. Now
  the vertices $w_1, w_2, w_3$ and $u$ induce a $K_4$ or $K_4^-$ in
  the complement $\overline{G_x}$, which contradicts
  Propositions~\ref{th:ForbiddenK4missingOneEdge}. Hence, (c) follows.
  Now for claim (d). The fact that each vertex $y_i$ in $Y$ has at
  least one neighbour in $Z$ follows (b) and the fact that $y_i$ is
  not adjacent to $w_i$. It remains to show that $y_i$ has at least
  one non-neighbour in $Z$. The graph $G - x - w_1$ is $5$-colourable,
  in particular, there exists a $5$-colouring $c$ of $G_x - w_1$,
  which, according to
  Corollary~\ref{cor:atLeastKminusTwoCommonNeighbours}, assigns
  every colour of $[5]$ to at least one vertex of $B(xw_1)$. In this
  case $B(xw_1)$ consists of precisely the vertices $y_2, y_3, z_1,
  z_2$ and $z_3$. We may assume $\varphi(y_2) = 1$, $\varphi(y_3)=2$, $\varphi(z_1) =
  3$, $\varphi(z_2) = 4$ and $\varphi(z_3) = 5$. Since $w_2$ is adjacent to every
  vertex of $Z \cup Y \backslash \{ y_2 \}$, the only colour available
  for $w_2$ is the colour assign to $y_2$, that is, $\varphi(w_2) = \varphi(y_2) =
  1$. Similarly, $\varphi(w_3) = \varphi(y_3) = 2$. Both the vertices $w_2$ and
  $w_3$ are adjacent to $y_1$ and so the colour assigned to $y_1$
  cannot be one of the colours $1$ or $2$, that is, $\varphi(y_1) \in \{
  3,4,5 \}$. This implies, since $\varphi(z_1) = 3$, $\varphi(z_2) = 4$ and
  $\varphi(z_3) = 5$, that $y_1$ cannot be adjacent to all three vertices
  $z_1$, $z_2$ and $z_3$. Thus, (d) is established.
\end{proof}
\begin{corollary}
  If $G$ has a vertex $x$ of degree $9$, then there are at least two
  edges between vertices of $Y$.
\label{cor:AtLeastTwoEdgesInY}
\end{corollary}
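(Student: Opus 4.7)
The plan is to argue by contradiction, assuming $Y$ contains at most one edge and exhibiting some $y_i \in Y$ with too few neighbours in $G_x$ to be consistent with Proposition~\ref{prop:minimumDegreeInGx}, which guarantees $\delta(G_x) \geq k-2 = 5$.

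The bookkeeping is short. By part (c) of Proposition~\ref{prop:baseCaseExtended}, $y_i$ has exactly one non-neighbour in $W$, namely $w_i$. By part (d), $y_i$ has at least one non-neighbour in $Z$. So every $y_i$ already has at least two non-neighbours in $G_x$ coming from $W \cup Z$. Since $|V(G_x)| = 9$, if we can locate an additional two non-neighbours inside $Y$ for a single $y_i$, then $\deg_{G_x}(y_i) \leq 9 - 1 - 4 = 4$, which contradicts $\delta(G_x) \geq 5$.

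First I would handle the case in which $Y$ is independent (zero edges). Then every $y_i$ has the two other members of $Y$ as non-neighbours, producing the four non-neighbours immediately. Second I would handle the case in which $Y$ contains exactly one edge, which without loss of generality is $y_1 y_2$. Then $y_3$ is non-adjacent to both $y_1$ and $y_2$, and so picking $i = 3$ gives the same four non-neighbours and the same contradiction. Consequently $Y$ must contain at least two edges.

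I do not anticipate an obstacle; the argument is a direct degree count once the structural data of Proposition~\ref{prop:baseCaseExtended} is in hand. The only mild subtlety is recognizing that even in the one-edge case the contradiction is obtained at the unique vertex $y_3$ missed by that edge, rather than at an arbitrary element of $Y$.
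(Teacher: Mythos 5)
Your argument is correct and takes essentially the same route as the paper's: count the guaranteed non-neighbours of some $y_i$ in $W$ (one, from part (c)), in $Z$ (one, from part (d)), and in $Y$ (two, from the assumption $e(Y)\le 1$, focusing on $y_3$ in the one-edge case), to conclude $\deg_{G_x}(y_i)\le 4$, contradicting the minimum-degree bound in $G_x$. The only cosmetic difference is that you invoke Proposition~\ref{prop:minimumDegreeInGx} for the contradiction, while the paper cites part (b) of Proposition~\ref{prop:baseCaseExtended}; these deliver the same fact.
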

\begin{proof}
  If $m(G[Y]) \leq 1$, then it follows from (c) and (d) of
  Proposition~\ref{prop:baseCaseExtended}, that some vertex $y_i \in
  Y$ has at most four neighbours in $G_x$. But this contradicts (b) of
  the same proposition. Thus, $m(G[Y]) \geq 2$.
\end{proof}
\begin{figure}
\begin{center}
\input{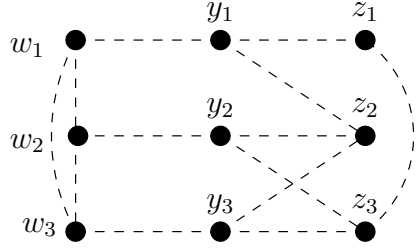}
\caption{The graph $F$. The dashed lines between vertices indicate
  missing edges. Any edge which is not explicity indicated missing is
  present in $F$.}
\label{fig:F}
\end{center}
\end{figure}
\begin{lemma}
  If $x$ is a vertex of $G$ with minimum degree $9$ and the
  neighbourhood graph $G_x$ is isomorphic to the graph $F$ of
  Figure~\ref{fig:F}, then $G$ is contractible to $K_7$.
\label{lem:F}
\end{lemma}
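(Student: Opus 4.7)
The plan is to contract $G_x$ down to $K_6$ by exhibiting six pairwise-adjacent branch sets whose union is $V(G_x)$. Since $x$ is adjacent to every vertex of $V(G_x) = N(x)$, adjoining $\{x\}$ as a seventh branch set then immediately yields $K_7$ as a minor of $G$, so the whole task reduces to producing a $K_6$-minor inside $G_x$.

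Reading off the non-edges of $F$ from Figure~\ref{fig:F}, the missing edges in $G_x$ are the three inside $W$ (which is independent), the three edges $w_iy_i$ guaranteed by Proposition~\ref{prop:baseCaseExtended}(c), and the three edges $y_iz_i$. In particular, $G[Y]$ and $G[Z]$ are triangles, $W$ is completely joined to $Z$, and for every $j$ the only edge missing between $z_j$ and $Y$ is $y_jz_j$. With this structure in hand, I would take the partition
\[
B_1 = \{w_1, z_2\}, \ B_2 = \{w_2, z_3\}, \ B_3 = \{w_3, z_1\}, \ B_4 = \{y_1\}, \ B_5 = \{y_2\}, \ B_6 = \{y_3\}.
\]
Each $B_i$ for $i \in \{1,2,3\}$ is connected in $G_x$ because $w_iz_j \in E(F)$ for every $i,j$.

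To conclude, I would verify that each of the fifteen pairs of branch sets is joined by an edge of $F$. Exploiting cyclic symmetry this splits into three routine types: (a) the singletons $B_4, B_5, B_6$ are mutually adjacent via the triangle $G[Y]$; (b) the pairs $B_1, B_2, B_3$ are mutually adjacent via $W$-to-$Z$ edges (for instance $w_1z_3$ joins $B_1$ to $B_2$, and similarly under the cyclic rotation); and (c) for a singleton $\{y_k\}$ and a pair $\{w_i, z_{\sigma(i)}\}$, with $\sigma$ the cyclic shift $1\mapsto 2\mapsto 3\mapsto 1$, either $w_iy_k \in E(F)$ (whenever $k \ne i$) or else $k = i$ and the edge $z_{\sigma(i)}y_i$ is present in $F$ because $\sigma(i) \ne i$ and the only non-edge from $z_{\sigma(i)}$ into $Y$ is $y_{\sigma(i)}z_{\sigma(i)}$. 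This shows that $G_x$ contracts to $K_6$, and adjoining $\{x\}$ produces the desired $K_7$-minor of $G$.

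The one substantive obstacle is picking a pairing of the $w_i$'s with the $z_j$'s that avoids every non-edge and simultaneously leaves each $y_k$ with a friend in every pair; the cyclic choice $i \mapsto \sigma(i)$ does this uniformly, after which the fifteen adjacency checks collapse to three symmetry classes, and no case analysis beyond that is needed.
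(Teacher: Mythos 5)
Your argument founders on a misreading of the graph $F$. You take the non-edges of $F$ to be exactly the three pairs within $W$, the three edges $w_iy_i$, and the three edges $y_iz_i$, which would make $Z$ a triangle and give each $z_j$ exactly one non-neighbour in $G_x$. But a vertex of $G_x$ cannot have exactly one non-neighbour: by Proposition~\ref{prop:missingNeighboursInGx} no vertex of $\overline{G_x}$ has degree $1$, and Proposition~\ref{prop:baseCaseExtended}(b) pins each degree in $G_x$ to $5$, $6$, or $8$ --- never $7$. Moreover, $F$ arises in Case~1.2.2 of the proof of Theorem~\ref{th:doubleCriticalContractionCriticalSevenChrom}, which operates under the standing assumption $e(Z) \leq 2$ (the case $e(Z)=3$ having already been dispatched by a direct contraction), so $Z$ cannot induce a triangle in $F$.

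The actual $F$ has additional non-edges: one of the $z$'s is non-adjacent to all of $Y$, two of the $z$'s are mutually non-adjacent, and further $y$-$z$ edges are missing. The whole point of Lemma~\ref{lem:F} is that this graph contracts only to $K_6$ \emph{minus an edge}, not to $K_6$ itself. The paper's proof exploits exactly this: it identifies four pairs of vertices of $G_x$, each of which is left non-adjacent by some contraction of $G_x$ to $K_6^-$, shows that a path between any such pair routed through $V(G)\setminus N[x]$ completes a $K_6$-minor (and hence a $K_7$-minor via $x$), and then derives a contradiction from the assumption that no such path exists: in that case at most five vertices of $G_x$ are adjacent to a fixed component $H$ of $G - N[x]$, yielding a separator of size at most $5$ between $x$ and $H$, contradicting Theorem~\ref{th:doubleCriticalImpliesSixConnected}. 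Your proposal produces a $K_6$-minor inside $G_x$ alone --- which would make all of this machinery unnecessary --- only because the graph you are contracting has strictly more edges than $F$. The branch-set construction is internally consistent for the graph you describe, but it verifies adjacencies (for instance $z_iz_j$ edges and certain $y_iz_j$ edges) that do not exist in $F$.
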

\begin{proof}
  According to Corollary~\ref{cor:FourColourableNeighbourGraphGx},
  $\chi (G[N[x]]) \leq 5$, and so $N[x] \neq V(G) $. Let $H$ denote
  some component in $G - N[x]$. There are several ways of contracting
  $G_x$ to $K_6^-$. For instance, by contracting the three edges $w_1
  y_3$, $w_2 y_1$ and $w_3 y_2$ into three distinct vertices a $K_6^-$
  is obtained, where the vertices $z_1$ and $z_3$ remain non-adjacent.
  Thus, if there were a $z_1$-$z_3$-path $P(z_1, z_3)$ with internal
  vertices completely contained in the set $V(G) \backslash N[x]$, then, by
  contracting the edges of $P( z_1, z_3)$, we would have a
  neighbourhood graph of $x$, which were contractible to $K_6$. Similarly,
  there exists contractions of $G_x$ such that if only there were a
  $w_1$-$y_1$-path $P(w_1, y_1)$, $w_2$-$y_2$-path $P(w_2, y_1)$ or
  $w_3$-$y_3$-path $P(w_3, y_3)$ with internal vertices completely
  contained in the set $V(G)\backslash N[x]$, then such a path could be
  contracted such that the neighbourhood graph of $x$ would be
  contractible to $K_6$. Assume that none of the above mentioned paths
  $P( z_1, z_3)$, $P(w_1, y_1)$, $P(w_2, y_1)$ and $P(w_3, y_3)$
  exist. In particular, for each pair of vertices $( z_1, z_3)$,
  $(w_1, y_1)$, $(w_2, y_2)$ and $(w_3, y_3)$ at most one vertex is
  adjacent to a vertex of $V(H)$, since if both, say $z_1$ and $z_3$
  were adjacent to, say $u \in V(H)$ and $v \in V(H)$, respectively,
  then there would be a $z_1$-$z_3$-path with internal vertices
  completely contained in the set $V(G) \backslash N[x]$,
  contradicting our assumption. Now it follows that in $G$ there can
  be at most five vertices of $V(G_x)$ adjacent to vertices of $V(H)$.
  By removing from $G$ the vertices of $V(G_x)$, which are adjacent to
  vertices of $V(H)$, the graph splits into at least two distinct
  components with $x$ in one component and the vertices of $V(H)$ in
  another component.  This contradicts
  Theorem~\ref{th:doubleCriticalImpliesSixConnected}, which states
  that $G$ is $6$-connected, and so the proof is complete.
\end{proof}
\begin{theorem}
  Every double-critical $7$-chromatic graph $G$ contains $K_7$ as a
  minor.
\label{th:doubleCriticalContractionCriticalSevenChrom}
\end{theorem}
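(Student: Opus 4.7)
The plan is a case analysis on the minimum degree of $G$. If $G=K_7$ we are done, so assume $G\neq K_7$; then Proposition~\ref{prop:minimumDegree} gives $\delta(G)\geq 8$, and by Theorem~\ref{th:noAdjacentLowVertices} no two vertices of degree $8$ are adjacent in $G$. The first dichotomy separates the ``dense'' case $\delta(G)\geq 10$ from the ``sparse'' cases $\delta(G)\in\{8,9\}$. In the dense case, $2m(G)\geq 10\,n(G)$, and the appropriate extremal $K_7$-minor bound of Mader (the $K_7$-analogue of the Gy\H{o}ri/Mader bound used in the proof of Theorem~\ref{th:DoubleCriticalChrom6}) produces the desired $K_7$-minor directly.

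For the case $\delta(G)=8$ I would pick a vertex $x$ of degree $8$ and exploit the rigid structure of $G_x$ supplied by Proposition~\ref{prop:structureOfGx}: the complement $\overline{G_x}$ is a disjoint union of isolated vertices and cycles of length at least $5$, on eight vertices, so it is one of $C_5$ plus three isolated vertices, $C_6$ plus two isolated vertices, $C_7$ plus one isolated vertex, or $C_8$. In the first two possibilities the isolated vertices together with a maximum independent set of the cycle in $\overline{G_x}$ form a $K_6$ inside $G_x$, yielding a $K_{k-1}$ in $G$ and contradicting Proposition~\ref{prop:forbiddenCompleteKminusOne}. In each of the remaining two possibilities I would exhibit six pairwise-adjacent connected branch sets inside $G_x$ (a $K_6$-minor of $G_x$) obtained by at most two internal edge-contractions, in the spirit of case~(iii) of the proof of Theorem~\ref{th:DoubleCriticalChrom6}; together with $x$ this provides the required $K_7$-minor.

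The main case, and the main obstacle, is $\delta(G)=9$. Here I would pick $x$ with $\deg(x)=9$ and invoke Proposition~\ref{prop:baseCaseExtended}, which supplies a partition $V(G_x)=W\cup Y\cup Z$ together with precise information about which edges may be missing in $G_x$, and Corollary~\ref{cor:AtLeastTwoEdgesInY}, which forces $m(G[Y])\geq 2$. The plan is to enumerate the few resulting possibilities for the non-edges of $G_x$ (essentially the edges inside $Y$ and between $Y$ and $Z$) and for each configuration either construct three contractions inside $G_x$ that, combined with the three singletons of $W$, contract $G_x$ to $K_6$ and hence $N[x]$ to $K_7$, or else show that $G_x\cong F$, at which point Lemma~\ref{lem:F} closes the argument. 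The hardest configurations will be those in which no purely internal contraction of $N[x]$ suffices; in those I expect to route a path through $V(G)\setminus N[x]$, justified by the $6$-connectivity of $G$ given by Theorem~\ref{th:doubleCriticalImpliesSixConnected}, in the same manner as the proof of Lemma~\ref{lem:F}.
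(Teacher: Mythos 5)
Your proposal follows the same three-way split on $\delta(G)$ and the same key ingredients as the paper: the Gy\H{o}ri/Mader extremal bound for $\delta\geq 10$, the $\overline{G_x}$-structure from Proposition~\ref{prop:structureOfGx} for $\delta=8$, and, for $\delta=9$, the $W,Y,Z$-partition of Proposition~\ref{prop:baseCaseExtended}, Corollary~\ref{cor:AtLeastTwoEdgesInY}, and the $6$-connectivity escape via Lemma~\ref{lem:F}. One small correction: in the $\delta=8$, $\ell\in\{5,6\}$ subcases the isolated vertices together with a maximum independent set of the cycle give only a $K_5$ inside $G_x$; the forbidden $K_6=K_{k-1}$ in $G$ appears after adjoining $x$, exactly as in the paper's cases (i) and (ii).
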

\begin{proof}
  If $G$ is a complete $7$-graph, then we are done. Hence, we may
  assume that $G$ is not a complete $7$-graph, and so, according to
  Proposition~\ref{prop:minimumDegree}, $\delta (G) \geq 8$. If
  $\delta (G) \geq 10$, then $m(G) \geq 5n(G) > 5n - 14$, and it
  follows from a theorem of Gy{\H{o}}ri~\cite{MR652887} and
  Mader~\cite{MR0229550} that $G$ contains $K_7$ as a minor. Let $x$
  denote a vertex of minimum degree. Suppose $\delta (G) = 8$. Now,
  according to Proposition~\ref{prop:structureOfGx}, the complement
  $\overline{G_x}$ consists of isolated vertices and cycles (at least
  one), each having length at least five. Since $n(G_x) = 9$, it
  follows that $\overline{G_x}$ contains exactly one cycle $C_\ell$ of
  length $\geq 5$.
\begin{itemize}
\item[(i)]
If $\ell = 5$, then $G[y_1, y_3, y_6, y_7, y_8, x]$ is the complete
$6$-graph, a contradiction.
\item[(ii)]
If $\ell = 6$, then $G[y_1, y_3, y_5, y_7, y_8, x]$ is the complete
$6$-graph, a contradiction.
\item[(iii)]
If $\ell = 7$, then by contracting the edges $y_1 y_4$ and $y_2 y_6$
of $G_x$ into two distinct vertices a complete $6$-graph is obtained,
and so $G \geq K_7$.
\item[(iv)]
If $\ell = 8$, then by contracting the edges $y_1y_5$ and $y_3y_7$ of
$G_x$ into two distinct vertices a complete $6$-graph is obtained, and
so $G \geq K_7$.
\end{itemize}
Now, suppose $\delta (G) = 9$. According to
Proposition~\ref{prop:baseCaseExtended}, there exists an
$\alpha_x$-set $W = \{w_1, w_2, w_3 \}$ of three distinct vertices
such that there is a set $Y = \{y_1,y_2,y_3\} \subseteq V(G)
\backslash W$ of three distinct vertices such that $N(w_i, G_x) =
V(G_x) \backslash (W \cup y_i)$ (see
Figure~\ref{fig:baseCaseExtended}). Let $Z = \{ z_1, z_2, z_3\}$
denote the three remaining vertices of $G_x - (W\cup Y)$. We shall
investigate the structure of $G_x$ and consider several cases. Thus,
$e(W)=0$, and, as follows from Corollary~\ref{cor:AtLeastTwoEdgesInY},
$e(Y) \geq 2$.
\begin{figure}
\begin{center}
\input{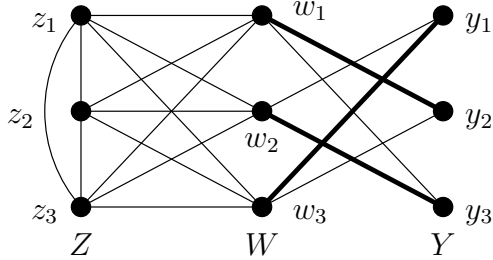}
\caption{In Case 1.2.3, the graph $G_x$ contains the graph depicted above as a
  subgraph. The thick curves indicate the edges to be contracted. By
  contracting the three edges of $G_x$ as indicated above, a $K_6$ minor
  is obtained.}
\label{fig:Zcomplete}
\end{center}
\end{figure}
Suppose $e(Z) = 3$. By contracting the edges $w_1y_2$, $w_2y_3$
and $w_3y_1$ of $G_x$ into three distinct vertices a complete
$6$-graph is obtained~(see Figure~\ref{fig:Zcomplete}). Thus, $G \geq
K_7$. In the following we shall be assuming $e(Z) \leq 2$.
\begin{figure}
\begin{center}
\input{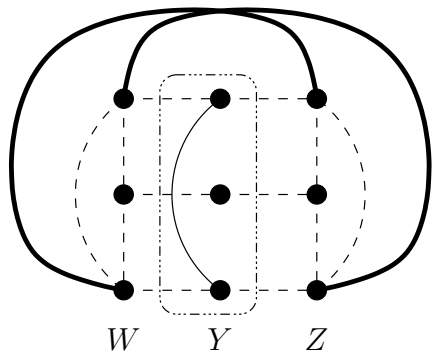}
\caption{The graph $G_x$ contains the graph depicted above as a
  subgraph. The dashed curves represent edges missing in $G_x$.
  Except for the edges of $E(Y)$, any two pair of edge which are not
  explicity shown as non-adjacent are adjacent. The edge-set $E(Y)$
  contains at least two edges. By symmetry, we assume $y_1y_3 \in
  E(Y)$. By contracting two edges represented by thick curves, it
  becomes clear that $G_x$ contains $K_6$ as a minor.}
\label{fig:Zempty}
\end{center}
\end{figure}
Secondly, suppose $e(Z) = 0$. Now $Z$ is an $\alpha_x$-set and it
follows from Proposition~\ref{prop:baseCaseExtended}, that $G_x$
possess the structure as indicated in Figure~\ref{fig:Zempty}. By
contracting the edges $w_1z_3$ and $w_3 z_1$ of $G_x$ into two
distinct vertices $w_1'$ and $w_3'$, we find that the vertices $w_1',
w_2, w_3', y_1, y_3$ and $z_2$ induce a complete $6$-graph, and we are
done. Thus, in the following we shall be assuming $e(Z) \geq 1$.
Moreover, we shall distinguish between several cases depending on the
number of edges in $E(Y)$ and $E(Z)$. So far we have established
$e(Y) \geq 2$ and $2 \geq e(Z) \geq 1$. We shall often use the
fact that $\deg (u, G_x) \in \{5,6,8 \}$ for every vertex $u \in G_x$,
in particular, each vertex of $G_x$ can have at most three
non-neighbours in $G_x$ (excluding itself).
\begin{figure}
\begin{center}
\input{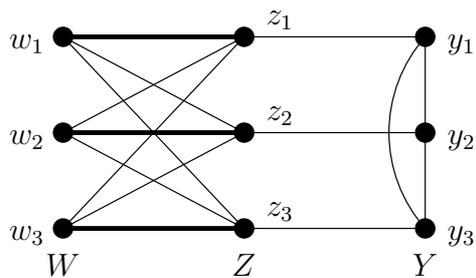}
\caption{The graph $G_x$ contains the graph depicted above as a
  subgraph. The thick curves indicate the edges to be contracted. By
  contracting two edges of $G_x$ as indicated above, it becomes
  obvious that $G_x$ contains $K_6$ as a minor.}
\label{fig:YcompletePlusMatching}
\end{center}
\end{figure}
\begin{itemize}
\item[1)] Suppose $e(Y) = 3$.
\item[1.1)] Suppose, moreover, that there is a matching $M$ of $Y$ into
  $Z$, say $M = \{y_1z_1, y_2z_2, y_3z_3 \}$. Now contracting the
  edges $w_i z_i$ ($i=1,2,3)$ into three distinct vertices a complete
  $6$-graph is obtained, and we are done (see
  Figure~\ref{fig:YcompletePlusMatching}).
\item[1.2)] Suppose that there is no matching of $Y$ into $Z$. Now it
  follows from Hall's Theorem~\cite[Th. 16.4]{MR2368647} that there
  exists some non-empty set $S \subseteq Y$ such that $e(S,Z) < |S|$
  (recall, that $e(S,Z)$ denotes the number of edges with one
  end-vertex in $S$ and the other end-vertex in $Z$). According to
  Proposition~\ref{prop:baseCaseExtended}, $e(S,Z) \geq 1$ for any
  non-empty $S \subseteq Y$.
\item[1.2.1)] Suppose that $e(Y,Z) = 1$, say $E(Y,Z) = \{ z_1 \}$.
  Now $y_1,y_2$ and $y_3$ are all non-neighbours of $z_2$ and $z_3$,
  and so both $z_2$ and $z_3$ must be adjacent to each other and to
  $z_1$, that is, $e(Z) = 3$, contradicting our assumption that
  $e(Z) \leq 2$. 
\item[1.2.2)] Suppose that $e(Y, Z) = 2$, say $E(Y,Z) = \{ z_1, z_2
  \}$. Now $y_1, y_2$ and $y_3$ are three non-neighbours of $z_3$, and
  so $z_3$ must be adjacent to both $z_2$ and $z_3$. Since
  $e(Z) \leq 2$, it must be the case that $z_1$ and $z_2$ are
  non-neighbours. Since no vertex of $G_x$ has precisely one
  non-neighbour, both $z_1$ and $z_2$ must have at least one
  non-neighbour in $Y$. By symmetry, we may assume that $y_1$ is a
  non-neighbour of $z_1$. Now $w_1, z_1$ and $z_3$ are three
  non-neighbours of $y_1$, and so $y_1$ cannot be a non-neighbour of
  $z_2$. It follows that $y_2$ or $y_3$ must be a non-neighbour of
  $z_2$. By symmetry, we may assume $y_2 z_2 \notin E(G)$. Now there
  may be no more edges missing in $G_x$, however, we assume that there
  are more edges missing, and show that $G_x$ remains contractible to
  $K_6$. Each of the vertices $y_1$ and $y_2$ has three non-neighbours
  specified, while $y_3$ already has two non-neighbours
  specified. Thus, the only possible hitherto undetermined missing
  edge must be either $y_3 z_1$ or $y_3 z_2$ (not both, since that
  would imply $y_3$ to have at least four non-neighbours). By
  symmetry, we may assume $y_3 z_2 \notin E(G)$. Now it is clear that $G_x$ is
  isomorphic to the graph depicted in Figure~\ref{fig:H17}, and so
  it follows from Lemma~\ref{lem:F} that $G$ is contractible to
  $K_7$.
\begin{figure}
\begin{center}
\input{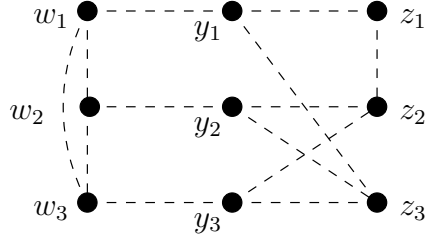}
\caption{
In case 1.2.2, the graph $G_x$ is isomorphic to the graph depicted
  above. Any edge which is not explicity indicated missing is
  present.}
\label{fig:H17}
\end{center}
\end{figure}
\item[1.2.3)] Suppose that $e(Y,Z) = 3$. Now, since there is no
  matching of $Y$ into $Z$ there must be some non-empty proper subset
  $S$ of $Y$ such that $|S| \leq 2$ and $e(S,Z) < |S|$. Recall,
  $e(S,Z) \geq 1$ for any non-empty subset $S$ of $Y$, and so it
  must be the case that $|S| = 2$ and $e(S,Z) = 1$, say $S = \{ y_1,
  y_2 \}$ and $E(S, Z) = \{ z_1 \}$. The assumption $e(Y, Z) = 3$
  implies that $y_3$ is adjacent to both $z_2$ and $z_3$. According to
  Proposition \ref{prop:baseCaseExtended}~(d), each vertex of $Y$ has
  a non-neighbour in $Z$, and so it must be the case that $y_3$ is not
  adjacent to $z_1$. Now, since $z_1$ has one non-neighbour in
  $V(G_x)\backslash \{z_1 \}$,
  Proposition~\ref{prop:missingNeighboursInGx}~(b) implies that it
  must have at least one other non-neighbour in $V(G_x) - z_1$. The
  only possible non-neighbours of $z_1$ in $V(G_x) \backslash \{ z_1,
  y_3 \}$ are $z_2$ and $z_3$, and, by symmetry, we may assume that
  $z_1$ and $z_2$ are not adjacent. Thus, $z_2$ is adjacent to neither
  $z_1$, $y_1$ nor $y_2$ and so $z_2$ must be adjacent to every vertex
  of $V(G_x) \backslash \{z_1, z_2, y_1, y_2 \}$, in particular, $z_2$
  is adjacent to $z_3$.  Thus, $G_x$ contains the graph depicted in
  Figure~\ref{fig:noMatchingS2NS1} as a subgraph. Now, by contracting
  the edges $w_1 z_1$, $w_2 y_1$ and $w_3 y_2$ of $G_x$ into three
  distinct vertices a complete $6$-graph is obtained.
\begin{figure}
\begin{center}
\input{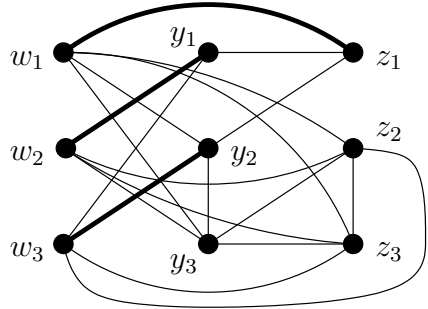}
\caption{The graph $G_x$ contains the graph depicted above as a
  subgraph. The thick curves indicate the edges to be contracted. By
  contracting three edges of $G_x$ as indicated above, it becomes
  obvious that $G_x$ contains $K_6$ as a minor.}
\label{fig:noMatchingS2NS1}
\end{center}
\end{figure}
\item[2)] Suppose $e(Y) = 2$, say $y_1y_2, y_2y_3 \in E(G)$.
\item[2.1)] Suppose that $e(Z) = 2$, say $z_1z_2, z_2z_3 \in E(G)$. 
\item[2.1.1)] Suppose that at least one of the edges $y_1 z_1$ or
  $y_3z_3$ are not in $E(G)$, say $y_1 z_1 \notin E(G)$. The vertex
  $y_1$ has three non-neighbours in $G_x$, namely $w_1, y_3$ and
  $z_1$. Thus, $y_1$ must be adjacent to both $z_2$ and $z_3$. We have
  determined the edges of $E(W)$, $E(Y)$ and $E(Z)$, and the edges
  joining vertices of $W$ with vertices of $Y \cup Z$. Moreover, $G_x$
  contains at least two edges joining vertices of $Y$ with vertices of
  $Z$, as indicated in Figure~\ref{fig:F34}~(a). It follows that $G_x$
  contains the graph depicted in Figure~\ref{fig:F34}~(b) as a
  subgraph. By contracting the edges $w_1 y_2$, $w_2 y_3$ and $w_3
  z_1$ of $G_x$ into three distinct vertices a complete $6$-graph is
  obtained, and so $G \geq K_7$.
\begin{figure}[htbp]
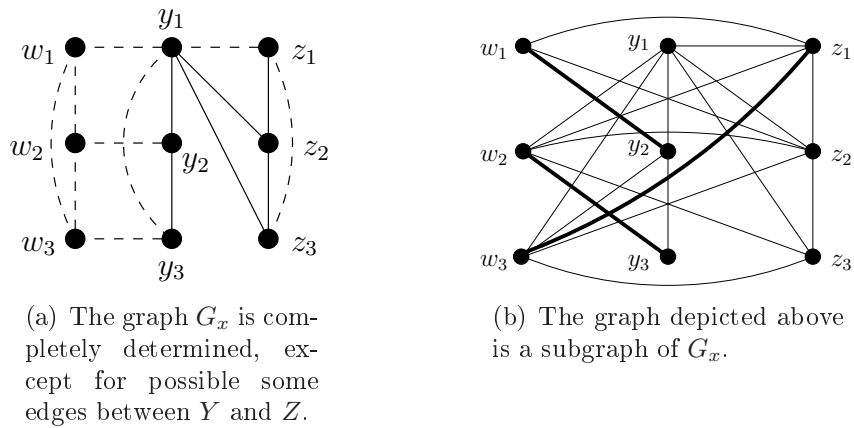

  \begin{center}
    \mbox{ \subfigure[The graph $G_x$ is completely determined, except
      for possible some edges between $Y$ and
      $Z$.]{\scalebox{1.0}{\input{F34a.pstex_t}}} \hspace{2cm}
      \subfigure[The graph depicted above is a subgraph of
      $G_x$.]{\scalebox{0.8}{\input{F34b.pstex_t}}} }
\caption{The case 2.1.1.}
    \label{fig:F34} \end{center}
\end{figure}
\item[2.1.2)] Suppose that both $y_1 z_1$ and $y_3 z_3$ are
  in $E(G)$. 
\item[2.1.2.1)]
Suppose that $y_1 z_2$ or $y_3 z_2$ is in $E(G)$, say $y_1 z_2 \in
E(G)$. In this case $G_x$ contains the graph depicted in
Figure~\ref{fig:Q1}~(a) as a subgraph, and so by contracting the edges
$w_1 y_2$, $w_2 y_3$ and $w_3 z_3$ into three distinct vertices a
complete $6$-graph is obtained.
\begin{figure}[htbp]
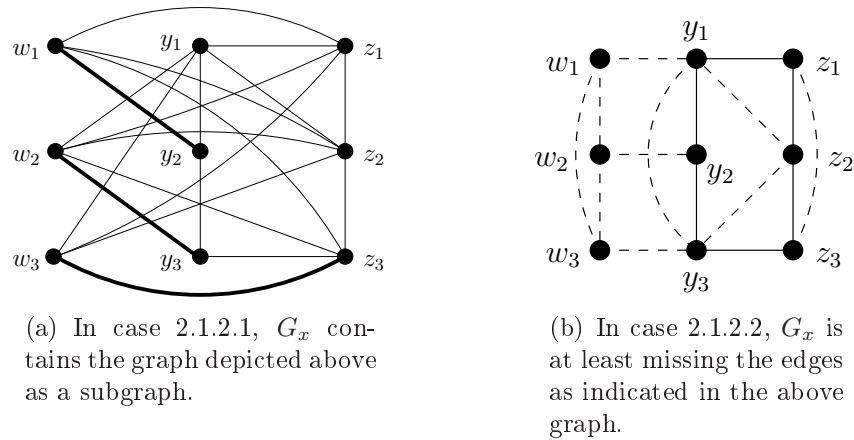

  \begin{center} \mbox{ \subfigure[In case 2.1.2.1, $G_x$ contains the
    graph depicted above as a
    subgraph.]{\scalebox{0.8}{\input{Q1.pstex_t}}} \hspace{2cm} \subfigure[In
    case 2.1.2.2, $G_x$ is at least missing the edges as indicated in
    the above graph.]{\scalebox{1.0}{\input{Q1b.pstex_t}}} }
\caption{The case 2.1.2.}
    \label{fig:Q1} \end{center}
\end{figure}
\item[2.1.2.2)]
Suppose that neither $y_1 z_2$ nor $y_3 z_2$ is in $E(G)$. Now $S
:= \{ y_1, z_2, y_3 \}$ is an independent set of $G_x$ and so,
according to Proposition~\ref{prop:baseCaseExtended}~(c), the vertex
$z_2$ has a private non-neighbour in $V(G_x) - S$ w.r.t.\ $S$, and, as
is easily seen from Figure~\ref{fig:Q1}~(b), the only possible
non-neighbour of $z_2$ in $V(G_x)$ is $y_2$.  The vertices $z_1$ and
$z_3$ are not adjacent, and so, according to
Proposition~\ref{prop:baseCaseExtended}~(b), each of them must have a
second non-neighbour. Since $y_1$ and $y_3$ already have three
non-neighbours specified, it follows that the only possible
non-neighbour of $z_1$ and $z_3$ is $y_2$, but if neither $z_1$ nor
$z_3$ are adjacent to $y_2$, then $y_2$ would have at least four
non-neighbours in $G_x$, a contradiction.
\item[2.2)] Suppose that $e(Z) = 1$, say $E(Z) = \{ z_1 z_3 \}$.
\item[2.2.1)]
Suppose that $y_2 z_2 \in E(G)$. Now at least one of the edges $y_1
z_2$ and $y_3 z_2$ is in $E(G)$, since otherwise $z_2$ would have at
least four non-neighbour. By symmetry, we may assume $y_1 z_2 \in
E(G)$. At least one of the edges $y_1 z_1$ and $y_1 z_3$ must be in
$E(G)$, since $y_1$ cannot have more than three non-neighbours. By
symmetry, we may assume $y_1 z_1 \in E(G)$ (see
Figure~\ref{fig:F5}~(a)). By contracting the edges $w_1 z_1$, $w_3
z_3$ and $y_2 y_3$ of $G_x$ into three distinct vertices we obtain a
complete $6$-graph (see Figure~\ref{fig:F5}~(b)), and, thus, $G \geq
K_7$.
\begin{figure}[htbp]
  \begin{center} \mbox{ \subfigure[The graph $G_x$ is completely
    determined, except for some edges between $Y$ and
    $Z$.]{\scalebox{1.0}{\input{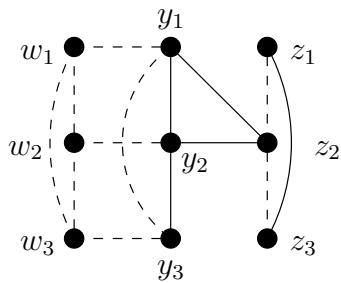}}} \hspace{2cm} \subfigure[The
    above graph is a subgraph of
    $G_x$.]{\scalebox{0.8}{\input{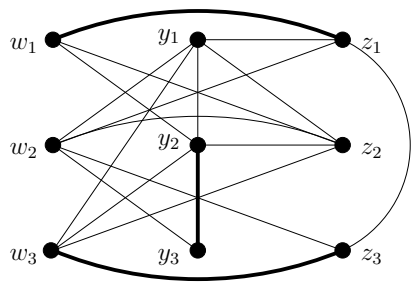}}} }
\caption{The case 2.2.1.}
    \label{fig:F5} \end{center}
\end{figure}
\item[2.2.2)]
Suppose that $y_2 z_2 \notin E(G)$. Each of the vertices $z_1$ and
$z_3$ has exactly one non-neighbour in $Z$, namely $z_2$, and so each
must have at least one non-neighbour in $Y$. If neither $z_1$ nor
$z_3$ were adjacent to $y_2$, then $y_2$ would have at least four
non-neighbours in $G_x$. Thus, at least one of $z_1$ and $z_3$ is not
adjacent to $y_1$ or $y_3$. By symmetry, we may assume that $y_1
z_1 \notin E(G)$. Now we need to determine the non-neighbour of $y_3$ in $Y$.
\item[2.2.2.1)]
Suppose that $y_2 z_3 \in E(G)$. Since $y_1$ already has three
non-neighbours, it must be the case that $y_3$ is a non-neighbour of
$z_3$ in $Y$. There may also be an edge joining $y_2$ and $z_1$, but
in any case$G_x$ contains the graph depicted in
Figure~\ref{fig:Q2}~(a) as a subgraph. Thus, by contracting the edges
$w_2 z_1$, $w_3 z_1$ and $y_1 z_2$ into three distinct vertices, we
find that $K_6 \leq G_x$.
\begin{figure}[htbp]
  \begin{center} \mbox{ \subfigure[In case 2.2.2.1, $G_x$ contains the
    graph depicted above as a
    subgraph.]{\scalebox{0.8}{\input{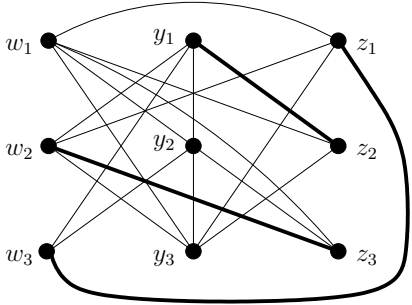}}} \hspace{2cm} \subfigure[In
    case 2.2.2.2, $G_x$ contains the graph depicted above as a
    subgraph.]{\scalebox{0.8}{\input{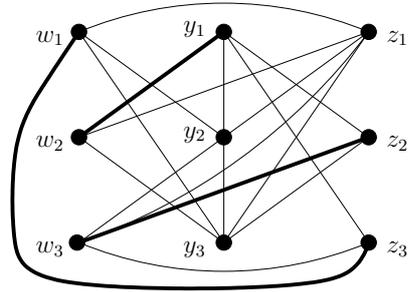}}} }
\caption{The case 2.2.2.}
    \label{fig:Q2} \end{center}
\end{figure}
\item[2.2.2.2)]
Suppose that $y_2 z_3 \notin E(G)$. In this case we find that $S := \{
y_2, z_2, z_3 \}$ is a maximum independent set in $G_x$ and so,
according to Proposition~\ref{prop:baseCaseExtended}~(c), each of the
vertices of $S$ has a private non-neighbour in $V(G_x) - S$
w.r.t.\ $S$. The vertices $w_1$, $y_3$ and $z_1$ are all non-neighbours
of $y_1$, and so $z_3$ cannot be a non-neighbour of $y_1$. It follows
that the non-neighbour of $z_3$ in $V(G_x) - S$ must be $y_3$. Now
each of the vertices of $Y$ has three non-neighbours, and so there can
be no further edges missing from $G_x$, that is, $G_x$ contains the
graph depicted in Figure~\ref{fig:Q2}~(b) as a subgraph.
\end{itemize}
This, finally, completes the case $\delta (G) = 9$, and so the proof
is complete.
\end{proof}
Obviously, if every $k$-chromatic graph for some fixed integer $k$ is
contractible to the complete $k$-graph, then every $\ell$-chromatic
graph with $\ell \geq k$ is contractible to the complete $k$-graph.
The corresponding result for \emph{double-critical} graphs is not
obviously true. However, for $k \leq 7$, it follows from
the aforementioned results and Corollary~\ref{cor:2347} that every
double-critical $\ell$-chromatic graph with $\ell \geq k$ is
contractible to the complete $k$-graph.
\begin{corollary}
  Every double-critical $k$-chromatic graph with $k \geq 7$ contains
  $K_7$ as a minor.
\label{cor:2347}
\end{corollary}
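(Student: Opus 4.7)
The plan is to derive the corollary from Theorem~\ref{th:doubleCriticalContractionCriticalSevenChrom} (the $k = 7$ case) together with the edge-density minor theorems of Győri~\cite{MR652887} and Mader~\cite{MR0229550} already invoked elsewhere in the paper. Let $G$ be a double-critical $k$-chromatic graph with $k \geq 7$. If $k = 7$ the corollary is exactly Theorem~\ref{th:doubleCriticalContractionCriticalSevenChrom}, and if $G = K_k$ the conclusion is immediate. So I may assume $k \geq 8$ and $G$ is non-complete; by Proposition~\ref{prop:minimumDegree}, $\delta(G) \geq k + 1 \geq 9$.

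Next I split on the value of $\delta(G)$. If $\delta(G) \geq 10$, which is automatic whenever $k \geq 9$, then the handshake estimate gives $m(G) \geq 5\,n(G) > 5\,n(G) - 14$, and the Győri--Mader theorem for $K_7$-minors (used in the proof of Theorem~\ref{th:doubleCriticalContractionCriticalSevenChrom}) forces $K_7 \leq G$.

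The only remaining case is $k = 8$ together with $\delta(G) = 9$. I pick a vertex $x$ of degree $9$, so that $n(G_x) = 9$, and use Proposition~\ref{prop:minimumDegreeInGx} to get $\delta(G_x) \geq k - 2 = 6$. Hence
\[
m(G_x) \;\geq\; \tfrac{6 \cdot 9}{2} \;=\; 27 \;=\; 4\,n(G_x) - 9,
\]
and the Győri--Mader theorem for $K_6$-minors (the form used in the proof of Theorem~\ref{th:DoubleCriticalChrom6}) produces a $K_6$-minor in $G_x$. Because $x$ is adjacent in $G$ to every vertex of $V(G_x)$, appending the singleton $\{x\}$ to the six branch sets of that minor as a seventh branch set yields a $K_7$-minor of $G[N[x]] \subseteq G$.

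There is no serious obstacle; the only point demanding a sanity check is that, in the residual $k = 8$, $\delta(G) = 9$ subcase, the density bound $m(G_x) \geq 4\,n(G_x) - 9$ is met, and the arithmetic above confirms that it holds with equality exactly when $n(G_x) = 9$ and $\delta(G_x) = 6$.
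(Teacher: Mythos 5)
Your proof is correct, and for the one genuinely non-trivial subcase ($k=8$ with $\delta(G)=9$) it takes a different, arguably cleaner route than the paper. The paper disposes of that subcase by invoking Proposition~\ref{prop:structureOfGx} --- the complement $\overline{G_x}$ is a disjoint union of isolated vertices and cycles of length $\geq 5$ --- and then runs the same cycle-length case analysis used in Theorems~\ref{th:DoubleCriticalChrom6} and~\ref{th:doubleCriticalContractionCriticalSevenChrom}. You instead note that Proposition~\ref{prop:minimumDegreeInGx} gives $\delta(G_x) \geq k-2 = 6$, hence $m(G_x) \geq \tfrac{6\cdot 9}{2} = 27 = 4\,n(G_x) - 9$, so the Gy\H{o}ri--Mader $K_6$-minor threshold is met on the nose and $G_x \geq K_6$ follows without any case analysis; adjoining $x$ as a seventh branch set then gives $K_7 \leq G$. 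What your shortcut buys is the elimination of the $\ell \in \{5,\dots,9\}$ case split, but it is worth noting why the paper could not use the same trick at $k=6$ or $k=7$: there one gets only $\delta(G_x)\geq 4$ with $n(G_x)=7$ (so $m(G_x)\geq 14 < 19$) or $\delta(G_x)\geq 5$ with $n(G_x)=8$ (so $m(G_x)\geq 20 < 23$), both safely below the Gy\H{o}ri--Mader bound, which is exactly why the structural case analysis is needed there and why the density argument first becomes available --- with equality --- at $k=8$.
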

\begin{proof}
  Let $G$ denote an arbitray double-critical $k$-chromatic graph with
  $k \geq 7$. If $G$ is complete, then we are done. If $k = 7$, then
  the desired result follows from
  Theorem~\ref{th:doubleCriticalContractionCriticalSevenChrom}. If $k
  \geq 9$, then, according to Proposition~\ref{prop:minimumDegree},
  $\delta (G) \geq 10$ and so the desired result follows from a
  theorem of Gy{\H{o}}ri~\cite{MR652887} and Mader~\cite{MR0229550}.
  Suppose $k=8$ and that $G$ is non-complete. Then $\delta (G) \geq 9$.
  If $\delta (G) \geq 10$, then we are done and so we may assume
  $\delta (G) = 9$, say $\deg (x) = 9$. In this case it follows from
  Proposition~\ref{prop:structureOfGx} that the complement
  $\overline{G_x}$ consists of cycles (at least one) and isolated
  vertices (possibly none). An argument similar to the argument given
  in the proof of Theorem~\ref{th:DoubleCriticalChrom6} shows that
  $G_x$ is contractible to $K_6$. Since $x$ dominates every vertex of
  $V(G_x)$, then $G$ itself is contractible to $K_7$.
\end{proof}
The problem of proving that every double-critical $8$-chromatic graph
is contractible to $K_8$ remains open.
\section{Double-edge-critical-  and mixed-double-critical graphs}
\label{sec:mixed}
A natural variation on the theme of double-critical graphs is to
consider double-edge-critical graphs. A vertex-critical graph $G$ is
called \emph{double-edge-critical} if the chromatic number of $G$
decreases by at least two whenever two non-incident edges are removed
from $G$, that is,
\begin{equation}
  \chi (G - e_1 - e_2 ) \leq \chi(G) - 2 \textrm{ for any two non-incident edge } e_1, e_2 \in E(G)
\label{eq:928568376545}
\end{equation}
It is easily seen that $\chi (G - e_1 - e_2 )$ can never be strictly
less that $\chi (G) - 2$ and so we may require $\chi (G - e_1 -
e_2 ) = \chi(G) - 2$ in \eqref{eq:928568376545}. The only critical
$k$-chromatic graphs for $k \in \{1,2 \}$ are $K_1$ and $K_2$,
therefore we assume $k \geq 3$ in the following.
\begin{theorem}
  A graph $G$ is $k$-chromatic double-edge-critical if and only if it
  is the complete $k$-graph.
\label{th:doubleEdgeCritical}
\end{theorem}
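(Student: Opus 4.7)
The plan is to prove both directions. For the easy ``if'' direction, $K_k$ is $k$-chromatic and vertex-critical, and for any two non-incident edges $e_1 = u_1v_1$ and $e_2 = u_2v_2$ of $K_k$ one obtains a proper $(k-2)$-colouring of $K_k - e_1 - e_2$ by giving $\{u_1, v_1\}$ colour $1$, $\{u_2, v_2\}$ colour $2$, and the remaining $k-4$ vertices distinct colours $3, \ldots, k-2$. For $k=3$ the condition is vacuous since any two edges of $K_3$ share a vertex.

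For the ``only if'' direction, let $G$ be $k$-chromatic double-edge-critical. If $|V(G)| = k$ then in any proper $k$-colouring every colour class is a singleton, forcing $G = K_k$, so suppose for contradiction that $|V(G)| > k$ and that some pair $u, v \in V(G)$ is non-adjacent. Since $G$ is vertex-critical, $\delta(G) \geq k-1 \geq 2$, so we may pick $w \in N(u)$ and $z \in N(v) \setminus \{w\}$; because $uv \notin E(G)$ we have $w \neq v$ and $z \neq u$, so $u, v, w, z$ are four distinct vertices and $e_1 := uw$, $e_2 := vz$ are non-incident edges of $G$.

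Pick a proper $(k-2)$-colouring $c$ of $G - e_1 - e_2$. The key observation is that \emph{both} $e_1$ and $e_2$ must be monochromatic under $c$. If neither is monochromatic, then $c$ itself is a proper $(k-2)$-colouring of $G$, contradicting $\chi(G) = k$. If only one, say $e_2$, is monochromatic, then defining $c'(v) := k-1$ and $c'(x) := c(x)$ otherwise produces a proper $(k-1)$-colouring of $G$: every neighbour of $v$ is coloured in $[k-2]$, the edge $e_2$ becomes properly coloured, and $e_1$ is untouched. This again contradicts $\chi(G) = k$, so both edges are monochromatic.

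Now define $c' : V(G) \to [k-1]$ by $c'(u) := c'(v) := k-1$ and $c'(x) := c(x)$ for $x \notin \{u, v\}$. Since $uv \notin E(G)$, assigning $u$ and $v$ the common new colour $k-1$ creates no conflict between them; every other edge incident to $u$ or $v$ goes to a vertex coloured in $[k-2]$; and edges avoiding $\{u, v\}$ are coloured exactly as in $c$ and hence properly. Thus $c'$ is a proper $(k-1)$-colouring of $G$, contradicting $\chi(G) = k$, and so $G = K_k$. The main subtlety is the joint-monochromaticity claim: it is this which lets us recolour both $u$ and $v$ with the same new colour simultaneously, an operation made possible precisely by $uv \notin E(G)$.
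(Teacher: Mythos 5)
Your proof is correct and follows essentially the same route as the paper: choose a non-adjacent pair, extend to two non-incident edges (one incident to each vertex of the pair), observe via the ``both monochromatic or else $G$ is $(k-1)$-colourable'' argument that each of the two deleted edges must be monochromatic under a $(k-2)$-colouring, and then recolour both vertices of the non-adjacent pair with a single fresh colour. The only cosmetic difference is that the paper obtains the four vertices from an induced $3$-path plus an extra neighbour, while you pick them directly from an arbitrary non-adjacent pair; the substance of the argument is identical.
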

\begin{proof}
  It is straightforward to verify that any complete graph is
  double-edge-critical. Conversely, suppose $G$ is a $k$-chromatic ($k
  \geq 3$) double-edge-critical graph. Then $G$ is connected. If $G$
  is a complete graph, then we are done. Suppose $G$ is not a complete
  graph. Then $G$ contains an induced $3$-path $P: wxy$. Since $G$ is
  vertex-critical, $\delta (G) \geq k-1 \geq 2$, and so $y$ is
  adjacent to some vertex $z$ is $V(G)\backslash \{w,x,y \}$. Now the
  edges $wx$ and $yz$ are not incident, and so $\chi (G - wx - yz) = k
  - 2$. Let $\varphi$ denote a $(k-2)$-colouring of $G - wx -
  yz$. Then the vertices $w$ and $x$ (and $y$ and $z$) are assigned
  the same colours, since otherwise $G$ would be
  $(k-1)$-colourable. We may assume that $\varphi$ assigns the colour
  $k-3$ to the vertices $w$ and $x$, and the colour $k-2$ to the
  vertices $y$ and $z$. Now define the $(k-1)$-colouring $\varphi'$
  such that $\varphi'(v) = \varphi(v)$ except $\varphi' (w) = k-1$ and
  $\varphi'(y) = k-1$. The colouring $\varphi'$ is a proper
  $(k-1)$-colouring, since $w$ and $y$ are non-adjacent in $G$. This
  contradicts the fact that $G$ is $k$-chromatic and therefore $G$
  must be a complete graph.
\end{proof}
A vertex-critical $k$-chromatic graph $G$ is called
\emph{mixed-double-critical} if for any vertex $x \in G$ and any edge
$e=uv \in E(G - x)$,
\begin{equation}
  \chi ( G - x - e ) \leq \chi(G) - 2
\label{eq:9276876545}
\end{equation}
\begin{theorem}
  A graph $G$ is $k$-chromatic mixed-double-critical if and only if it
  is the complete $k$-graph.
\label{th:mixed}
\end{theorem}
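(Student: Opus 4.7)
The plan is to follow the proof style of the preceding Theorem~\ref{th:doubleEdgeCritical}. The ``if'' direction is immediate because $K_k - x - uv$ is $K_{k-1}$ minus one edge and hence has chromatic number $k-2$. For the converse I assume $G$ is a $k$-chromatic mixed-double-critical graph with $k \geq 3$ and $G \neq K_k$, and I aim to produce a proper $(k-1)$-colouring of $G$, contradicting $\chi(G) = k$.

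Since $G$ is not complete, some vertex $x$ satisfies $V(G) \setminus N[x] \neq \emptyset$. Because $G$ is connected (being vertex-critical), a shortest path from $x$ to a vertex outside $N[x]$ supplies an edge $uv \in E(G)$ with $u \in N(x)$ and $v \in V(G) \setminus N[x]$. Applying the mixed-double-critical hypothesis to the vertex $x$ and the edge $uv$ (note that $uv \in E(G-x)$ since $x \notin \{u,v\}$), I obtain a proper $(k-2)$-colouring $\varphi$ of $G - x - uv$ using the colours $\{1,\ldots,k-2\}$.

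I then extend $\varphi$ to a proper $(k-1)$-colouring of $G$ in two cases. If $\varphi(u) \neq \varphi(v)$, then $\varphi$ is already a proper colouring of $G - x$, and assigning $\varphi(x) := k-1$ completes a $(k-1)$-colouring of $G$ because the neighbours of $x$ use only colours from $\{1,\ldots,k-2\}$. If $\varphi(u) = \varphi(v)$, then I first recolour $v$ with the new colour $k-1$; this keeps $G - x$ properly coloured, since every neighbour of $v$ still has a colour in $\{1,\ldots,k-2\}$ and in particular $\varphi(u)\neq k-1$. I then set $\varphi(x) := k-1$, which is conflict-free precisely because $v \notin N(x)$, so no neighbour of $x$ was altered and none currently has the colour $k-1$.

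The essential choice in the argument is that of an edge $uv$ with one endpoint in $N(x)$ and the other outside $N[x]$; this choice is what permits the simultaneous use of the new colour $k-1$ on both $x$ and $v$ in the awkward case $\varphi(u)=\varphi(v)$, and the existence of such an edge is guaranteed by the connectivity of $G$. Beyond this observation the argument is a direct analogue of the proof of Theorem~\ref{th:doubleEdgeCritical}, and I do not anticipate any further obstacle.
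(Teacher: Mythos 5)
Your proof is correct and is precisely the ``straightforward and similar'' argument the paper alludes to without writing out: the essential structural observation is to pick the deleted vertex $x$ and the deleted edge $uv$ so that $u \in N(x)$ but $v \notin N[x]$ (which you extract from a shortest path, and which is the exact analogue of the induced $P_3$ used in the proof of Theorem~\ref{th:doubleEdgeCritical}), allowing the new colour $k-1$ to be placed simultaneously on $v$ and $x$ in the case $\varphi(u)=\varphi(v)$. Both cases check out, so this matches the intended proof.
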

The proof of Theorem~\ref{th:mixed} is straightforward and similar to
the proof of Theorem~\ref{th:doubleEdgeCritical}.

\section*{Acknowledgment}
The authors wish to thank Marco Chiarandini and Steffen Elberg
Godskesen for creating computer programs for testing small graphs.
  
\bibliographystyle{plainnat}  
\bibliography{natbibFull}

\end{document}